\theoremstyle{plain}
\newtheorem{thm}{Theorem}
\newtheorem{lem}{Lemma}
\newtheorem{cor}{Corollary}
\newtheorem{prop}{Proposition}
\theoremstyle{definition}
\newtheorem{dfn}{Definition}
\theoremstyle{remark}
\newtheorem{rem}{Remark}
\title{On Iverson's law of similarity}
\author{Eszter Gselmann, Christopher W.~Doble, and Yung-Fong Hsu}
\begin{document}

\maketitle

\begin{abstract}
\cite{Iverson2006b} proposed the law of similarity
\[
\xi_{s}(\lambda x)= \gamma(\lambda, s)\xi_{\eta(\lambda, s)}(x)
\]
for the sensitivity functions $\xi_{s}\, (s\in S)$. Compared to the former models, the generality of this one lies in that here $\gamma$ and $\eta$ can also depend on the variables $\lambda$ and $s$. In the literature, this model (or its special cases) is usually considered together with a given psychophysical representation (e.g. Fechnerian, subtractive, or affine). Our goal, however, is to study at first Iverson's law of similarity on its own. We show that if certain mild assumptions are fulfilled, then $\xi$ can be written in a rather simple form containing only one-variable functions.
The obtained form proves to be very useful when we assume some kind of representation. 

Motivated by \cite{HsuIverson2016}, we then study the above model assuming that the mapping $\eta$ is multiplicatively translational. 
First, we show how these mappings can be characterized. 
Later  we turn to the examination of Falmagne's power law. 
According to our results, the corresponding function $\xi$ can have a Fechnerian representation, and also it can have a subtractive representation. We close the paper with the study of the shift invariance property. 
\end{abstract}


\section{Introduction}

\cite{Iverson2006b} proposed the similarity model
\begin{equation}\label{iver_law}
 \xi_{s}(\lambda x)= \gamma(\lambda, s)\xi_{\eta(\lambda, s)}(x)\,,
\end{equation}
which we call \emph{Iverson's law of similarity}.  As described shortly, this model may be applied to a number of phenomena in psychophysics and experimental psychology.
The meaning of the notation in the model depends on the particular experimental context, examples of which follow.
The purpose of the present work is to further the study of Iverson's law of similarity on theoretical grounds. 
Models in psychology and other empirical sciences are often formalized by equations containing unknown functions, so-called functional equations. Due to this, many results in psychophysical theory can be
obtained through applications of functional equations, see \cite{Aczel1966},  \cite{Aczel1987}, \cite{AczelFalmganeLuce2000}, \cite{Falmagne1985}. 
A seminal example of this is the work in \cite{LuceEdwards1958} in uncovering the possible 'sensory scales' relating physical stimuli with the sensation strengths they elicit (see also  \cite{Iverson2006a}). Another illustrative example is the so-called Plateau's midgray experiment (briefly described below) \cite{Plateau1872}, \cite{Heller2006}.  
Regarding the theory of functional equations, we will primarily rely on J.~Aczél's monographs \cite{Aczel1966} and \cite{Aczel1987} and J.-C.~Falmagne's monograph \cite{Falmagne1985}. In addition, we will use results from the articles \cite{Aczel2005} and \cite{Lundberg1977}. These statements (see Lemmas \ref{lemma_Aczel} and \ref{lem_Lundberg}) are presented at the end of the second section.

The reader may wish to keep in mind the following experimental contexts for Iverson's law of similarity \eqref{iver_law} when considering the functional equation results in this paper.  
In one experimental context,  the participant is asked to judge  which of two stimuli  has the greater sensory impact (i.e., is louder, is heavier, is brighter, etc.).  The value $x$ is a non-negative number representing an intensity, and the value $\xi_{s}(x)$ indicates the intensity that is judged to be greater than intensity $x$ according to the measure of discriminability $s$. Depending on the experimental paradigm used, $s$ may be, for example, a probability, or a value proportional to $d^\prime$ in signal detection theory  \cite{GreenSwets1966}, or some other measure of   likelihood.\footnote{\label{foot_gen} In, for example,  \cite{Iverson2006b}, $s$ has the following meaning. Let  $P(x,y)$ be the probability that intensity $y$ is judged to be greater than intensity $x$.  If such probabilities are assumed to follow a model such as $P(x,y)=F[u(y)-u(x)]$, where $F$ is strictly monotonic, then fixing $P(x,y)=\pi$, we can write that model as $F^{-1}(\pi)= u(y)-u(x)$. But note that with $\pi$ fixed, we have that $y$ in this equation depends on both $x$ and $\pi$, so we write $y=\tilde{\xi_\pi}(x)$, that is, $\tilde{\xi_\pi}(x)$ is the intensity judged greater than $x$ with probability $\pi$. It is convenient to define $F^{-1}(\pi)=s$ and also $\tilde{\xi_\pi}(x)= \xi_s (x)$.}   The value $\lambda$ is a non-negative real number, and the subscript $\eta(\lambda, s)$, like $s$, measures the discriminability between stimuli; the function $\eta$ may depend on $s$ or $\lambda$ (or both).   The multiplier $\gamma(\lambda, s)$ similarly may depend on one or both of $s$ and $\lambda$.  In this experimental context, Iverson's law of similarity generalizes `Weber's law', for which the equation
\[\xi_{s}(\lambda x)=  \lambda \,\xi_s(x)
\]
 holds (\cite{LuceGalanter1963}), and it also generalizes  the `Falmagne's power law', for which 
 \[
 \xi_{s}(\lambda x)=  \lambda^{\phi(s)}\xi_s (x)
 \]
holds. (We use the label `Falmagne's power law' following \cite{Iverson2006b}.)    Falmagne's power law gives the so-called `near-miss to Weber's law' (\cite{McGillGoldberg1968}) when the exponent $\phi(s)$ is close to but different from $1$ for some or all values of $s$  (see also \cite{Doble_et_al2006,Augustin2008}).  
The near-miss to Weber's law model has been applied to data from a number of experimental situations, including pure-tone intensity discrimination \cite{JesteadtWierGreen1977}, 
 \cite{OsmanTzuoTzuo1980},  \cite{Florentine1986}, \cite{Florentineetal1987}, \cite{Doble_et_al2006}, and many others), visual area discrimination  (\cite{AugustinRoscher2008}), and even sugar concentration discrimination in nectar-feeding animals (\cite{Nachev_et_al2013}).  

In another experimental context, an auditory tone is embedded in a broadband noise background.  The participant's task is to match the perceived loudness of this tone/noise pairing with the loudness of an unmasked tone (a tone presented in quiet). This is the partial masking experimental context of \cite{PavelIverson1981}. If we write $\xi_s (x)$  to represent the intensity of the unmasked tone that matches the loudness of a tone of intensity $x$ embedded in a background of intensity $s$, \cite{PavelIverson1981} found that the data suggest the `shift invariance'  relationship 
\[
\xi_{\lambda^\theta s}(\lambda x) = \lambda\, \xi_s (x),
\]
where $x$ and $\lambda$ vary in respective non-negative intervals, and  $\theta$ takes values in the interval $]0, 1[$.  This equation is another specialization of Iverson's law of similarity.  

There are still other experimental contexts for which specializations of Iverson's law of similarity may be applied.  For example, in the midgray experiment by \cite{Plateau1872} alluded to above, participants were asked to paint a gray disk midway between a given white disk and a given black disk.  As described in \cite{Falmagne1985} and \cite{Heller2006}, the data suggest the relationship $\xi_{\lambda s} (\lambda x) = \lambda\, \xi_s (x)$, where $\xi_s (x)$ is the luminance of a disk judged to be midway between a disk of luminance $x$ and another of luminance $s$, and $\lambda$ is a positive number.  Further examples of the application of specializations of  Iverson's law of similarity are described in \cite{HsuIversonDoble2010} and \cite{HsuIverson2016}.
 
Given the above phenomena and their associated specializations of Iverson's law of similarity, there is the natural question of the usefulness of generalizing to equation \eqref{iver_law}; after all, if the specializations of equation \eqref{iver_law} are appropriate models for the phenomena, why would generalizations be of interest?  For the case of the near-miss to Weber's law for pure-tone intensity discrimination, generalizing to equation \eqref{iver_law} is of practical use because models such as 
\[
\xi_{s}(\lambda x)=  \lambda^{\phi(s)}\xi_s (x)
\]
are known to fail at low intensities (\cite{ViemeisterBacon1988}, \cite{WakefieldViemeister1990}, \cite{SchroderViemeisterNelson1994}). More
general candidate models are needed, and the study of equation \eqref{iver_law} can guide experimental work in distinguishing competing models; see  \cite[Page 289]{Iverson2006b}  in this regard.   In fact, deviations from Weber's law are so common across sensory modalities (see, for example, \cite{CarriotCullenChacron2021}) that similar comments may be made for a number of other intensity discrimination contexts beside those involving auditory tones.  Studying generalizations of models that have been successfully applied is a theme of the present work; for instance, in Section 3, we study a generalization of the transformation $\eta (\lambda, s)=\lambda^{-\theta} s$ appearing in the `shift invariance' relationship above, namely, we study the generalization of a `multiplicatively translational' $\eta$, 
\[
\eta(\lambda\tilde{\lambda}, s)= \eta(\tilde{\lambda}, \eta(\lambda, s))\,.
\]

Previously in the study of the possible functional forms in equation \eqref{iver_law} and its specializations, researchers assumed additional representations for the function $\xi_s$.  This approach stems historically from the original work by \cite{Fechner}, but it also has the purpose of helping to narrow down the possible solutions to the functional equations; without further assumptions about the functions $\xi_s$, $\eta$, and $\gamma$, equation \eqref{iver_law} allows for too many solutions for these functions. As examples of studies of (specializations of) equation \eqref{iver_law} in conjunction with additional representations for $\xi_s$, we find \cite{Falmagne1994}, who studied the near-miss model $\xi_{s}(\lambda x)=  \lambda^{\phi(s)}\xi_s (x)$ while also assuming a `subtractive representation' \[s=u(\xi_s(x))-w(x),\] \cite{Iverson2006b}, who studied equation \eqref{iver_law} in the context of a `Fechnerian representation'  
\[
s=u(\xi_s(x))-u(x), 
\]
\cite{FalmagneLundberg2000}, who studied the near-miss model assuming a `gain-control representation' 
\[
s=\frac{u(\xi_s (x))-w(x)}{\sigma(x)}
\]
\cite{IversonPavel1981}, who studied the shift invariance relationship 
\[
\xi_{\lambda^\theta s}(\lambda x) = \lambda\, \xi_s (x)
\]
under a similar gain-control representation; \cite{HsuIverson2016}, who studied the model 
\[
\xi_{s}(\lambda x)=  \lambda^{\phi(s)}\xi_{\eta(\lambda, s)} (x)
\]
under a `a gain-control representation' of the form  
\[s=\frac{u(\xi_s (x))-u(x)}{\sigma(x)}, \] 
and \cite{DobleHsu2020}, who studied equation \eqref{iver_law} assuming a subtractive representation.  As will be seen, in the present paper we examine representations such as these (Fechnerian, subtractive, and gain-control), but we also consider equation \eqref{iver_law} on its own, especially with assumptions about the function $\eta$.

The present work is arranged as follows.  Following \cite{Falmagne1985}, Section 2 contains the most basic concepts such as `psychometric families', `sensitivity functions' and some of their properties (e.g. `anchored', `balanced' and `parallel families'). Here we also present some representations known in psychophysics and their interpretations (e.g. Fechnerian, subtractive, affine, gain-control). Our results can be found in Section 3, which we divide into two subsections. 
First, we study equation \eqref{iver_law}, purely from the point of view of the theory of functional equations. In Remark \ref{form_xi}, we show that if there is at least one $s$ for which the function $\lambda\longmapsto \eta(\lambda, s)$ is invertible, then $\xi$ can written with the aid of one-variable functions in the form $\xi_{s}(x)= \dfrac{\Phi(f(s)x)}{g(s)}$. As a supplement to this, in Theorem \ref{thm_constant} we prove that if there exists a subset $\tilde{S}\subset S$ such that for all $s\in \tilde{S}$, the mapping 
$
 J\ni \lambda \longmapsto \eta(\lambda, s)
$
is constant, then $\xi$ also has a special form, namely 
$
 \xi_{s}(x)= \kappa(s)x^{\rho(s)}. 
$
The form obtained for the function $\xi$ in Remark \ref{form_xi} proves to be very useful when we assume some kind of representation. In line with this, we first describe those functions $\xi$ that have a certain gain-control representation (see Proposition \ref{prop1}), and then we examine equation \eqref{iver_law} again in the case where $\xi$ admits a representation 
\[
 \xi_{s}(x)= u(x)+v(s)
\]
(see Proposition \ref{prop3}). This latter representation is closely related to the so-called   parallel families. In Proposition \ref{prop4}, we provide a complete description of  anchored, balanced and parallel families. {These results are summarized in Part A of the graphical abstract.}

Motivated by \cite{HsuIverson2016}, in the second part of the third section we study \eqref{iver_law} assuming that the mapping $\eta$ is multiplicatively translational. 
First, we show that multiplicatively translational mappings can be written in the form
\[
 \eta(\lambda, s)= H(\lambda \cdot H^{-1}(s))
\]
if certain mild conditions are met, {see Part B of the graphical abstract.} Using this form, we examine what special form the function  $\gamma$ in \eqref{iver_law} can then take. Later we turn to the examination of Falmagne's power law, i.e. 
\[
 \xi_{s}(\lambda x)= \lambda^{\phi(s)}\xi_{\eta(\lambda, s)}(x)
\]
which is an important special case of \eqref{iver_law}. According to Remark \ref{cor1}, 
\[
 \xi_{s}(x)= x^{\phi(s)}F(x\cdot H^{-1}(s))
\]
is satisfied. After that, we show that if the function $\phi$ in the exponent is monotonic, then $\phi$ is necessarily locally constant.
According to Proposition \ref{prop_subtractive}, the corresponding function $\xi$ then has a substractive representation. As an application of the results of the subsection, we close the paper with the study of the shift invariance 
\[
 \xi_{s}(\lambda x)= \gamma(\lambda, s)\xi_{\lambda^{\theta} s}(x). 
\]
{These latter results are summarized in Part C of the graphical abstract.}

\begin{center}
  \includegraphics[height=8cm]{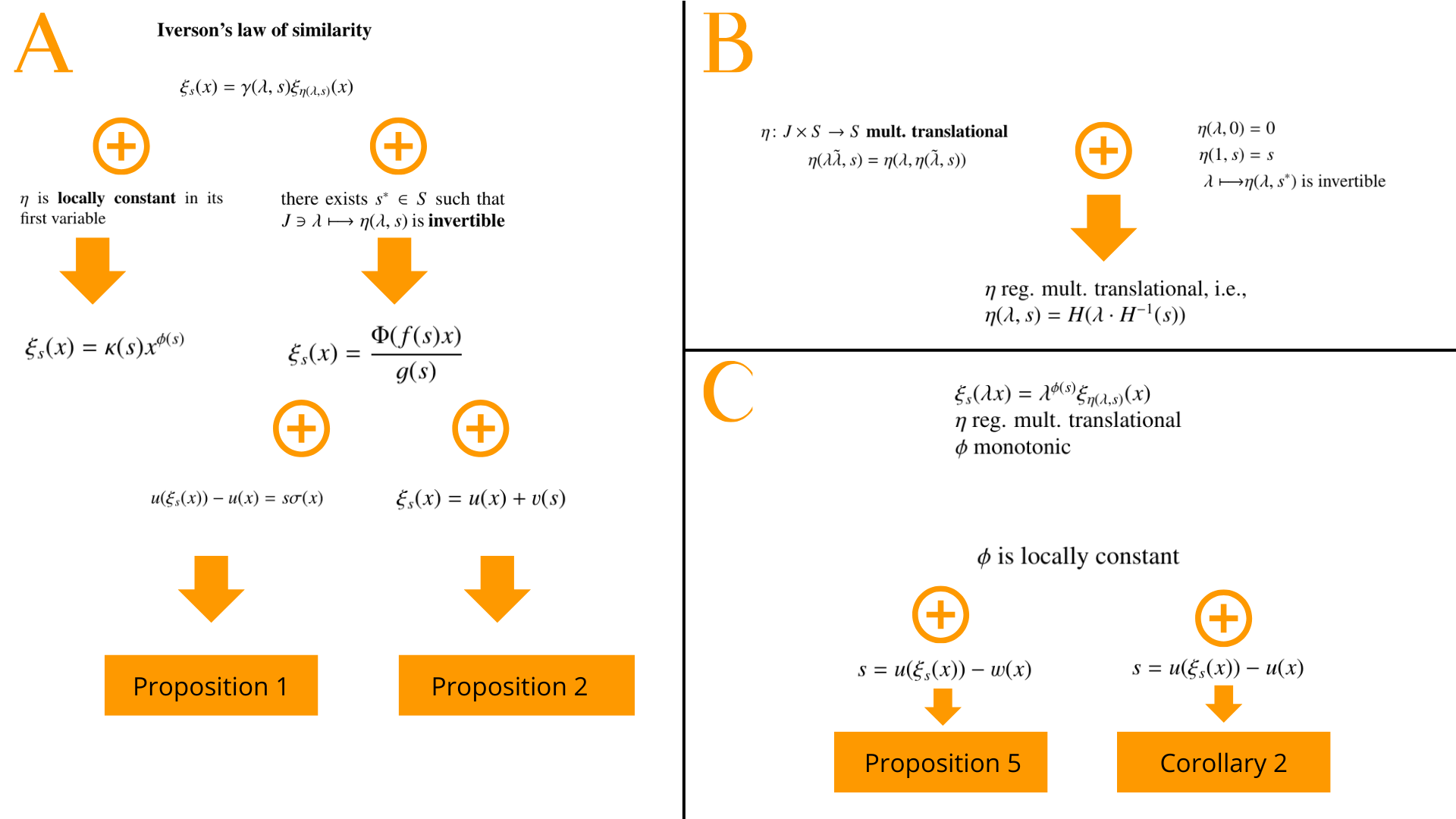}
 \end{center}

\section{Preliminaries}

\cite{Iverson2006b} proposed the similarity model
\begin{equation}\label{iverson_law}
 \xi_{s}(\lambda x)= \gamma(\lambda, s)\xi_{\eta(\lambda, s)}(x)\,,
\end{equation}
which we call \emph{Iverson's law of similarity}.  As described shortly, this model may be applied to a number of phenomena in psychophysics and experimental psychology.
The meaning of the notation in the model depends on the particular experimental context, examples of which follow.
The purpose of the present work is to further the study of Iverson's law of similarity on theoretical grounds. 
Models in psychology and other empirical sciences are often formalized by equations containing unknown functions, so-called functional equations. Due to this, many results in psychophysical theory can be
obtained through applications of functional equations, see \cite{Aczel1966},  \cite{Aczel1987}, \cite{AczelFalmganeLuce2000}, \cite{Falmagne1985}. 
A seminal example of this is the work in \cite{LuceEdwards1958} in uncovering the possible `sensory scales' relating physical stimuli with the sensation strengths they elicit \cite{Iverson2006a}. Another illustrative example is the so-called \cite{Plateau1872}'s midgray experiment (briefly described below) \cite{Heller2006}.  
Regarding the theory of functional equations, we will primarily rely on J.~Aczél's monographs \cite{Aczel1966,Aczel1987} and J.-C.~Falmagne's monograph \cite{Falmagne1985}. In addition, we will use results from the articles \cite{Aczel2005} and \cite{Lundberg1977}. These statements (see Lemmas \ref{lemma_Aczel} and \ref{lem_Lundberg}) are presented at the end of the second section.

The reader may wish to keep in mind the following experimental contexts for Iverson's law of similarity \eqref{iverson_law} when considering the functional equation results in this paper.  
In one experimental context,  the participant is asked to judge  which of two stimuli  has the greater sensory impact (i.e., is louder, is heavier, is brighter, etc.).  The value $x$ is a non-negative number representing an intensity, and the value $\xi_{s}(x)$ indicates the intensity that is judged to be greater than intensity $x$ according to the measure of discriminability $s$. Depending on the experimental paradigm used, $s$ may be, for example, a probability, or a value proportional to $d^\prime$ in signal detection theory  \cite{GreenSwets1966}, or some other measure of   likelihood.\footnote{\label{footnote} In, for example,  \cite{Iverson2006b}, $s$ has the following meaning. Let  $P(x,y)$ be the probability that intensity $y$ is judged to be greater than intensity $x$.  If such probabilities are assumed to follow a model such as $P(x,y)=F[u(y)-u(x)]$, where $F$ is strictly monotonic, then fixing $P(x,y)=\pi$, we can write that model as $F^{-1}(\pi)= u(y)-u(x)$. But note that with $\pi$ fixed, we have that $y$ in this equation depends on both $x$ and $\pi$, so we write $y=\tilde{\xi_\pi}(x)$, that is, $\tilde{\xi_\pi}(x)$ is the intensity judged greater than $x$ with probability $\pi$. It is convenient to define $F^{-1}(\pi)=s$ and also $\tilde{\xi_\pi}(x)= \xi_s (x)$.}   The value $\lambda$ is a non-negative real number, and the subscript $\eta(\lambda, s)$, like $s$, measures the discriminability between stimuli; the function $\eta$ may depend on $s$ or $\lambda$ (or both).   The multiplier $\gamma(\lambda, s)$ similarly may depend on one or both of $s$ and $\lambda$.  In this experimental context, Iverson's law of similarity generalizes `Weber's law', for which the equation
\[\xi_{s}(\lambda x)=  \lambda \,\xi_s(x)
\]
 holds \cite{LuceGalanter1963}, and it also generalizes `Falmagne's power law', for which 
 \[
 \xi_{s}(\lambda x)=  \lambda^{\phi(s)}\xi_s (x)
 \]
holds. (We use the label `Falmagne's power law' following \cite{Iverson2006b}.)    Falmagne's power law gives the so-called `near-miss to Weber's law' \cite{McGillGoldberg1968} when the exponent $\phi(s)$ is close to but different from $1$ for some or all values of $s$  \cite{Doble_et_al2006,Augustin2008}.  
The near-miss to Weber's law model has been applied to data from a number of experimental situations, including pure-tone intensity discrimination \cite{JesteadtWierGreen1977,OsmanTzuoTzuo1980,Florentine1986,Florentineetal1987,Doble_et_al2006}, visual area discrimination \cite{AugustinRoscher2008}, and even sugar concentration discrimination in nectar-feeding animals \cite{Nachev_et_al2013}.  

In another experimental context, an auditory tone is embedded in a broadband noise background.  The participant's task is to match the perceived loudness of this tone/noise pairing with the loudness of an unmasked tone (a tone presented in quiet). This is the partial masking experimental context of \cite{PavelIverson1981}. If we write $\xi_s (x)$  to represent the intensity of the unmasked tone that matches the loudness of a tone of intensity $x$ embedded in a background of intensity $s$, \cite{PavelIverson1981} found that the data suggest the `shift invariance'  relationship 
\[
\xi_{\lambda^\theta s}(\lambda x) = \lambda\, \xi_s (x),
\]
where $x$ and $\lambda$ vary in respective non-negative intervals, and  $\theta$ takes values in the interval $]0, 1[$.  This equation is another specialization of Iverson's law of similarity.  

There are still other experimental contexts for which specializations of Iverson's law of similarity may be applied.  For example, in the midgray experiment by \cite{Plateau1872} alluded to above, participants were asked to paint a gray disk midway between a given white disk and a given black disk.  As described in \cite{Falmagne1985} and \cite{Heller2006}, the data suggest the relationship $\xi_{\lambda s} (\lambda x) = \lambda\, \xi_s (x)$, where $\xi_s (x)$ is the luminance of a disk judged to be midway between a disk of luminance $x$ and another of luminance $s$, and $\lambda$ is a positive number.  Further examples of the application of specializations of  Iverson's law of similarity are described in \cite{HsuIversonDoble2010} and \cite{HsuIverson2016}.
 
Given the above phenomena and their associated specializations of Iverson's law of similarity, there is the natural question of the usefulness of generalizing to equation \eqref{iverson_law}; after all, if the specializations of equation \eqref{iverson_law} are appropriate models for the phenomena, why would generalizations be of interest?  For the case of the near-miss to Weber's law for pure-tone intensity discrimination, generalizing to Equation \eqref{iverson_law} is of practical use because models such as 
\[
\xi_{s}(\lambda x)=  \lambda^{\phi(s)}\xi_s (x)
\]
are known to fail at low intensities \cite{ViemeisterBacon1988,WakefieldViemeister1990,SchroderViemeisterNelson1994}. More
general candidate models are needed, and the study of Equation \eqref{iverson_law} can guide experimental work in distinguishing competing models; see page 289 of \cite{Iverson2006b} in this regard.   In fact, deviations from Weber's law are so common across sensory modalities \cite{CarriotCullenChacron2021} that similar comments may be made for a number of other intensity discrimination contexts beside those involving auditory tones.  Studying generalizations of models that have been successfully applied is a theme of the present work; for instance, in Section 3, we study a generalization of the transformation $\eta (\lambda, s)=\lambda^{-\theta} s$ appearing in the `shift invariance' relationship above, namely, we study the generalization of a `multiplicatively translational' $\eta$, 
\[
\eta(\lambda\tilde{\lambda}, s)= \eta(\tilde{\lambda}, \eta(\lambda, s))\,.
\]

Previously in the study of the possible functional forms in Equation \eqref{iverson_law} and its specializations, researchers assumed additional representations for the function $\xi_s$.  This approach stems historically from the original work by \cite{Fechner}, but it also has the purpose of helping to narrow down the possible solutions to the functional equations; without further assumptions about the functions $\xi_s$, $\eta$, and $\gamma$, Equation \eqref{iverson_law} allows for too many solutions for these functions. As examples of studies of (specializations of) Equation \eqref{iverson_law} in conjunction with additional representations for $\xi_s$, we find \cite{Falmagne1994}, who studied the near-miss model $\xi_{s}(\lambda x)=  \lambda^{\phi(s)}\xi_s (x)$ while also assuming a `subtractive representation' \[s=u(\xi_s(x))-w(x);\] \cite{Iverson2006b}, who studied Equation \eqref{iverson_law} in the context of a `Fechnerian representation'  
\[
s=u(\xi_s(x))-u(x);
\]
\cite{FalmagneLundberg2000}, who studied the near-miss model assuming a `gain-control representation' 
\[
s=\frac{u(\xi_s (x))-w(x)}{\sigma(x)};
\]
\cite{IversonPavel1981}, who studied the shift invariance relationship 
\[
\xi_{\lambda^\theta s}(\lambda x) = \lambda\, \xi_s (x)
\]
under a similar gain-control representation; \cite{HsuIverson2016}, who studied the model 
\[
\xi_{s}(\lambda x)=  \lambda^{\phi(s)}\xi_{\eta(\lambda, s)} (x)
\]
under a `a gain-control representation' of the form  
\[s=\frac{u(\xi_s (x))-u(x)}{\sigma(x)}; \] 
and \cite{DobleHsu2020}, who studied Equation \eqref{iverson_law} assuming a subtractive representation.  As will be seen, in the present paper we examine representations such as these (Fechnerian, subtractive, and gain-control), but we also consider Equation \eqref{iverson_law} on its own, especially with assumptions about the function $\eta$.

The present work is arranged as follows.  Following \cite{Falmagne1985}, Section 2 contains the most basic concepts such as `psychometric families', `sensitivity functions', and some of their properties (e.g. `anchored', `balanced', and `parallel families'). Here we also present some representations known in psychophysics and their interpretations (e.g. Fechnerian, subtractive, affine, gain-control). Our results can be found in Section 3, which we divide into two subsections. 
First, we study Equation \eqref{iverson_law}, purely from the point of view of the theory of functional equations. In Remark \ref{form_xi}, we show that if there is at least one $s$ for which the function $\lambda\longmapsto \eta(\lambda, s)$ is invertible, then $\xi$ can be written with the aid of one-variable functions in the form $\xi_{s}(x)= \dfrac{\Phi(f(s)x)}{g(s)}$. As a supplement to this, in Theorem \ref{thm_constant} we prove that if there exists a subset $\tilde{S}\subset S$ such that for all $s\in \tilde{S}$, the mapping 
$
 J\ni \lambda \longmapsto \eta(\lambda, s)
$
is constant, then $\xi$ also has a special form, namely 
$
 \xi_{s}(x)= \kappa(s)x^{\rho(s)}. 
$
The form obtained for the function $\xi$ in Remark \ref{form_xi} proves to be very useful when we assume some kind of representation. In line with this, we first describe those functions $\xi$ that have a certain gain-control representation (see Proposition \ref{prop1}), and then we examine Equation \eqref{iverson_law} again in the case where $\xi$ admits a representation 
\[
 \xi_{s}(x)= u(x)+v(s)
\]
(see Proposition \ref{prop3}). This latter representation is closely related to the so-called   parallel families. In Proposition \ref{prop4}, we provide a complete description of  anchored, balanced and parallel families. {These results are summarized in Part A of the graphical abstract.}

Motivated by \cite{HsuIverson2016}, in the second part of the third section we study \eqref{iverson_law} assuming that the mapping $\eta$ is multiplicatively translational. 
First, we show that multiplicatively translational mappings can be written in the form
\[
 \eta(\lambda, s)= H(\lambda \cdot H^{-1}(s))
\]
if certain mild conditions are met, {see Part B of the graphical abstract.} Using this form, we examine what special form the function  $\gamma$ in \eqref{iverson_law} can then take. Later we turn to the examination of Falmagne's power law, i.e., 
\[
 \xi_{s}(\lambda x)= \lambda^{\phi(s)}\xi_{\eta(\lambda, s)}(x)
\]
which is an important special case of \eqref{iverson_law}. According to Remark \ref{cor1}, 
\[
 \xi_{s}(x)= x^{\phi(s)}F(x\cdot H^{-1}(s))
\]
is satisfied. After that, we show that if the function $\phi$ in the exponent is monotonic, then $\phi$ cannot be strictly monotonic on any subinterval of positive length. 
According to Proposition \ref{prop_subtractive}, the corresponding function $\xi$ then has a subtractive representation. As an application of the results of the subsection, we close the paper with the study of the shift invariance 
\[
 \xi_{s}(\lambda x)= \gamma(\lambda, s)\xi_{\lambda^{\theta} s}(x). 
\]
{These latter results are summarized in Part C of the graphical abstract.}

\section{Preliminaries}

Our mathematical results are applicable to each of the experimental contexts mentioned in the Introduction, but for concreteness and uniformity, we will use terminology and notation from the intensity discrimination context.  (Other contexts, especially the partial masking context of \cite{PavelIverson1981}, are referred to when especially relevant.) To this end, suppose a participant must compare a stimulus of intensity $x$ with one of intensity $y$ (both measured in ratio scale units\footnote{{For applications of (specializations of) Iverson's law of similarity to contexts in which the stimuli are measured in log-intensity scales, see \cite{DobleFalmagne2003} and \cite{Doble_et_al2006} }}) and judge which has the greater sensory impact (i.e., is louder, is heavier, is brighter, etc.)\footnote{{In fact, for the discrimination task, there are some subtle differences regarding whether one is concerned about the probability of `subjective responses' or is concerned about the probability of `correct responses'. We have presented discussion in an earlier paper \cite{HsuDoble2015}; in that paper, we also provided a mechanism to reconcile the two psychometric functions (pages 165-166).}}.
Let $P(x, y)$ be the probability that intensity $y$ is judged greater than
intensity $x$. A simple model for these probabilities is 
\[
\tag{$\mathscr{F}$}
P(x, y)= F(u(y)-u(x))\,,
\]
in which $u$ and $F$ are continuous and strictly increasing functions\footnote{{In this article, we confine our discussion to Fechnerian models that assume strictly-increasing internal scales. Regarding the issue of subliminality, see, for example, \cite{HsuDoble2015} for a brief discussion (page 159).}}. This model expresses that the stimulus intensities $x$ and $y$ are scaled by the sensory mechanism to the values $u(x)$ and $u(y)$, resp., and the probabilities $P(x, y)$ are determined by the differences $u(y)-u(x)$. Equation $(\mathscr{F})$ is called a \emph{Fechnerian representation}, and $u$ is termed to be a \emph{scale}. The model is closely related to the problem of \cite{Fechner}; for more details, see especially \cite{DzhafarovColonius1999} and \cite{Falmagne1985}.

This representation has been extended and generalized in several ways. There may be asymmetries between the stimuli, e.g.~biases based on the order or position of stimulus presentations can occur. In this case, it is more appropriate to consider the following model
\[
\tag{$\mathscr{S}$}
P(x, y)= F(u(y)-w(x)),
\]
that is called a \emph{subtractive representation}.

A still more general representation is 
\[
P(x, y)= F(u(y)+h(y)g(x)) \;\;\; \text{or} \;\;\; P(x, y)= F(u(y)h(x)+g(x)),
\]
which is called in the literature an \emph{affine representation}; see \cite{HsuIversonDoble2010}. Affine representations include so-called \emph{gain-control representations}, one of which we examine in this paper, namely, the representation
\[
\tag{$\mathscr{G}$}
P(x, y)= F \left(\frac{u(y)-u(x)}{\sigma(x)}\right)\,.
\]
This representation allows for the fact that the sensory mechanism may adjust its ``gain'' via the normalizing factor $\sigma(x)$.  See  \cite{HsuIversonDoble2010} for further description.

%
%
%

For all fixed $x$, introducing the function $p_{x}$ as
\[
p_{x}(y)=P(x, y),
\]
a family of functions $\mathcal{F}= \left\{ p_{x}\, \vert x\in I\right\}$ arises. Due to the properties of the probability, this family has (among others) the following properties
\begin{itemize}
\item for all fixed $x$, the range of the function $p_{x}$ is contained in the interval $]0, 1[$ ;
\item for all fixed $x$, the function $p_{x}$ is strictly increasing and continuous.
\end{itemize}

In what follows, our goal is to determine such families of functions under certain conditions. Therefore, following \cite{Falmagne1985}, we introduce the following notion.

A family of functions $\mathcal{F}= \left\{ p_{x}\, \vert x\in I\right\}$
is called a \emph{psychometric family}, if
\begin{enumerate}[(i)]
\item for all fixed $x$, the domain of the function $p_{x}$ is an open interval\footnote{In this work every interval has positive length.\label{fn1}} ;
\item for all fixed $x$, the range of the function $p_{x}$ is contained in the interval $]0, 1[$ ;
\item for all fixed $x$, the function $p_{x}$ is strictly increasing and continuous.
\end{enumerate}

As we will see, instead of psychometric families, it will be much more convenient to work with sensitivity functions.

Let $\mathcal{F}= \left\{ p_{a}\, \vert \, a\in I\right\}$ be a psychometric family. The \emph{sensitivity function} of $\mathcal{F}$ is a function $\xi$ defined for all backgrounds (or standards) $a$, and all probabilities $\pi$ in the range of a psychometric function $p_a$, by the equation
\begin{equation}\label{Eq_sensitivity}
\xi_{\pi}(a)=p_{a}^{-1}(\pi),
\end{equation}
in other words,
\[
p_{a}\left(\xi_{\pi}(a)\right)= \pi.
\]
In words: $\xi_{\pi}(a)$ is the intensity of the stimulus yielding a response probability $\pi$, for the background $a$. 
As J.-C.\ Falmagne in \cite{Falmagne1985} writes: \emph{``The change of notation, from $p_{a}^{-1}(\pi)$ to $\xi_{\pi}(a)$, symbolizes an important shift of focus in our analysis. The quantity $\pi$, the probability of the response, ceases to be the variable of interest and becomes the parameter.
Typically, at most a couple of values of $\pi$ are considered in experimental plots of Weber functions or sensitivity functions. By contrast, the effect on $\xi_{\pi}(a)$ of the variable $a$ is investigated in minute detail. This is in line with a long tradition in psychophysical research in which the sensory scales uncovered by the analysis of the data are deemed of central importance.''}

In terms of the sensitivity functions, $(\mathscr{F})$, $(\mathscr{S})$, and $(\mathscr{G})$ resp. can be re-formulated as
\[
s=u(\xi_{s}(x))-u(x), 
\]

\[
s=u(\xi_{s}(x))-w(x),
\]
and 
\[
 s\sigma(x)=u(\xi_{s}(x))-u(x).
\]
We note that in each of these re-formulations, the switch from $\pi$ to $s$ occurs through the process described in Footnote \ref{footnote}.  We will use the notation with $s$ in the sequel.

If we do not assume anything about the sensitivity functions, then an infinite number of scale functions can be considered. In this paper we consider in addition a  \emph{similarity law} initiated by G.~J.~Iverson in \cite{Iverson2006b}. In this case, we assume that the sensitivities fulfill identity  
\[
\xi_{s}(\lambda x)= \gamma(\lambda, s)\xi_{\eta(\lambda, s)}(x)
\]
for all possible values of the variables $\lambda, x$ and $s$ and with appropriate two-variable functions $\gamma$ and $\eta$.

In accordance with \cite{Falmagne1985}, a psychometric family $\mathcal{F}= \left\{ p_{a}\, \vert \, a\in I\right\}$ is called \emph{anchored} if there exists a number $\alpha \in ]0, 1[$ such that
\begin{enumerate}[(i)]
\item for all $a\in I$, there exists an $x$ satisfying $p_{a}(x)= \alpha$;
\item for all $x$, there exists an $a\in I$ such that $p_{a}(x)= \alpha$.
\end{enumerate}
The above conditions mean in words that for every background $a$ there is a stimulus $x$, and for every stimulus $x$ there is a background $a$, such that $p_{a}(x)= \alpha$. A number $\alpha \in ]0, 1[$ satisfying these conditions will be called an \emph{anchor} of $\mathcal{F}$.

A psychometric family $\mathcal{F}$ is called \emph{parallel} if for all $p_{a}, p_{b}\in \mathcal{F}$, we have
\[
p_{a}^{-1}(\pi)-p_{a}^{-1}(\pi^{\ast})=
p_{b}^{-1}(\pi)-p_{b}^{-1}(\pi^{\ast})
\]
whenever all four terms are defined.

If a parallel psychometric family $\mathcal{F}= \left\{ p_{a}\, \vert \, a\in I\right\}$ is given, then and only then
\[
\xi_{\pi}(a)-\xi_{\pi^{\ast}}(a)= \xi_{\pi}(b)-\xi_{\pi^{\ast}}(b)
\]
is fulfilled by the sensitivity functions. In view of \cite[Theorem 8.5]{Falmagne1985}, an
anchored psychometric family $\mathcal{F}= \left\{p_{a}\, \vert \, a\in I \right\}$ is parallel if and only if there exist strictly increasing and continuous functions $u$ and $v$ such that
\[
\xi_{s}(x)= u(x)+v(s)
\]
for all possible $x$ and $s$.

Intuitively, a psychometric family is parallel if any two psychometric functions can be made to coincide by a horizontal shift of one toward the other. This suggests that given one psychometric function, say $p_a$, any other psychometric function $p_b$ is completely characterized by the value of one parameter depending on $b$ that is denoted by $g(b)$, expressing the length and direction of the above-mentioned shift ($g(b)$ may be negative).

A psychometric family $\mathcal{F}= \left\{ p_{a}\, \vert \, a\in I\right\}$ is called \emph{balanced} if
\[
p_{a}(b)+p_{b}(a)=1
\]
holds for all possible $a$ and $b$.

The assumption that the psychometric family $\mathcal{F}= \left\{ p_{a}\, \vert \, a\in I\right\}$ is balanced holds if and only if for the sensitivity function we have
\[
\xi_{1-\pi}\left(\xi_{\pi}(a)\right)=a
\]
for all $\pi$ and $a$.

As J.-C.~Falmagne in \cite{Falmagne1985} writes, there are cases (i.e.~the brightness discrimination experiment) where the balance condition naturally holds by the paradigm or of the method of data collection. At the same time, observe that the order of the stimuli in the notation $(a,b)$ suggests the fact that
stimulus $a$ is presented first, followed by stimulus $b$. The balance condition states,
in effect, that the order of presentation does not affect the result of a comparison.
This may not be true, for example, for auditory stimuli presented successively.

These properties may seem natural due to the aforementioned, but we will see later that these notions are far too restrictive, even without Iverson's law of similarity, see Propositions \ref{prop3} and \ref{prop4}.

To prove Proposition \ref{prop1}, the lemma below will be utilized from \cite{Aczel2005}. {Recall that we say that a function is \emph{philandering} if it is not constant on any interval of positive length. }

\begin{lem}\label{lemma_Aczel}
Let {$D\subset ]0, +\infty[^{2}$} be a {nonempty,} open and connected set. Assume that the real-valued functions $f, g, k, h$ fulfill
\[
f(st)= g(s)+h(s)\cdot k(t)
\]
for all $(s, t)\in D$. If the function $f$ is measurable and philandering, then  
\begin{enumerate}[(A)]
\item either
\[
\begin{array}{rcl}
f(r)&=&\beta_{0}\ln(r)+\beta_{1}+\beta_{2}\beta_{3}\\
g(s)&=&\beta_{0}\ln(s)+\beta_{1}\\
h(s)&=&\beta_{3}\\
k(t)&=&\frac{\beta_{0}}{\beta_{3}}\ln(t)+\beta_{2}
\end{array}
\]
\item or
\[
\begin{array}{rcl}
f(r)&=&\beta_{3}\beta_{4}r^{\beta_{0}}+\beta_{1}\\
g(s)&=&\beta_{2}s^{\beta_{0}}+\beta_{1}\\
h(s)&=&\beta_{3}s^{\beta_{0}}\\
k(t)&=&\beta_{4}t^{\beta_{0}}-\frac{\beta_{2}}{\beta_{3}}
\end{array}
\]
\end{enumerate}
for all $s\in \left\{ s\, \vert \, \text {there exists $t$ such that $(s, t)\in D$} \right\}$, \\
$t\in \left\{ t\, \vert \, \text {there exists $s$ such  that $(s, t)\in D$} \right\}$ and
$r\in \left\{ st \, \vert \, (s, t)\in D\right\}$, where $\beta_{i}$ are real constants for $i=1, 2, 3, 4$ such that $\beta_{0}, \beta_{3}, \beta_{4}\neq 0$.
\end{lem}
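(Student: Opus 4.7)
The plan is to convert the multiplicative equation to the additive form $F(u+v)=G(u)+H(u)K(v)$ via the substitution $u=\ln s$, $v=\ln t$, with $F(w)=f(e^{w})$, $G(u)=g(e^{u})$, $H(u)=h(e^{u})$, $K(v)=k(e^{v})$, on the open connected set $E=\{(u,v):(e^{u},e^{v})\in D\}\subset\mathbb{R}^{2}$. The philandering and measurability assumptions on $f$ transfer to $F$. A preliminary observation is that $K$ cannot be constant on any subinterval: otherwise, fixing an admissible $u$ would make $v\mapsto F(u+v)=G(u)+H(u)K(v)$ constant on an interval, violating the philandering property of $F$. Similarly, $H$ cannot vanish on any open set. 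So one can pick $u_{0}, v_{0}, v_{1}$ in some small open rectangle contained in $E$ with $H(u_{0})\neq 0$ and $K(v_{0})\neq K(v_{1})$.

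The decisive step is to take differences. Subtracting the equation at $v_{1}$ and $v_{0}$ gives
\[
F(u+v_{1})-F(u+v_{0})=H(u)\bigl[K(v_{1})-K(v_{0})\bigr],
\]
which expresses $H$ as a measurable function of $F$; plugging back in determines $G$ similarly. The general equation then becomes
\[
F(u+v)-F(u+v_{0})=H(u)\bigl[K(v)-K(v_{0})\bigr],
\]
and comparing this at $u$ and $u_{0}$ yields the separable identity
\[
\bigl[F(u+v)-F(u+v_{0})\bigr]\cdot H(u_{0}) = \bigl[F(u_{0}+v)-F(u_{0}+v_{0})\bigr]\cdot H(u).
\]
Fixing a small $\delta$ and setting $P(w)=F(w+\delta)-F(w)$ transforms these identities into a Cauchy-type relation $Q(u+v)=Q(u)Q(v)$ for a suitable measurable $Q$ built from translates of $F$. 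Because $Q$ is measurable, the standard Cauchy exponential theorem forces $Q(w)=e^{\beta_{0}w}$ for some $\beta_{0}\in\mathbb{R}$.

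Splitting on the value of $\beta_{0}$ produces the two branches. If $\beta_{0}=0$, then the increment $F(w+\delta)-F(w)$ depends only on $\delta$, and a measurable additive function is linear, so $F$ is affine and $f(r)=\beta_{0}\ln r+\text{const}$ after unwinding the substitution, which is branch (A). If $\beta_{0}\neq 0$, the increment relation $F(w+\delta)-F(w)=C(\delta)\,e^{\beta_{0}w}$ with $C(\delta)=F(\delta)-F(0)$ integrates to $F(w)=Ae^{\beta_{0}w}+B$, yielding $f(r)=Ar^{\beta_{0}}+B$, which is branch (B). In each case, plugging the determined $f$ back into $f(st)=g(s)+h(s)k(t)$ and separating variables fixes the shapes of $g$, $h$, $k$ up to the free constants $\beta_{1},\beta_{2},\beta_{3},\beta_{4}$, with the non-degeneracy $\beta_{0},\beta_{3},\beta_{4}\neq 0$ arising from philandering of $f$ together with the non-triviality of $H$ and $K$.

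The hardest part will be the generality of the domain: since $D$ is only open and connected rather than a product of intervals, the pairs $(u,v_{i})$ and $(u_{0},v_{j})$ cannot be chosen uniformly at will over all of $E$. My remedy is to solve the equation first on a small open rectangle inside $E$, obtaining one of the two parameterized forms there, and then to extend the form to all of $E$ by propagating the algebraic identities along chains of overlapping rectangles, using connectedness of $E$ together with the rigidity of the two candidate families (two distinct parameterized forms from the list cannot agree on a non-empty open subset). Verifying that the explicit expressions for $G$, $H$, and $K$ in terms of $F$ are well-defined and measurable on the claimed domains is then routine.
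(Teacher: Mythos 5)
The paper does not prove this lemma at all: it is quoted as a known result from Acz\'el (2005) (``Extension of a generalized Pexider equation'', \emph{Proc.\ Amer.\ Math.\ Soc.}), and is used as a black box in Propositions \ref{prop1} and \ref{prop3}. So there is no in-paper argument to compare yours against; what can be said is that your sketch is a faithful reconstruction of the standard proof strategy, and it is essentially the route Acz\'el's paper takes: pass to additive variables, difference out $G$, obtain the separable identity $\bigl[F(u+v)-F(u+v_0)\bigr]H(u_0)=\bigl[F(u_0+v)-F(u_0+v_0)\bigr]H(u)$, reduce the increments of $F$ to a (restricted-domain) multiplicative Cauchy equation, invoke measurability, and then propagate from rectangles to the whole open connected domain. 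Your preliminary observations ($K$ non-constant on subintervals, $H$ nowhere zero, both forced by philandering of $F$) are correct and necessary, and your two branches $\beta_0=0$ versus $\beta_0\neq 0$ do correspond exactly to alternatives (A) and (B).

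Two places deserve more care than your sketch gives them. First, the passage from the rectangle identity to $Q(u+v)=Q(u)Q(v)$ and thence to $Q(w)=e^{\beta_0 w}$ is a Cauchy equation \emph{on a restricted domain}; one must check positivity and non-vanishing of $Q$ near the base point and invoke a local extension theorem, and one must also note that the exponent $\beta_0$ is independent of the auxiliary shift $\delta$ (it is pinned down by $H$ alone, since $H(u)$ is a fixed multiple of $\Delta_\delta F(u+v_*)$ for every admissible $\delta$); relatedly, the identity $C(\delta)=F(\delta)-F(0)$ is not literally available since $0$ need not lie in the domain --- what one actually solves is the cocycle relation $C(\delta+\delta')=C(\delta)+C(\delta')e^{\beta_0\delta}$, giving $C(\delta)=A(e^{\beta_0\delta}-1)$ and hence $F(w)=Ae^{\beta_0 w}+B$. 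Second, the chaining argument over overlapping rectangles is not a routine afterthought: it is precisely the ``extension'' in the title of Acz\'el's paper. Your rigidity observation (two distinct members of the parameterized families cannot agree on an interval, so the branch and the constants propagate along a chain covering any path in the connected set) is the right idea and does close the argument, but it is the part that should be written out in full rather than asserted. With those points supplied, your proof is correct.
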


Further, in the next section we also use a result of \cite{Lundberg1977} on the functional equation 
\[
 f(\ell(y)+g(s))= m(y)+ h(y+s) 
\]
which is the statement below.

\begin{lem}\label{lem_Lundberg}
Suppose the equation 
\begin{equation}\label{lund_thm_eq}
f(\ell(x) +g(y))=m(x)+h(x+y)
\end{equation}
holds for all $(x, y)$ in a body in $\mathbb{R}^2$, where $f$, $g$, $h$, $\ell$ and $m$ are real-valued, continuous functions, each defined on an interval (possibly different intervals for different functions).  Suppose also that $h$, $\ell$ and $m$  are philandering.  Then the solutions are exactly those given in Cases I-V below, where the parameters can take any real values as long as the resulting expressions are real numbers.  
\begin{enumerate}[{Case }I:]
\item 
\begin{align*}
f(x)&=\alpha + \rho x\\
g(x)&=\beta + b x\\
h(x)&=-\tau + \rho b x\\
\ell(x)& \text{ is arbitrary } \\
m(x)&=\rho \ell(x)- \rho b x+\alpha+\rho\beta+\tau
\end{align*}
\item 
\begin{align*}
f(x)&=\alpha + \rho \log (c+e^{\kappa x})\\
g(x)&= \frac{1}{\kappa}\log(-\beta c + d e^{\delta x})\\
h(x)&= -\tau + \alpha + \rho \log(b c + d e^{\delta x})\\
\ell(x) &=-\frac{1}{\kappa}\log(\beta + b e^{-\delta x})\\
m(x)&=\tau - \rho \log(b + \beta e^{\delta x})
\end{align*}
\item 
\begin{align*}
f(x)&= \rho \log (\alpha - be^{\kappa x})\\
g(x)&= \frac{1}{\kappa}\log(\beta - d \alpha x)\\
h(x)&=  -\tau +  \rho \log(b d \alpha x + \alpha \epsilon - b \beta)\\
\ell(x)& = -\frac{1}{\kappa}\log(\epsilon + b d x)\\
m(x)&= \tau - \rho \log(\epsilon + b d x)
\end{align*}
\item 
\begin{align*}
f(x)&= \alpha + \rho e^{\kappa x}\\
g(x)&= \beta + \frac{1}{\kappa}\log(b + c e^{\delta x})\\
h(x)&=  -\tau + \alpha + \rho c e^{\delta x}\\
\ell(x) &=-\beta + \frac{\delta}{\kappa}x\\
m(x)&= \tau + \rho b e^{\delta x}
\end{align*}
\item 
\begin{align*}
f(x)&= \alpha + \frac{\rho}{\delta} \log (\beta + x)\\
g(x)&= -\beta - \epsilon + ce^{\delta x}\\
h(x)&=  -\tau + \alpha + \frac{\rho}{\delta} \log (b+c e^{\delta x})\\
\ell(x) &=  \epsilon + b e^{-\delta x}\\
m(x)&= \tau - \rho x
\end{align*}
\end{enumerate}
\end{lem}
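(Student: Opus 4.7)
The plan is a three-stage argument in the classical style used for Aczél-type functional equations: first reduce the number of unknown functions by strategic substitutions, then upgrade regularity from mere continuity to smoothness on appropriate subintervals, and finally solve the resulting system of ordinary differential equations and enumerate the solution families. Throughout, the philandering hypothesis on $h$, $\ell$, and $m$ will play the decisive role, since a continuous philandering function is monotonic (hence locally invertible) on some open subinterval of its domain.

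\emph{Reduction.} Fixing $y = y_0$ in the equation gives $f(\ell(x) + g(y_0)) = m(x) + h(x + y_0)$, and fixing $x = x_0$ gives $f(\ell(x_0) + g(y)) = m(x_0) + h(x_0 + y)$. Using the local monotonicity of $\ell$ and $h$, one obtains from the second identity a local expression for $f$ as a translate-composition of $h$ with a local inverse of $g$; note that $g$ must also be philandering, since constancy of $g$ on a subinterval would force the same for $h$ via the original equation, contradicting the hypothesis. Substituting the obtained expression for $f$ back into the original equation eliminates $f$ and leaves a Pexider-type relation among $g$, $h$, $\ell$, and $m$.

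\emph{Regularity and the ODE system.} Continuity of the unknowns together with the composite structure of the reduced equation permits a standard smoothing argument (integrating the equation over a small subinterval in one free variable yields a $C^1$ version, and iterating produces $C^\infty$ regularity on suitable subintervals). Differentiating the original equation separately with respect to $x$ and to $y$ and eliminating $f'$ then produces the master identity
\[
 h'(x+y)\bigl[\ell'(x) - g'(y)\bigr] = m'(x)\, g'(y).
\]
The left-hand side depends only on $x+y$, while the right-hand side has product structure in $x$ and $y$; separating variables in the logarithmic derivatives yields a small finite list of ODEs governing $h'$, $g'$, $\ell'$, and $m'$. The natural dichotomies that arise -- whether $\ell'$ is constant or not, whether $g'$ vanishes somewhere, and whether $h'$ is of exponential, power, or purely linear type -- partition the solution set into precisely the five regimes displayed as Cases I--V, and one integrates back in each regime to recover the listed closed forms (absorbing the multiple constants of integration into the displayed parameters).

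The hardest part will be the case analysis in the third stage: verifying that Cases I--V are exhaustive requires careful handling of the degenerate subcases where one of the derivatives vanishes on a subinterval, or where two of the logarithmic scales coincide, and checking that local solutions pieced together on different subintervals glue into solutions defined on the entire body in $\mathbb{R}^2$ without losing continuity. The philandering hypothesis is precisely what excludes the pathological configurations that would otherwise enlarge the classification beyond the five displayed families.
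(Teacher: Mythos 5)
The paper does not prove this lemma at all: it is quoted verbatim from Lundberg (1977) as an external tool (``we also use a result of Lundberg (1977) \dots which is the statement below''), so there is no internal proof to compare against and your task here is really to reprove Lundberg's classification theorem from scratch.

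Judged on its own terms, your plan has a genuine gap at its foundation. You assert that ``a continuous philandering function is monotonic (hence locally invertible) on some open subinterval of its domain,'' and you lean on this in both the reduction stage (to form a local inverse of $g$ and write $f$ as $h$ composed with that inverse) and the regularity stage (the integration/smoothing trick needs a change of variables $t=g(y)$, hence local injectivity of $g$). This claim is false: a continuous nowhere-monotone function (e.g.\ of Weierstrass type) is not constant on any interval of positive length, hence philandering, yet is monotone on no subinterval and admits no local inverse anywhere. Your observation that $g$ must be philandering (since constancy of $g$ on an interval would force constancy of $h$ on an interval via the equation) is correct, but it does not deliver the local invertibility you need, and nothing in your sketch derives monotonicity of $g$ or $\ell$ from the equation itself. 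Since every subsequent step --- eliminating $f$, upgrading to $C^\infty$, differentiating to reach the identity $h'(x+y)\bigl[\ell'(x)-g'(y)\bigr]=m'(x)\,g'(y)$ --- presupposes this invertibility and differentiability, the argument does not get off the ground as written; you would first have to prove, from the functional equation, that the relevant functions are strictly monotone (or otherwise injective) on suitable subintervals.

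A second, independent deficiency is that the actual content of the theorem --- that Cases I--V are \emph{exactly} the solutions, with the specific parametrized closed forms displayed --- is only asserted. Even granting the master ODE identity, the separation-of-variables step and the exhaustive enumeration of regimes (including the degenerate subcases you flag, and the verification that each listed family really solves the original equation on a body) constitute essentially all of the work of Lundberg's paper, and none of it is carried out. As a proof of the lemma, the proposal is therefore incomplete even where it is not incorrect.
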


We end this section with some short comments.  Recall from Footnote \ref{fn1} that all intervals in this paper have positive length.  Another is that in several results below, we assume that the value $s=0$ is attainable.  Note that $s=0$ is feasible in all of the experimental contexts mentioned in the Introduction. In the case of intensity discrimination experiments, the index $s=0$ may be assumed to correspond to the probability of $1/2$ (or to the $d'$ value of $0$), and for other experiments such as partial masking, the intensity $s=0$ would also make sense for the model.  The third  short comment is that even though we are using terminology adopted from an intensity discrimination context, we will continue to use that terminology when referring to other experimental contexts.  For example, we use the term `sensitivity function' even in a partial masking context.

\section{Results}

\subsection{Iverson's law of similarity}

According to our primary aim, now we study Iverson's law of similarity, i.e., identity
\begin{equation}\label{Eq_Iverson}
\xi_{s}(\lambda x)= \gamma(\lambda , s)\xi_{\eta(\lambda, s)}(x)
\qquad
\left(x\in I, \lambda\in J, s\in S\right). 
\end{equation}
In the following statement, we show that if there exists an $s^{\ast}\in S$ such that the mapping $\lambda \longmapsto \eta(\lambda, s^{\ast})$ is invertible, then the  function $\xi$ can be expressed with the aid of one-variable functions in a rather special form.

Before this, however, we would like to clarify under which conditions we will be able to prove this statement. Additionally, we would like to compare our assumptions here with the standard assumptions found in the literature. In what follows (until stated otherwise), $\xi$ will always denote a one-parameter family that is defined on the Cartesian product $I\times S$ of some real intervals $I$ and $S$. This means that we do not assume that $\xi$ is a sensitivity function, which among other things would imply that for all fixed $s$, the mapping $\xi_{s}$ is continuous and strictly increasing. Further, the domain of the two-variable function $\gamma$ and $\eta$ is supposed to be $J\times S$. In the case of the function $\gamma$, there is no assumption on the range, while in the case of the function $\eta$ we assume that it maps $J\times S$ into $S$. This latter assumption is justified by Equation \eqref{Eq_Iverson} itself. It is important to emphasize that apart from these, we do not impose any a priori regularity assumptions on the involved functions, but in several cases, we get them as a consequence. Finally, the assumption $I\cdot J\subset I$ helps Equation \eqref{Eq_Iverson} to hold for all $x\in I$, $\lambda\in J$ and $s\in S$. Thus, from one point of view, in this paper we will work under more general conditions, since in most papers the authors assume that the functions $\gamma$ and $\eta$ are continuous in each of their variables. On the other hand, the domain of $\xi$ is assumed to be $I\times S$, which, compared to other works, is a bit more special.  As an example, we mention that in \cite{DobleHsu2020}, $s$ is taken from an open interval $S$; for all $s\in S$, the mapping $\xi_{s}$ is defined on an open interval $I_{s}\subset ]0, +\infty[$ such that the set
\[
B= \left\{(x, s)\, \vert \, s\in S, x\in I_{s}\right\} \subset \mathbb{R}^{2}
\]
is an open and connected set and Equation \eqref{Eq_Iverson} is supposed to hold for all triples $(x, \lambda, s)$ such that $\lambda \in J$, $(x, s)\in B$ and $(\lambda x, s)\in B$.

\begin{rem}\label{form_xi}
Let $I, J, S\subset \mathbb{R}$ be  intervals with $I\cdot J= \left\{ \lambda x\, \vert \, x\in I, \lambda \in J \right\}\subset I$. Let further $\gamma\colon J\times S\to \mathbb{R}$ and $\eta \colon J\times S\to S$ be functions and assume that the one-parameter family of functions $\xi_{s}\colon I\to \mathbb{R}\, (s\in S)$ fulfills
\[
\xi_{s}(\lambda x)= \gamma(\lambda , s)\xi_{\eta(\lambda, s)}(x)
\qquad
\left(x\in I, \lambda\in J, s\in S\right).
\]
If there exists an $s^{\ast}\in S$ such that the mapping $\lambda \longmapsto \eta(\lambda, s^{\ast})$ is invertible, then there exist functions $f\colon \tilde{S}\to J$, $g\colon \tilde{S}\to \mathbb{R}$ and $\Phi \colon I\to \mathbb{R}$ such that
\[
g(s) \cdot \xi_{s}(x)= \Phi(f(s)\cdot x)
\qquad
\left(x\in I, s\in \tilde{S}\right). 
\]
Here $\tilde{S}$ denotes the range of the mapping $\lambda \longmapsto \eta(\lambda, s^{\ast})$, that is, $\tilde{S}= \eta(J, s^{\ast})$. 
Indeed, by our assumptions, there exists an $s^{\ast}\in J$ such that the mapping $\lambda \longmapsto \eta(\lambda, s^{\ast})$ is invertible. Then the inverse of this mapping is defined on the range of $\lambda \longmapsto \eta(\lambda, s^{\ast})$, that is, on $\tilde{S}= \eta(J, s^{\ast})$. Therefore, the mapping 
\[
 J\ni \lambda \longmapsto \eta(\lambda, s^{\ast})\in \tilde{S}
\]
is a \emph{bijection} between the sets $J$ and $\tilde{S}$. In other words, for all $s\in \tilde{S}$, there exists a uniquely determined $\lambda \in J$ such that $\eta(\lambda, s^{\ast})= s$, or equivalently $\lambda= \eta^{-1}(s, s^{\ast})$. 

The above identity with $s^{\ast}$ instead of $s$ is just
\[
\tag{$\ast$}
\xi_{s^{\ast}}(\lambda x)= \gamma(\lambda , s^{\ast})\xi_{\eta(\lambda, s^{\ast})}(x)
\qquad
\left(x\in I, \lambda \in J\right).
\]
Let now $s\in \tilde{S}$ be arbitrary. 
From this, with the substitution $s= \eta^{-1}(\lambda, s^{\ast})$ we get that
\[
\xi_{s^{\ast}}(\eta^{-1}(s, s^{\ast}) x)= \gamma(\eta^{-1}(s, s^{\ast}) , s^{\ast})\xi_{s}(x)
\qquad
\left(x\in I, s \in \tilde{S}\right).
\]

Let now
\[
f(s)=\eta^{-1}(s, s^{\ast})
\qquad
g(s)= \gamma(\eta^{-1}(s, s^{\ast}) , s^{\ast})
\]
and
\[
\Phi(x)= \xi_{s^{*}}(x)
\qquad
\left(x\in I, s\in \tilde{S}\right)
\]
to deduce the statement.
\end{rem}

\begin{rem}
Using the notations and the assumptions of Remark \ref{form_xi}, if the one-parameter family of functions $\xi_{s}\colon I\to \mathbb{R}\, (s\in J)$ fulfills
\[
\xi_{s}(\lambda x)= \gamma(\lambda , s)\xi_{\eta(\lambda, s)}(x)
\qquad
\left(x\in I, \lambda\in J\setminus \left\{0\right\}, s\in \tilde{S}\right),
\]
then $\gamma(\lambda^{\ast}, s)= 0$ implies that $\xi_{s}(x)=0$ on some subinterval of $I$.  
Indeed, let $s\in \tilde{S}$ be fixed and assume that there exists $\lambda^{\ast}\in J\setminus \left\{0\right\}$ such that $\gamma(\lambda^{\ast}, s)=0$, then
\[
\xi_{s}(\lambda^{\ast} x)= \gamma(\lambda^{\ast}, s)\xi_{\eta(\lambda^{\ast}, s)}\left(x\right)=0
\]
for all $x\in I$, yielding that $\xi_{s}$ is identically zero on the interval $\lambda^{\ast}I= \left\{\lambda^{\ast}x \, \vert \, x\in I \right\}$.

In some contexts, it is assumed that $\xi_{s}(x)>0$ 
for all possible $s$ and all possible $x$.
Note that in such a case, it follows from Remark \ref{form_xi} that we have
\[
\xi_{s}(x)= \frac{\Phi(f(s)x)}{g(s)}
\]
for all $x\in I$ and $s\in \tilde{S}$.
\end{rem}

\begin{rem}
The assumption that there exists $s^{\ast}\in S$ such that the mapping
\[
J\ni \lambda \longmapsto \eta(\lambda, s^{\ast})
\]
is invertible, is essential while proving Remark \ref{form_xi}. However, from the point of view of applications, this condition may seem artificial. Instead of invertibility, for example, monotonicity is a much more reasonable assumption. Note, however, that then the following alternative is fulfilled. Accordingly, suppose that there exists an $s^{\ast}\in S$ such that the mapping $\lambda \longmapsto \eta(\lambda, s^{\ast})$ is monotonic on the interval $J$. Then the points of discontinuity of this function form a countable set. These points of discontinuity induce a (countable) disjoint partition $(J_{\alpha})_{\alpha \in A}$ of the interval $J$. Then for all $\alpha \in A$ the mapping
\[
J^{\circ}_{\alpha}\ni \lambda \longmapsto \eta(\lambda, s^{\ast})
\]
is continuous (and monotonic), where $J^{\circ}_{\alpha}$ denotes the interior of the interval $J_{\alpha}$. Then there are subintervals $J_{\alpha, c}, J_{\alpha, s}\subset J^{\circ}_{\alpha}$ such that the above mapping is constant on $J_{\alpha, c}$ and strictly monotonic on $J_{\alpha, s}$. The mapping
\[
\lambda \longmapsto \eta(\lambda, s^{\ast})
\]
while restricted to $J_{\alpha, s}$ is invertible. Thus the assumptions of Remark \ref{form_xi} are fulfilled on $J_{\alpha, s}$.
 Further, the same mapping, while restricted to $J_{\alpha, c}$, is a constant mapping.  
 This motivates the following theorem, in which Iverson's law of similarity is examined in the case where the mapping $\eta$ is constant for certain values $s$. 
 Notice that in this statement, in contrast to Remark \ref{form_xi}, we only need to require the minimum about the domains of the involved functions.
\end{rem}

{
\begin{thm}\label{thm_constant}
 Let $S$ and $J$ be intervals and suppose that for all $s\in S$, we are given an interval $I_{s}\subset ]0, +\infty[$ and let 
 \[
  B= \left\{(x, s)\, \vert \, s\in S, x\in I_{s}\right\}\subset \mathbb{R}^{2}. 
 \]
Suppose further that 
\begin{enumerate}[(i)]
\item there exists $s^{\ast}\in S$  such that the mapping 
\[
 J\ni \lambda \longmapsto \eta(\lambda, s^*)
\]
is constant, while the mapping 
\[
 I_{s^{*}}\ni x\longmapsto \xi_{s^*}(x)
\]
is philandering and measurable;
\item we have 
{
\[
 \xi_{s^{*}}(\lambda x)= \gamma(\lambda, s^{*})\xi_{\eta(\lambda, s^{*})}(x)
\]
}
for all $\lambda\in J$ and $(x, s^*)\in B$ for which also $(\lambda x, s^*)\in B$;
\item the set 
\[
\left\{ (\lambda, x) \, \vert \, \lambda\in J, (x, s^*)\in B  \text{ for which also } (\lambda x, s^*)\in B\right\} \subset ]0, +\infty[^{2}
\]
is open and connected. 
\end{enumerate}
Then there exist constants  $\kappa(s^*)$ and $\rho(s^*)$ such that 
\[
 \xi_{s^*}(x)= \kappa(s^*)x^{\rho(s^*)} 
 \qquad 
 \left(x\in I_{s^*}\right). 
\]
\end{thm}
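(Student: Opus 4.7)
The plan is to recognize Iverson's law of similarity at $s=s^{*}$ as an instance of the Pexider-type equation treated in Lemma \ref{lemma_Aczel}, after using the constancy of $\eta(\cdot, s^{*})$ to eliminate the $\lambda$-dependence carried by the subscript on $\xi$. Concretely, by hypothesis (i) there is some $c\in S$ with $\eta(\lambda, s^{*})=c$ for every $\lambda\in J$, so hypothesis (ii) becomes
\[
\xi_{s^{*}}(\lambda x)=\gamma(\lambda, s^{*})\,\xi_{c}(x),
\]
to hold on the open, connected set $D$ described in (iii). Setting $f:=\xi_{s^{*}}$, $g\equiv 0$, $h(\lambda):=\gamma(\lambda, s^{*})$, and $k(x):=\xi_{c}(x)$, this is exactly $f(st)=g(s)+h(s)k(t)$, and the remaining hypotheses of Lemma \ref{lemma_Aczel} (measurability and philandering of $f$, openness and connectedness of $D$) are supplied by (i) and (iii).

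Next I would run the trivial additive term $g\equiv 0$ through the two alternatives produced by the lemma. In case (A), the formula $g(s)=\beta_{0}\ln s+\beta_{1}$ must vanish identically on an open subinterval of $J$, forcing $\beta_{0}=\beta_{1}=0$; but then $f(r)=\beta_{2}\beta_{3}$ is constant on the product projection of $D$, which is a nondegenerate interval in $I_{s^{*}}$, contradicting the philandering assumption on $\xi_{s^{*}}$. Only case (B) survives, and there $g(s)=\beta_{2}s^{\beta_{0}}+\beta_{1}\equiv 0$ forces $\beta_{1}=\beta_{2}=0$. Reading off what remains yields
\[
\xi_{s^{*}}(r)=\beta_{3}\beta_{4}\,r^{\beta_{0}},
\]
and setting $\kappa(s^{*}):=\beta_{3}\beta_{4}$ and $\rho(s^{*}):=\beta_{0}$ produces the asserted form.

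The step I expect to require the most care, though I do not foresee a serious obstacle, is upgrading this identity from the product projection $\{\lambda x : (\lambda, x)\in D\}$ to the full interval $I_{s^{*}}$ demanded in the conclusion. Since $D$ is open and connected, that projection is at least an open subinterval of $I_{s^{*}}$, so the power-law formula is already known on an interval; propagating it to the rest of $I_{s^{*}}$ follows by reinserting the now-known power-function forms of $\gamma(\cdot, s^{*})$ and $\xi_{c}$ (also pinned down by case (B) of Lemma \ref{lemma_Aczel}) back into the functional equation. The constancy of $\eta(\cdot, s^{*})$ is what makes this propagation harmless: each reuse stays at the same subscript $s^{*}$, so no new regime is entered. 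All the substantive work is thereby done by Aczél's lemma, and my role is essentially to engineer the application and discard case (A).
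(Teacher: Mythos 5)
Your proposal is correct and follows essentially the same route as the paper: the constancy of $\eta(\cdot,s^{*})$ turns the similarity law at $s^{*}$ into a multiplicative Pexider equation on the open connected set of hypothesis (iii), and Lemma \ref{lemma_Aczel} (which the paper invokes directly, via Corollary 6 of Acz\'el (2005), for the multiplicative case rather than setting $g\equiv 0$ and discarding case (A) as you do) yields $\xi_{s^{*}}(x)=\kappa(s^{*})x^{\rho(s^{*})}$. Your closing concern about upgrading the identity from the product projection of the domain to all of $I_{s^{*}}$ is a point the paper does not address at all, so your treatment is, if anything, slightly more careful there.
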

}
\begin{proof}
 Since the mapping 
 \[
 J\ni \lambda \longmapsto \eta(\lambda, s^{\ast})
\]
is constant, Iverson's law of similarity with $s=s^{\ast}$, i.e. 
{
\[
 \xi_{s^{\ast}}(\lambda x)= \gamma(\lambda, s^{\ast})\xi_{\eta(\lambda, s^{\ast})}(x)
 \qquad 
 \left(x\in I_{s*}, \lambda \in J\right), 
\]
}
becomes a multiplicative Pexider equation. This equation is satisfied on the open and connected set 
\[
\left\{ (\lambda, x) \, \vert \, \lambda\in J, (x, s^*)\in B  \text{ for which also } (\lambda x, s^*)\in B\right\} \subset ]0, +\infty[^{2}
.\] 

Further, the function $\xi_{s^{\ast}}$ is philandering and  measurable on the interval $I_{s^{\ast}}$. Thus, from Lemma \ref{lemma_Aczel} (or directly from \cite[Corollary 6]{Aczel2005}) we obtain that  there exist real constants (depending on $s^{\ast}$), say $\kappa(s^{\ast})$ and $\rho(s^{\ast})$ such that 
{
\begin{equation}\label{eq_constant}
 \xi_{s^{\ast}}(x)= \kappa(s^{\ast})x^{\rho(s^{\ast})} 
 \qquad 
 \left(x\in I_{s^{\ast}}\right). 
\end{equation}
}
\end{proof}

\begin{rem}
 It can be seen from the steps of the above proof that we do not necessarily have to assume that the function $\xi_{s}$ is measurable for all $s\in \tilde{S}$. However, then we only get a `local' version of the above statement. More precisely, if we assume that there is an $s^{\ast}$ for which $\xi_{s^{\ast}}$ is measurable, then we get  that there exist real constants $\kappa(s^{\ast})$ and $\rho(s^{\ast})$ such that 
{representation \eqref{eq_constant} holds for all $x\in I_{s^{\ast}}$.} 
\end{rem}

\begin{rem}
 Notice that the representation obtained in the above theorem satisfies an identity much more specific than Iverson's law of similarity. Indeed, if 
 \[
  \xi_{s}(x)= \kappa(s)x^{\rho(s)}
\]
is fulfilled for all $s\in \tilde{S}$ and $x\in I_{s}$, then 
{
\[
 \xi_{s}(\lambda x) = \lambda^{\rho(s)} \xi_{s}(x) 
\]
}
for all $\lambda\in J$ and $(x, s)\in B$ for which also $(\lambda x, s)\in B$, that is, Falmagne's power law model holds. 
As mentioned earlier, in the case of intensity discrimination, this model is called the near-miss to Weber's law when the exponent $\rho(s)$ is close to but different from $1$ for some or all $s$, and it  gives Weber's law when  $\rho(s)=1$ for all $s$. {This law (in the form $\xi_{s}(x)= \kappa(s)x^{\rho(s)}$ or $\xi_{s}(\lambda x) = \lambda^{\rho(s)} \xi_{s}(x)$) is studied in some detail in \cite{Doble_et_al2006} and \cite{Augustin2008}. In the latter, it is shown that the law is compatible with common psychometric functions such as the Weibull, logistic, Gaussian, and Gumbel distributions, with possible restrictions on the functions $\kappa(s)$ and $\rho(s)$.  But note that this law is not general enough to cover some empirical situations, such as the partial masking experiment mentioned in the Introduction, for which the shift invariance model $\xi_{\lambda^\theta s}(\lambda x) = \lambda\, \xi_s (x)$ has been shown to be appropriate; see \cite{IversonPavel1981}.  (This shift invariance model is distinct from the model $\xi_{s}(\lambda x) = \lambda^{\rho(s)} \xi_{s}(x)$, as can be seen by the presence of $\xi_{\lambda^\theta s}$ in the shift invariance model, which is indexed by $\lambda^\theta s$ rather than by $s$.)}
\end{rem}

So we see from Remark \ref{form_xi} and Theorem \ref{thm_constant} that assumptions about $\eta$ give helpful information about Iverson's law of similarity without assuming a further representation for $\xi$.   Theorem \ref{thm_constant} says that for $s$ for which the mapping $\lambda \longmapsto \eta(\lambda, s)$ is constant, Iverson's law of similarity simplifies to Falmagne's power law model.  In the extreme case that this mapping is constant for all $s$ (that is, $\eta(\lambda, s)=\nu(s)$ for some function $\nu$, for all $(\lambda, s)$), Falmagne's power law model holds for all $s$.  Theorem \ref{form_xi} looks at a case for which there is some $s^{\ast}$ for which this does not hold, and more specifically, that there is some $s^{\ast}$ such that $\lambda \longmapsto \eta(\lambda, s^{\ast})$ is invertible.  In such a case, Theorem \ref{form_xi} shows that the function $\xi$ can be expressed with the aid of one-variable functions. 

In the following, we determine those mappings which have the form that appears in Remark \ref{form_xi} and that also have a gain-control type affine representation of the form
\[
\xi_{s}(x)= u^{-1}\left(s\sigma(x)+u(x)\right)
\]
for all possible $x$ and $s$, and with appropriate functions $u$ and $\sigma$.

\begin{prop}\label{prop1}
Let {$I, J, S\subset [0, +\infty[$} be  intervals and suppose that the one-parameter family of functions $\xi_{s}\colon I\to {[0, +\infty[}\, (s\in S)$  admits a representation 
\[
\xi_{s}(x)= \frac{\Phi(f(s)x)}{g(s)}
\qquad
\left(x\in I, s\in S\right)
\]
with some functions $\Phi, f$ and {nowhere zero function $g$}.
Assume further, that {the family $\left\{ \xi_{s}\, \vert \, s\in S\right\}$} fulfills the following affine representation
\[
\xi_{s}(x)= u^{-1}\left(s\sigma(x)+u(x)\right)
\]
for all possible $x\in I$ and $s\in S$ with an appropriate strictly monotonic {and continuous} function $u$ and with an appropriate function $\sigma$. 
If $0\in S$, then
{
\begin{enumerate}[(A)]
\item either there exist nonzero real numbers $c$ and $\mu$ and an appropriate real number $d$ such that 
\[
\xi_{s}(x)= \sqrt[\mu]{\frac{s+d}{c}}  \cdot x
\qquad
\left(x\in I, s\in S\right), 
\]
\item or there exist real numbers $c, d$ with $c\neq 0$ such that 
\[
\xi_{s}(x)= \exp\left(\frac{s-d}{c}\right)  \cdot x
\qquad
\left(x\in I, s\in S\right), 
\]
\item or we have 
\[
 \xi_{s}(x)=x 
 \qquad 
 \left(x\in I, s\in S\right). 
\]
\end{enumerate}}
\end{prop}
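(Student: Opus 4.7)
The plan is to reduce $\xi_s(x)$ to a function linear in $x$, and then invoke Lemma \ref{lemma_Aczel} to classify the remaining one-variable data. First, exploiting $0\in S$, the gain-control representation at $s=0$ gives $\xi_0(x)=u^{-1}(u(x))=x$, while the other representation at $s=0$ gives $\xi_0(x)=\Phi(f(0)x)/g(0)$. Comparing these yields $\Phi(f(0)x)=g(0)x$ for all $x\in I$. Since $g$ is nowhere zero, $g(0)\neq 0$; and since the right-hand side is non-constant in $x$, $f(0)\neq 0$ as well. Consequently $\Phi$ is linear with slope $c_0:=g(0)/f(0)$ on the interval $f(0)\cdot I$.

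Next I would show that $\xi_s(x)=K(s)x$ on all of $S\times I$, where $K(s):=g(0)f(s)/(f(0)g(s))$. For any $(s,x)$ with $f(s)x\in f(0)\cdot I$, the linearity of $\Phi$ gives $\xi_s(x)=\Phi(f(s)x)/g(s)=c_0 f(s)x/g(s)=K(s)x$. Using the continuity in $x$ inherited from the gain-control representation together with the linearity in $x$ of the right-hand side, this identity propagates from the sub-interval to all of $I$. In particular $K(0)=1$. Substituting into the gain-control representation then gives the functional equation $u(K(s)x)-u(x)=s\,\sigma(x)$ on $S\times I$.

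If $\sigma\equiv 0$, the strict monotonicity of $u$ forces $K\equiv 1$ and hence $\xi_s(x)=x$, giving case (C). Otherwise, introducing $y=K(s)$ (so that $s=K^{-1}(y)$) rewrites the identity as $u(xy)=u(x)+\sigma(x)K^{-1}(y)$, which matches the form $f(st)=g(s)+h(s)k(t)$ of Lemma \ref{lemma_Aczel} with $f=g=u$, $h=\sigma$, and $k=K^{-1}$. Since $u$ is continuous and strictly monotonic it is measurable and philandering, so the lemma applies. The forced equality $f=g$ kills one parameter in each alternative: Aczel's case (A) produces $K(s)=\exp(\beta_3 s/\beta_0)$, and setting $c:=\beta_0/\beta_3$ (with $d:=0$) gives case (B) of the proposition; Aczel's case (B) produces $K(s)=\bigl((s+\beta_4)/\beta_4\bigr)^{1/\beta_0}$, and setting $\mu:=\beta_0$ and $c:=d:=\beta_4$ gives case (A) of the proposition.

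The main obstacle is the extension in the second paragraph: linearity of $\Phi$ on $f(0)\cdot I$ only directly yields $\xi_s(x)=K(s)x$ on the sub-interval $\{x\in I\,:\,f(s)x\in f(0)\cdot I\}$, and propagating this to all of $I$ requires either a continuity argument (with appropriate regularity of $\sigma$) or a mild range condition on $f$. Once that step is in place, the remaining work — verifying the compatibility constraint $f=g$ within Aczel's dichotomy and matching the two solution families to the parameterizations $(c,d,\mu)$ in the proposition — is routine.
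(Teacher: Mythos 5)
Your proposal follows essentially the same route as the paper's proof: set $s=0$ to force $\Phi$ linear, deduce $\xi_s(x)=\alpha(s)x$, split on whether $\sigma\equiv 0$, and feed $u(tx)=u(x)+\sigma(x)\alpha^{-1}(t)$ into Lemma \ref{lemma_Aczel} with $f=g=u$. The domain-propagation issue you flag as ``the main obstacle'' is real, but the paper's own proof passes over it silently (it asserts $\Phi(x)=\alpha x$ on $I$ and then $\xi_s(x)=\alpha(s)x$ on all of $I\times S$ without checking that $f(s)x$ lands where linearity of $\Phi$ has been established), so you are not missing an idea the paper supplies. The one step you do skip that the paper carries out is the justification for writing $K^{-1}$: before substituting $y=K(s)$ you must show $K$ is injective (and that $K(S)$ is an interval, so the lemma's domain hypotheses hold). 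The paper gets this by picking $x^{\ast}$ with $\sigma(x^{\ast})\neq 0$, observing that necessarily $x^{\ast}\neq 0$, and reading off from $\alpha(s)x^{\ast}=u^{-1}(s\sigma(x^{\ast})+u(x^{\ast}))$ that $\alpha$ is strictly monotonic and continuous; you should insert the same short argument. With that added, your matching of Acz\'el's two alternatives to cases (A) and (B) is routine and correct.
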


\begin{proof}
Under the assumptions of the proposition, we have
\[
\frac{\Phi(f(s)x)}{g(s)}= u^{-1}\left(s\sigma(x)+u(x)\right)
\qquad
\left(x\in I, s\in S\right).
\]
Let $s=0$ in this identity to deduce that
\[
\frac{\Phi(f(0)x)}{g(0)}= u^{-1}(u(x))= x
\qquad
\left(x\in I\right),
\]
that is,
\[
\Phi(x)= \alpha x
\qquad
\left(x\in I\right),
\]
where $\alpha= \dfrac{g(0)}{f(0)}$. Note that this already yields that
\[
\xi_{s}(x)= \alpha(s)x
\qquad
\left(x\in I, s\in S\right)
\]
with an appropriate function $\alpha \colon S\to \mathbb{R}$.

Therefore 
\[
 \alpha(s)x= u^{-1}(s\sigma(x)+u(x)) 
 \qquad 
 \left(x\in I, s\in S\right), 
\]
or equivalently 
\[
 u(\alpha(s)x)= s\sigma(x)+u(x)
 \qquad 
 \left(x\in I, s\in S\right). 
\]
If $\sigma$ is the identically zero function, then we obtain 
\[
 \alpha(s)x=x 
 \qquad 
 \left(x\in I, s\in S\right), 
\]
from which $\alpha(s)=1$ follows for all $s\in S$. This leads to alternative (C). 

If $\sigma$ is not the identically zero function, then there exists an $x^{\ast}\in I$ such that $\sigma(x^{\ast})\neq 0$. Note that if such a point $x^{\ast}$ exists, then we necessarily have $x^{\ast}\neq 0$. Indeed, if we would have $x^{\ast}=0$, then the above identity would take the following form 
\[
 u(0)= s\sigma(0)+u(0) 
 \qquad 
 \left(s\in S\right), 
\]
which is impossible. 
Further, the above identity with $x^{\ast}$ instead of $x$ is 
\[
  \alpha(s)x^{\ast}= u^{-1}(s\sigma(x^{\ast})+u(x^{\ast})) 
 \qquad 
 \left(s\in S\right). 
\]
Since the function $u$ is strictly monotonic and continuous, the right hand side, as a function of the variable $s$, i.e., the mapping 
\[
 s\longmapsto u^{-1}(s\sigma(x^{\ast})+u(x^{\ast})) 
\]
is also strictly monotonic and continuous. From this we obtain that the function $\alpha$ is strictly monotonic and continuous. Thus the set $\alpha(S)= \left\{\alpha(s)\, \vert \, s\in S\right\} {\subset [0, +\infty[}$ is an interval, and the function $\alpha$ is invertible. 
Therefore we have 
\[
 u(t x)= \alpha^{-1}(t)\sigma(x)+u(x)
 \qquad 
 \left(x\in I{\setminus \left\{ 0\right\}}, t \in \alpha(S){\setminus \left\{ 0\right\}}\right).
\]


In view of Lemma \ref{lemma_Aczel}\footnote{To apply Lemma \ref{lemma_Aczel}, the intervals $I{\setminus \left\{ 0\right\}}$ and $\alpha(J){\setminus \left\{ 0\right\}}$ should be open intervals of $]0, +\infty[$. This may not be fulfilled. At the same time, if this is the case, then we work on the interiors of these intervals. The theorem of Aczél is then applicable to these open intervals. Note, however, that when we move to the interiors of the mentioned intervals, we "lose" at most two points, the endpoints. However, since the functions in the statement are continuous, the function values at the points in question can be obtained with taking the limits at the end of the proof.}, the function $\alpha^{-1}$ and thus $\xi$ can be determined. {Either there exist real constants $c, d$ with $c\neq 0$ such that 
\[
\alpha^{-1}(t)= c\ln(t)+d 
\qquad 
\left(t\in \alpha(S) \right), 
\]
that is, 
\[
\alpha(s)= \exp\left(\frac{s-d}{c}\right) 
\qquad 
\left(s\in S\right)
\]
and then 
\[
\xi_{s}(x)= \exp\left(\frac{s-d}{c}\right)  \cdot x
\qquad
\left(x\in I, s\in S\right),
\]
or we have
\[
\alpha^{-1}(t)= c t^{\mu}-d  
\qquad 
\left(t\in \alpha(S)\right)
\]
with some nonzero real numbers $c$ and $\mu$ and with an appropriate real number $d$. 
Then 
\[
\alpha(s)= \sqrt[\mu]{\frac{s+d}{c}} 
\qquad 
\left(s\in S\right)
\]
so we have 
\[
\xi_{s}(x)= \sqrt[\mu]{\frac{s+d}{c}}  \cdot x
\qquad
\left(x\in I, s\in S\right).
\]}
\end{proof}

{
\begin{rem}
Notice that from the above proof we can also obtain the possible forms of the functions $u$ and $\sigma$. Indeed, either
 \[
 u(x)= c_{1} c x^{\mu}+d_{1}
 \qquad
 \text{and}
 \qquad
 \sigma(x)= c_{1} x^{\mu}
 \qquad
 \left(x\in I\setminus \left\{ 0\right\}\right)
 \]
 with some nonzero real numbers $c_{1}, \mu$ and with an appropriate real number $d_{1}$ (in the above formula, $c$ is the same constant as it appears in alternative (A)), 
 or there exists a nonzero real constant $\sigma$  such that
 \[
 u(x)= \sigma (c \ln(x)+d)
 \qquad
 \text{and}
 \qquad
 \sigma(x)= \sigma
 \qquad
 \left(x\in I\setminus \left\{ 0\right\}\right), 
 \]
 where the constants $c, d$ are the same as in alternative (B). 
In the case of alternative (C), the function $\sigma$ is the identically zero function and the function $u$ can be any strictly monotonic and continuous function. 
\end{rem}
}

\begin{rem}
 If the conditions of the above proposition are fulfilled, then we deduce that 
in all of alternatives (A)--(C), the mapping $\xi_s$ must satisfy Weber's law, i.e. $\xi_s (\lambda x) = \lambda \xi_s (x)$ must hold for all possible values of $x$ and $s$.  As Weber's law is too restrictive for many experimental situations, we conclude that the assumptions of the proposition (the forms $\xi_{s}(x)= \frac{\Phi(f(s)x)}{g(s)}$ and $\xi_{s}(x)= u^{-1}\left(s\sigma(x)+u(x)\right)$, and the fact that $0 \in S$) are quite limiting.  (Especially, we see that Alternative (C), for which $\xi_s$ does not depend on $s$, would not occur in any applications under consideration.)  However, see Remark \ref{rem3} and the representation considered in Proposition \ref{prop3} below. 
\end{rem}

\begin{rem}\label{rem3}
Let $I, S= ]0, +\infty[$ and consider the sensitivity functions $\xi_{s}$ defined on $I$ for all $s\in S$ by
\[
\xi_{s}(x)= x+s
\qquad
\left(x\in I, s\in S\right).
\]
These functions fulfill Iverson's law of similarity since we have
\[
\xi_{s}(\lambda x)= s+\lambda x= \lambda \left( \frac{s}{\lambda}+x\right)=
\lambda \xi_{\frac{s}{\lambda}}(x)
\]
for all $\lambda, x\in I$ and $s\in S$, that is, in this case
\[
\gamma(\lambda, s)= \lambda
\qquad
\text{and}
\qquad
\eta(\lambda, s) = \frac{s}{\lambda}
\qquad
\left(\lambda\in I, s\in S\right).
\]
Moreover, in accordance with Remark \ref{form_xi} we have
{
\[
\xi_{s}(x)= \frac{\Phi(f(s)x)}{g(s)}
\qquad
\left(x\in I, s\in S\right),
\]}
since
\[
\xi_{s}(x)= s+x= s \left(1+\frac{x}{s}\right)= \dfrac{\left(1+\frac{x}{s}\right)}{\frac{1}{s}}=
\dfrac{\left(1+\frac{1}{s}x\right)}{\frac{1}{s}}
\qquad
\left(x\in I, s\in S\right),
\]
i.e.,
\[
f(s)= \frac{1}{s}
\qquad
g(s)= \frac{1}{s}
\qquad
\text{and}
\qquad
\Phi(x)=x+1
\qquad
\left(x\in I, s\in S\right).
\]
Finally, we have
\[
\xi_{s}(x)= u^{-1}(s+u(x))
\qquad
\left(x\in I, s\in S\right),
\]
i.e., a gain-control type affine representation with 
\[
u(x)=x
\qquad
\text{and}
\qquad
\sigma(x)=1
\qquad
\left(x\in I\right).
\]
Nevertheless, the sensitivity functions $\xi_s$ cannot be written in any of the forms (A)--(C) in Proposition \ref{prop1}. This is caused by the fact that in this example the assumption $0\in S$ does not hold.  
\end{rem}

We now examine the form $\xi_{s}(x)= \frac{\Phi(f(s)x)}{g(s)}$ along with a different representation than in Proposition \ref{prop1}, namely, we examine the representation $\xi_{s}(x)= u(x)+v(s)$.
Recall that this is the representation associated with an anchored and parallel psychometric family, although here we do not assume that $u$ and $v$ are continuous.

\begin{prop}\label{prop3}
Let $I, S\subset ]0, +\infty[$ be  intervals and suppose that the one-parameter family of functions $\xi_{s}\colon I\to \mathbb{R}\, (s\in J)$  admits a representation
\[
\xi_{s}(x)= \frac{\Phi(f(s)x)}{g(s)}
\qquad
\left(x\in I, s\in S\right)
\]
with some function $\Phi$ and {nowhere zero functions $f$ and $g$}. Suppose further that the mapping $\xi$ admits a representation of the form
\begin{equation}\label{parallel}
\xi_{s}(x)= u(x)+v(s)
\qquad
\left(x\in I, s\in S\right). 
\end{equation}
If the function $\Phi$ is measurable and philandering and $f$ is strictly monotonic, positive and continuous, then  
\begin{enumerate}[(a)]
\item either there exist real constants $\alpha, \beta, \gamma$ with $\alpha\neq 0$ 
such that
\[
u(x)= \alpha \ln(x)+\beta
\qquad
v(s)= \alpha \ln(s)+\gamma
\qquad
\left(x\in I, s\in S\right)
\]
and
\[
\xi_{s}(x)= \alpha \ln(f(s)x)+\beta +\gamma
\qquad
\left(x\in I, s\in S\right)
\]
\item or there exist real constants $\alpha, \beta, \gamma, \rho$ with $\alpha, \beta, \rho \neq 0$ 
such that
\[
u(x)= \alpha x^{\rho}-\beta
\qquad
v(s)= \frac{\gamma}{f(s)^{\rho}}+\beta
\qquad
\left(x\in I, s\in S\right)
\]
and
\[
\xi_{s}(x)= \alpha x^{\rho}+\frac{\gamma}{f(s)^{\rho}}
\qquad
\qquad
\left(x\in I, s\in S\right).
\]
\end{enumerate}
\end{prop}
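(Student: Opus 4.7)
The plan is to fuse the two given representations of $\xi_s$ into a single Pexider-type identity and then invoke Lemma \ref{lemma_Aczel} to classify the solutions. Equating the two expressions for $\xi_s(x)$ and clearing the denominator yields
\[
\Phi(f(s)\, x) = g(s)\, u(x) + g(s)\, v(s) \qquad \left(x \in I,\ s \in S\right).
\]
Because $f$ is strictly monotonic, positive and continuous on the interval $S$, its image $T := f(S)$ is an interval contained in $\,]0,+\infty[$, and $f^{-1}: T \to S$ is well defined. Setting $t = f(s)$ and introducing
\[
G(t) := g\bigl(f^{-1}(t)\bigr), \qquad H(t) := g\bigl(f^{-1}(t)\bigr)\, v\bigl(f^{-1}(t)\bigr) \qquad (t \in T),
\]
the identity becomes
\[
\Phi(t x) = G(t)\, u(x) + H(t) \qquad \left(x \in I,\ t \in T\right),
\]
which is exactly the Pexider-type functional equation of Lemma \ref{lemma_Aczel}.

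To apply that lemma I need the equation to hold on an open, connected subset of $\,]0,+\infty[^{2}$. Since $I$ and $T$ may fail to be open, I first restrict to $\operatorname{int}(T) \times \operatorname{int}(I)$ and recover the endpoint values at the end by continuity, mirroring the technique already employed in the proof of Proposition \ref{prop1}. The measurability and philandering hypotheses on $\Phi$ are exactly those required by Lemma \ref{lemma_Aczel}, so the quadruple $(\Phi, G, H, u)$ must fall into one of its two alternatives. In case (A) the lemma gives $G \equiv \beta_{3}$ (so $g$ is a nonzero constant), $u(x) = (\beta_{0}/\beta_{3})\ln x + \beta_{2}$, and $H(t) = \beta_{0}\ln t + \beta_{1}$; dividing $H$ by the constant $G$ recovers $v \circ f^{-1}$, hence $v$, and renaming $\alpha := \beta_{0}/\beta_{3}$, $\beta := \beta_{2}$, $\gamma := \beta_{1}/\beta_{3}$ produces the logarithmic forms for $u$ and $v$; summing them delivers alternative~(a) with $\xi_s(x) = \alpha \ln(f(s) x) + \beta + \gamma$. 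Case (B) proceeds in the same spirit: from $G(t) = \beta_{3} t^{\beta_{0}}$ one reads $g(s) = \beta_{3} f(s)^{\beta_{0}}$, from $H$ one isolates $v(s)$, and with $\rho := \beta_{0}$, $\alpha := \beta_{4}$, $\beta := \beta_{2}/\beta_{3}$, $\gamma := \beta_{1}/\beta_{3}$ the power-law forms of alternative~(b) emerge; the sum $u(x)+v(s)$ then collapses the $\pm \beta$ terms to give $\xi_{s}(x) = \alpha x^{\rho} + \gamma/f(s)^{\rho}$.

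The main obstacle is not the algebra but the bookkeeping of hypotheses. One must verify that the equation for $\Phi$ is genuinely defined on an open and connected piece of $\,]0,+\infty[^{2}$ (this is where the continuity and strict monotonicity of $f$ are essential), track which of Aczél's constants must be nonzero so as to translate into the nonvanishing conditions on $\alpha$, $\rho$ and, in case~(b), $\beta$ claimed in the proposition, and finally check that the $g$ extracted from $G$ is nowhere zero, in agreement with the standing hypothesis on $g$. Once these items are in place, the two alternatives of Lemma \ref{lemma_Aczel} translate term-for-term into the two alternatives of Proposition \ref{prop3}.
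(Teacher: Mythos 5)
Your proposal is correct and follows essentially the same route as the paper: equate the two representations, use the positivity, continuity and strict monotonicity of $f$ to rewrite the identity as the Pexider-type equation $\Phi(tx)=G(t)u(x)+H(t)$ on $f(S)\times I$, and apply Lemma~\ref{lemma_Aczel} to read off the two alternatives. The only cosmetic difference is that the paper additionally compares coefficients between $g(s)\xi_{s}(x)=\Phi(f(s)x)$ and $\xi_{s}(x)=u(x)+v(s)$ to pin down its constants in case (a), a consistency check that your more careful bookkeeping of Acz\'{e}l's constants renders unnecessary.
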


\begin{proof}
{Combining the assumptions of the proposition, 
\[
\Phi(f(s)\cdot x)= g(s)u(x)+g(s)v(s)
\qquad
\left(x\in I, s\in S\right).
\]}
Since the function $f$ is strictly monotonic and continuous, the set $f(S)$ is an interval and $f$ is invertible. Thus we get that
\[
\Phi(sx)= g(f^{-1}(s))u(x)+g(f^{-1}(s))v(f^{-1}(s))
\]
for all $s\in f(S)$ and $x\in I$. Further, the function $\Phi$ is assumed to be measurable and  philandering, therefore Lemma \ref{lemma_Aczel} applies. This means that  
\begin{enumerate}[(a)]
\item either
\[
\begin{array}{rcl}
\Phi(x)&=&\beta_{0}\ln(x)+\beta_{1}+\beta_{2}\beta_{3}\\
g(f^{-1}(s))v(f^{-1}(s)) &=&\beta_{1}\ln(s)+\beta_{1}\\
g(f^{-1}(s)) &=&\beta_{3}\\
u(x)&=&\frac{\beta_{0}}{\beta_{3}}\ln(x)+\beta_{2}
\end{array}
\]
\item or
\[
\begin{array}{rcl}
\Phi(x)&=&\beta_{3}\beta_{4}x^{\beta_{0}}+\beta_{1}\\
g(f^{-1}(s))v(f^{-1}(s)) &=&\beta_{2}s^{\beta_{0}}+\beta_{1}\\
g(f^{-1}(s))&=&\beta_{3}s^{\beta_{0}}\\
u(x)&=&\beta_{4}x^{\beta_{0}}-\frac{\beta_{2}}{\beta_{3}},
\end{array}
\]
where $\beta_{i}$ are real constants for $i=1, 2, 3, 4$ such that $\beta_{0}, \beta_{3}, \beta_{4}\neq 0$.
\end{enumerate}
Let us consider alternative (a) first. The third identity shows that the function $g$ is constant. Using this in the second identity,
\[
\beta_{3}v(f^{-1}(s))=\beta_{1}\ln(s)+\beta_{1},
\]
i.e.,
\[
v(s) =\frac{\beta_{1}}{\beta_{3}}\ln(f(s))+\frac{\beta_{1}}{\beta_{3}}
\]
follows for all $s\in S$.
So
\[
\xi_{s}(x)= u(x)+v(s)=
\frac{\beta_{0}}{\beta_{3}}\ln(x)+\beta_{2} +
\frac{\beta_{1}}{\beta_{3}}\ln(f(s))+\frac{\beta_{1}}{\beta_{3}}.
\]
At the same time, we also have
\[
g(s)\xi_{s}(x)= \Phi(f(s)x).
\]
Comparing the coefficients, this is possible only if $\beta_{0}= \beta_{1}$.
Therefore
\[
\begin{array}{rcl}
u(x)&=& {\dfrac{{\beta}_{1}\,\ln(x)}{{\beta}_{3}}}+{\beta}_{2}\\[3mm]
v(s)&=& {\dfrac{{\beta}_{1}\,\ln( f\left(s\right))}{{\beta}_{3}}}+{\dfrac{
{\beta}_{1}}{{\beta}_{3}}}
\end{array}
\qquad
\left(x\in I, s\in S\right)
\]
and
\[
\xi_{s}(x)=
{\frac{{\beta}_{1}\,\ln(x)}{{\beta}_{3}}}+{\frac{{\beta}_{1}\,\ln(f \left(s\right))}{{\beta}_{3}}}+{\frac{{\beta}_{1}}{{\beta}_{3}}}+{\beta}_{2}
\]
for all $x\in I$ and $s\in S$.

Finally, consider alternative (b). The third identity yields that
\[
g(s)= \beta_{3}f(s)^{\beta_{0}}
\qquad
\left(s\in S\right)
\]
{and} the second identity implies that
\[
v(x)= {\frac{{\beta}_{2}\,f\left(s\right)^{{\beta}_{0}}+{\beta}_{1}}{
{\beta}_{3}\,f\left(s\right)^{{\beta}_{0}}}}
= \frac{\beta_{2}}{\beta_{3}}+\frac{\beta_{1}}{\beta_{3}} f(s)^{-\beta_{0}}.
\]
Since
\[
u(x)= \beta_{4}x^{\beta_{0}}-\frac{\beta_{2}}{\beta_{3}}
\qquad
\left(x\in I\right),
\]
we obtain that
\[
\xi_{s}(x)=
{\beta}_{4}\,x^{{\beta}_{0}}+{\frac{{\beta}_{2}\,f\left(s\right)^{
{\beta}_{0}}+{\beta}_{1}}{{\beta}_{3}\,f\left(s\right)^{
{\beta}_{0}}}}-{\frac{{\beta}_{2}}{{\beta}_{3}}}
=
{\beta}_{4}\,x^{{\beta}_{0}} +\frac{\beta_{1}}{\beta_{3}} f(s)^{-\beta_{0}}
\qquad
\left(x\in I, s\in S\right).
\]

\end{proof}

\begin{rem}
Observe that in contrast to Proposition \ref{prop1}, in Proposition \ref{prop3} there was no need to assume that $0\in S$. Thus, the sensitivity function
that was considered in Remark \ref{rem3}, appears here as a possible solution. Indeed, let us take {in (b)}
\[
\alpha = \gamma = \rho =1 \quad
\text{and}
\quad
\beta =0,
\]
further
\[
f(s)= \frac{1}{s}
\]
to get that
\[
\xi_{s}(x)= x+s.
\]
\end{rem}

If instead of general one-parameter family of functions, we consider a sensitivity function $\xi$, then identity \eqref{parallel} expresses the fact that $\xi$ is the sensitivity function of an anchored and parallel psychometric family $\mathcal{F}= \left\{ p_{a}\, \vert \, a\in I\right\}$.

As the following statement shows this assumption is quite restrictive even \emph{without} Iverson's law of similarity. Below we describe those sensitivity functions that stem from an anchored, parallel, and balanced psychometric family.

\begin{prop}\label{prop4}
Let $\mathcal{F}= \left\{ p_{a}\, \vert \, a\in I\right\}$ be a psychometric family that is anchored, parallel and balanced, and denote by $\xi$ the sensitivity function of $\mathcal{F}$. Then there exists a function $\nu\colon S\to \mathbb{R}$ which is antisymmetric with respect to the point $s_{0}= \frac{1}{2}$, i.e.
\[
\nu(1-s)= -\nu(s)
\qquad
\left(s\in S\right)
\]
such that
\[
\xi_{s}(x)= x+\nu(s)
\]
holds for all $x\in I$ and $s\in S$.
\end{prop}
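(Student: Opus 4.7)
The plan is to combine Theorem 8.5 of \cite{Falmagne1985} (which gives the additive representation for an anchored parallel family) with the balanced condition to pin down $u$ and to constrain $v$. Since $\mathcal{F}$ is anchored and parallel, Theorem 8.5 of \cite{Falmagne1985} yields strictly increasing, continuous functions $u$ and $v$ such that $\xi_s(x) = u(x) + v(s)$ for all $x \in I$ and $s \in S$. The balanced hypothesis, in its sensitivity-function form $\xi_{1-s}(\xi_s(x)) = x$, therefore reads
\[
u(u(x) + v(s)) + v(1-s) = x,
\]
which, using that $\xi_{1-s} = \xi_s^{-1}$, can equivalently be rewritten as
\[
u(y) + v(1-s) = u^{-1}(y - v(s))
\]
for all admissible $y$ and $s$.

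Next, I would fix $s_1, s_2 \in S$ and subtract this last identity at these two values to obtain
\[
u^{-1}(y - v(s_1)) - u^{-1}(y - v(s_2)) = v(1-s_1) - v(1-s_2),
\]
a right-hand side independent of $y$. Setting $f = u^{-1}$, $w = y - v(s_2)$, and $h = v(s_2) - v(s_1)$, this says that $f(w+h) - f(w)$ is independent of $w$, for $h$ ranging over the interval $v(S) - v(S)$ (which contains $0$). Fixing some $w_0$ in the admissible range and writing $\tilde{f}(t) = f(t + w_0) - f(w_0)$ yields Cauchy's additive equation $\tilde{f}(s + t) = \tilde{f}(s) + \tilde{f}(t)$ on an interval about $0$. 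Since $u$ is continuous and strictly increasing, so is $f$, hence $\tilde{f}$ is continuous; therefore $\tilde{f}$ is linear, and so $u$ is affine: $u(x) = (x - \gamma)/\beta$ for some constants $\beta > 0$ and $\gamma$.

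Finally, substituting $u(x) = (x-\gamma)/\beta$ back into the balanced equation produces
\[
\frac{x - \gamma}{\beta^{2}} + \frac{v(s) - \gamma}{\beta} + v(1-s) = x \qquad (x \in I,\, s \in S),
\]
which forces $\beta = 1$ (since $\beta > 0$ and the coefficient of $x$ on the left must equal $1$) and $v(s) + v(1-s) = 2\gamma$. Setting $\nu(s) = v(s) - \gamma$ then gives $\nu(1-s) = -\nu(s)$ and
\[
\xi_s(x) = (x - \gamma) + v(s) = x + \nu(s),
\]
as required. I expect the main obstacle to be the Cauchy reduction in the middle step: one must verify that the set on which $f(w+h) - f(w) = D(h)$ holds is a genuinely two-dimensional rectangle in $(w,h)$ (so that the continuous solution of Cauchy's equation on an interval is available), and handle any subtleties at the endpoints of $v(S)$ by a final passage to the limit using the continuity of $u$ and $v$.
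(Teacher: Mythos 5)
Your proof is correct and takes essentially the same route as the paper's: both invoke Theorem 8.5 of Falmagne to get $\xi_{s}(x)=u(x)+v(s)$, use the balance condition to force $u$ to be affine with slope $1$, and then read off the antisymmetry of $v$ up to an additive constant. The only (minor) difference is the middle step, where the paper substitutes $x\mapsto u^{-1}(x)$, $s\mapsto v^{-1}(s)$ to obtain the Pexider equation $u(x+s)=u^{-1}(x)-v(1-v^{-1}(s))$, while you difference out the $u(y)$ term and solve a Cauchy equation for the increments of $u^{-1}$; both are standard, and the domain caveat you raise is no more delicate than the one the paper implicitly handles for its Pexider equation.
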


\begin{proof}

Due to the anchored and parallel properties of $\mathcal{F}$, there exist strictly increasing, continuous functions $u$ and $v$ such that 
\[
\xi_{s}(x)= u(x)+v(s)
\qquad
\left(x\in I, s\in S\right)
\]
holds. Since $\mathcal{F}$ was assumed to be balanced as well, we have
\[
\xi_{1-s}\left(\xi_{s}(x)\right)=x
\]
for all possible $x\in I$ and $s\in S$. Using the above representation, this means that the functions $u$ and $v$ have to satisfy
\[
u(u(x)+v(s))= x-v(1-s)
\qquad
\left(x\in I, s\in S\right).
\]
The functions $u$ and $v$ are continuous and strictly increasing, thus the latter identity implies that
\[
u(x+s)= u^{-1}(x)-v(1-v^{-1}(s))
\qquad
\left({x\in u(I), s\in v(S)}\right),
\]
which is a Pexider equation. Thus, we have especially that
\[
u(x)= \alpha_{1}x+\beta_{1}
\quad 
\left(x\in I\right)
\qquad
\text{and}
\quad
u^{-1}(x)= \alpha_{1}x+\beta_{2}
\qquad
\left({x\in u(I)}\right)
\]
with some appropriate constants $\alpha_{1}, \beta_{1}, \beta_{2}$. This is possible however only if $\alpha_{1}=1$ and $\beta_{2}=-\beta_{1}$.
So
\[
v(s)+2\beta_{1}=-v(1-s)
\qquad
\left(s\in S\right).
\]
This means that the function $\nu \colon S\to \mathbb{R}$ defined by
\[
\nu(s)= v(s)+\beta_{1}
\qquad
\left(s\in S\right)
\]
is antisymmetric with respect to the point $s_{0}=\frac{1}{2}$.
Therefore,
\[
\xi_{s}(x)= u(x)+v(s)= \left[x+\beta_{1}\right]+\left[\nu(s)-\beta_{1}\right]= x+\nu(s)
\]
for all $x\in I$ and $s\in S$, as stated.
\end{proof}

\subsection{Multiplicatively translational mappings}

In this subsection, we investigate Iverson's law of similarity
\[
\xi_{s}(\lambda x)= \gamma(\lambda , s)\xi_{\eta(\lambda, s)}(x)
\qquad
\left(x\in I, \lambda\in J, s\in S\right)
\]
assuming that the two-variable mapping $\eta$ is multiplicatively translational.

\begin{dfn}
Let $S, J\subset \mathbb{R}$ be  intervals such that $0\in S$ and $1\in J$. A mapping
$\eta\colon J\times S\to S$ is called \emph{multiplicatively translational} if
\[
\eta(\lambda\tilde{\lambda}, s)= \eta(\tilde{\lambda}, \eta(\lambda, s))
\]
and also the boundary conditions
\[
\eta(\lambda, 0)=0
\qquad
\eta(1, s)=s
\]
hold for all $\lambda, \tilde{\lambda}\in J$ and $s\in S$.
\end{dfn}

A source of motivation for studying multiplicatively translational $\eta$ comes from the shift invariance relationship 
\[
\xi_{\lambda^\theta s}(\lambda x) = \lambda\, \xi_s (x)  
\]
appearing in the work of \cite{PavelIverson1981}. In this context, $\xi_s (x)$ represents the intensity of an unmasked tone that matches the loudness of a tone of intensity $x$ embedded in a background of intensity $s$.  We could write this shift invariance relationship as 
\[
 \xi_s (\lambda x) =  \lambda\, \xi_{\lambda^{-\theta} s}(x)\,,
\]
which is clearly a case of Iverson's law of similarity with $\eta (\lambda, s)=\lambda^{-\theta}s$.  Note that the multiplicatively translational property generalizes the transformation $\eta (\lambda, s)=\lambda^{-\theta} s$,   since we have for this transformation that  
\[
\eta (\tilde{\lambda}, \eta(\lambda, s)) = \eta (\tilde{\lambda}, \lambda^{-\theta} s) = \tilde{\lambda}^{-\theta} \lambda^{-\theta} s =\eta(\lambda\tilde{\lambda}, s)\,.
\]
We also point out that in this experimental context, it is reasonable that the shift invariance relationship would hold for $s=0$. 

Note the following relationship between Iverson's law of similarity and the multiplicatively translational property, also pointed out in \cite{HsuIverson2016}.  If we
write $\tilde{\lambda} x$ in place of $x$ in Iverson's law of similarity, then by a successive application of the similarity law,
\begin{multline*}
\xi_{s}(\lambda \tilde{\lambda} x)= \gamma(\lambda , s)\xi_{\eta(\lambda, s)}( \tilde{\lambda}x)
\\
=
\gamma(\lambda , s)\gamma(\tilde{\lambda}, \eta(\lambda, s)) \xi_{\eta(\tilde{\lambda}, \eta(\lambda, s))}(x)
\quad
\left(\lambda, \tilde{\lambda}\in J, s\in S\right)
\end{multline*}
follows.
On the other hand, if we use Iverson's law of similarity only once, but for $\lambda \tilde{\lambda}$, we obtain that
\[
\xi_{s}(\lambda \tilde{\lambda} x)= \gamma(\lambda \tilde{\lambda} , s)\xi_{\eta(\lambda \tilde{\lambda} , s)}(x)
\quad
\left(\lambda, \tilde{\lambda}\in J, s\in S\right).
\]
Thus necessarily,
\[
\gamma(\lambda , s)\gamma(\tilde{\lambda}, \eta(\lambda, s)) \xi_{\eta(\tilde{\lambda}, \eta(\lambda, s))}(x)
=
\gamma(\lambda \tilde{\lambda} , s)\xi_{\eta(\lambda \tilde{\lambda} , s)}(x)
\quad
\left(\lambda, \tilde{\lambda}\in J, s\in S\right). 
\]
Therefore, the assumption that $\eta$ is multiplicatively translational expresses in some sense a kind of consistency in the above computations.


\begin{lem}\label{lem_trans}
Let $S, J\subset \mathbb{R}$ be  intervals such that $0\in S$ and $1\in J$. If the mapping
$\eta\colon J\times S\to S$ is multiplicatively translational and there exists an $s^{\ast}\in S$ such that the mapping
\[
J\ni \lambda \longmapsto \eta(\lambda, s^{\ast})
\]
is bijective, then there exists a bijective function $H\colon J\to S$ with $H(0)=0$
such that
\[
\eta(\lambda, s)= H(\lambda\cdot H^{-1}(s))
\qquad
\left(\lambda\in J, s\in S\right).
\]
\end{lem}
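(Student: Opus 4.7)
The natural candidate for $H$ is suggested by the statement itself: specializing the claimed identity to $s = s^{\ast}$ gives $\eta(\lambda, s^{\ast}) = H(\lambda \cdot H^{-1}(s^{\ast}))$, and since $H(1) = \eta(1, s^{\ast}) = s^{\ast}$ by the boundary condition, one must have $H(\lambda) = \eta(\lambda, s^{\ast})$. My plan is therefore to \emph{define} $H \colon J \to S$ by $H(\lambda) := \eta(\lambda, s^{\ast})$, which is bijective by hypothesis, and then to verify the two conclusions of the lemma: first the representation formula, and then the normalization $H(0) = 0$.

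For the representation, I would argue as follows. Fix arbitrary $\lambda \in J$ and $s \in S$. By surjectivity of $H$, there exists a unique $\mu = H^{-1}(s) \in J$ with $\eta(\mu, s^{\ast}) = s$. Applying the multiplicatively translational property
\[
\eta(\mu \lambda, s^{\ast}) = \eta(\lambda, \eta(\mu, s^{\ast}))
\]
(that is, the defining identity with the roles of the first variable played by $\mu$ and $\lambda$, respectively, and with $s = s^{\ast}$), the right-hand side is precisely $\eta(\lambda, s)$, while the left-hand side equals $H(\lambda \mu) = H(\lambda \cdot H^{-1}(s))$. This yields $\eta(\lambda, s) = H(\lambda \cdot H^{-1}(s))$, as required.

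To get $H(0) = 0$, I would feed $s = 0$ into the identity just established: for every $\lambda \in J$,
\[
0 = \eta(\lambda, 0) = H\bigl(\lambda \cdot H^{-1}(0)\bigr).
\]
If $H^{-1}(0)$ were nonzero, then as $\lambda$ ranges over the interval $J$ (which has positive length and contains $1$), the product $\lambda \cdot H^{-1}(0)$ would range over an interval of positive length on which $H$ would be identically $0$, contradicting injectivity of $H$. Hence $H^{-1}(0) = 0$, i.e.\ $H(0) = 0$.

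The main technical subtlety — and the point I would be most careful about — is the domain condition needed to invoke the multiplicatively translational property: the equality $\eta(\mu \lambda, s^{\ast}) = \eta(\lambda, \eta(\mu, s^{\ast}))$ presupposes $\mu \lambda \in J$ so that the left-hand side is defined. I would handle this by interpreting the multiplicatively translational identity in the convention of the paper (i.e., whenever all quantities lie in the stated domains), noting that the pair $(\mu, \lambda) \in J \times J$ with $\mu = H^{-1}(s)$ produces $\mu \lambda \in J$ under the standing domain assumptions on $J$ used throughout this subsection; the final step deriving $H(0) = 0$ then applies verbatim as soon as $0 \in J$, which is implicit in the statement $H(0) = 0$.
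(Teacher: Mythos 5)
Your proposal is correct and follows essentially the same route as the paper: define $H(\lambda)=\eta(\lambda,s^{\ast})$, use the multiplicatively translational identity at $s=s^{\ast}$ to get the representation, and then use $\eta(\lambda,0)=0$ to force $H^{-1}(0)=0$. The only cosmetic differences are that the paper deduces $H^{-1}(0)=\lambda H^{-1}(0)$ directly by applying $H^{-1}$ (where you argue by contradiction via injectivity on an interval), and that you make explicit the implicit domain closure $J\cdot J\subset J$ that the paper's definition also tacitly assumes.
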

\begin{proof}
Define the function $H\colon J\to S$ through
\[
H(\lambda)= \eta(\lambda, s^{\ast})
\qquad
\left(\lambda\in J\right).
\]
Then $H$ is a bijection from $J$ onto $S$. Further, since $\eta$ is multiplicatively translational,
\[
H(\lambda \tilde{\lambda})= \eta(\lambda, H(\tilde{\lambda}))
\]
holds for all $\lambda, \tilde{\lambda}\in J$, from which
\[
\eta(\lambda, s)= H(\lambda\cdot H^{-1}(s))
\qquad
\left(\lambda\in J, s\in S\right)
\]
follows. Moreover, we also have
\[
0= \eta(\lambda, 0)= H(\lambda\cdot H^{-1}(0))
\qquad
\left(\lambda \in J\right),
\]
from which we deduce
\[
H^{-1}(0)= \lambda H^{-1}(0)
\]
for all $\lambda \in J$, so $H^{-1}(0)=0$ and also $H(0)=0$.
\end{proof}

\begin{rem}
Observe that from the previous proof, it follows that either $J= \left\{ 1\right\}$ and thus $S= \left\{ 0\right\}$ follows, or $0\in J$ holds. So $[0, 1]\subset J$, otherwise the function $\eta$ is defined on the singleton $\left\{ (1, 0)\right\}$. 
\end{rem}

\begin{rem}
We further note that in the above proof, the boundary conditions only play a role in that with their help we can show that $H(0)=0$. This means that if we omit these boundary conditions, the representation
\[
 \eta(\lambda, s)= H(\lambda \cdot H^{-1}(s))
\]
is still true. 
\end{rem}

\begin{rem}
 Multiplicatively translational mappings were previously studied in  \cite{HsuIverson2016} and even in  \cite{Aczel1966, Aczel1987}. However, we would like to emphasize that the conditions there are different from the ones we use. 
 Indeed, according to \cite[Section 6, Theorem 3]{Aczel1987}, if $S$ is an interval and $\eta\colon ]0, +\infty[\times S\to S$ is a function that is \emph{continuous in both variables} and for which the mapping 
 \[
  \lambda \to \eta(\lambda, s)
 \]
is \emph{nonconstant for every $s\in S$}, and further
\[
 \eta(\tilde{\lambda}\lambda, s)= \eta(\lambda, \eta(\lambda, s))
\]
holds for all $\tilde{\lambda}, \lambda$ and $s\in S$, then there exists a continuous and strictly monotonic function $\psi\colon ]0, +\infty[\to S$ such that 
\[
 \eta(\lambda, s)= \psi(\lambda \cdot \psi^{-1}(s)) 
 \qquad 
 \left(\lambda \in ]0, +\infty[, s\in S\right). 
\]
Although the representations are the same, in this case it is assumed that $J=]0, +\infty[$ and the regularity conditions are different. 
\end{rem}

In what follows we will consider only those multiplicatively translational mappings $\eta$ for which there exists an $s^{\ast}\in S$ such that the mapping
\[
J\ni \lambda \longmapsto \eta(\lambda, s^{\ast})
\]
is bijective. In other words, we assume that there exists a bijective function $H\colon J\to S$ with $H(0)=0$ such that
\[
\eta(\lambda, s)= H(\lambda\cdot H^{-1}(s))
\qquad
\left(\lambda \in J, s\in S\right).
\]
For the sake of simplicity and easier distinction, such mappings will be termed \emph{regular multiplicatively translational} mappings. 

Observe that in this case the assumptions of Remark \ref{form_xi} are fulfilled. 
Indeed, a careful adaption of that proof yields that we have 
\[
\xi_{s^{\ast}}\left(\frac{H^{-1}(s)}{H^{-1}(s^{\ast})} x\right)= \gamma\left( \frac{H^{-1}(s)}{H^{-1}(s^{\ast})}, s^{\ast}\right) \xi_{s}(x)
\qquad
\left(x\in I, s \in S\right).
\]

\begin{prop}\label{prop2}
Let $I, J, S\subset \mathbb{R}$ be  intervals with $I\cdot J= \left\{ \lambda x\, \vert \, x\in I, \lambda \in J \right\}\subset I$ and $1\in J$, $0, 1\in S$. Let further $\gamma, \eta \colon J\times S\to S$ be functions and assume that the one-parameter family of functions $\xi_{s}\colon I\to \mathbb{R}{\setminus \left\{ 0\right\}}\, (s\in S)$ fulfills
\[
\xi_{s}(\lambda x)= \gamma(\lambda , s)\xi_{\eta(\lambda, s)}(x)
\qquad
\left(x\in I, \lambda\in J, s\in S\right).
\]
If $\eta$ is a regular multiplicatively translational mapping then
\[
\gamma(\lambda, s)= \frac{\kappa (h(s)\lambda)}{\kappa(h(s))}
\qquad
\left(\lambda, s\in S\right)
\]
holds, except for the points where the denominator vanishes, where the functions $\kappa$ and $h$ are defined by
\[
\kappa(\lambda)= \gamma(\lambda, 1)
\qquad
\text{and}
\qquad
h(s)= \frac{H^{-1}(s)}{H^{-1}(1)}
\qquad
\left(\lambda, s\in S\right).
\]
\end{prop}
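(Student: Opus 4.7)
The plan is to exploit the representation already derived in the discussion immediately preceding the proposition, substitute it back into Iverson's law, and then read off the form of $\gamma$.

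Since $1\in S$, I can take $s^{*}=1$ in the identity stated just above the proposition:
\[
\xi_{s^{*}}\left(\frac{H^{-1}(s)}{H^{-1}(s^{*})} x\right)= \gamma\left(\frac{H^{-1}(s)}{H^{-1}(s^{*})}, s^{*}\right) \xi_{s}(x).
\]
With the abbreviations $h(s)=H^{-1}(s)/H^{-1}(1)$ and $\kappa(\lambda)=\gamma(\lambda,1)$, this reads
\[
\xi_{1}(h(s)x)= \kappa(h(s))\,\xi_{s}(x).
\]
Because the family $\xi_{s}$ is nowhere zero, the right-hand side is nonzero, forcing $\kappa(h(s))\neq 0$ wherever both sides are defined, and therefore
\[
\xi_{s}(x)= \frac{\xi_{1}(h(s)x)}{\kappa(h(s))}.
\]

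Next, I plug this into both sides of Iverson's law of similarity. The left-hand side becomes
\[
\xi_{s}(\lambda x)= \frac{\xi_{1}(h(s)\lambda x)}{\kappa(h(s))}.
\]
For the right-hand side, the crucial computation is
\[
h(\eta(\lambda,s))= \frac{H^{-1}\!\bigl(H(\lambda H^{-1}(s))\bigr)}{H^{-1}(1)}= \frac{\lambda H^{-1}(s)}{H^{-1}(1)}= \lambda h(s),
\]
which uses nothing more than the bijectivity of $H$ and the explicit form $\eta(\lambda,s)=H(\lambda H^{-1}(s))$. Applying the representation to $\xi_{\eta(\lambda,s)}(x)$ then gives
\[
\gamma(\lambda,s)\,\xi_{\eta(\lambda,s)}(x)= \gamma(\lambda,s)\,\frac{\xi_{1}(\lambda h(s)x)}{\kappa(\lambda h(s))}.
\]

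Equating the two displays and cancelling the common nonzero factor $\xi_{1}(\lambda h(s)x)$ yields
\[
\frac{1}{\kappa(h(s))}= \frac{\gamma(\lambda,s)}{\kappa(\lambda h(s))},
\]
which rearranges into the claimed formula. There is no real obstacle: the only subtlety is excluding the exceptional set where $\kappa(h(s))=0$ or $\kappa(\lambda h(s))=0$, exactly as allowed in the statement, and checking that the arguments $h(s)x$ and $\lambda h(s)x$ live in $I$ — which follows from the standing assumption $I\cdot J\subset I$ together with $h(s),\lambda h(s)\in J$ that is implicit in the invocation of the preceding remark.
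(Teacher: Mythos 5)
Your proof is correct and is essentially the paper's argument in a slightly reorganized form: the paper applies the similarity law twice to derive the cocycle identity $\gamma(\lambda\tilde{\lambda},s)=\gamma(\lambda,s)\,\gamma(\tilde{\lambda},\eta(\lambda,s))$ and then specializes $s=1$, $\lambda=h(s)$, whereas you first specialize to get $\xi_{s}(x)=\xi_{1}(h(s)x)/\kappa(h(s))$ and then apply the law once more — the same two applications, the same use of the nonvanishing of $\xi$ to cancel, and the same implicit domain assumption $h(s),\lambda h(s)\in J$ that the paper also makes when it writes the identity preceding the proposition. Your observation that $\kappa(h(s))\neq 0$ automatically (so the exceptional set in the statement is empty under the hypothesis $\xi_{s}\colon I\to\mathbb{R}\setminus\{0\}$) is a small bonus consistent with the claim.
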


\begin{proof}
Under the assumptions of the proposition, let $\tilde{\lambda} \in J$ be arbitrary and let us substitute $\tilde{\lambda} x$ in place of $x$ in Iverson's law of similarity to get
\begin{multline*}
\xi_{s}(\lambda \tilde{\lambda} x)= \gamma(\lambda , s)\xi_{\eta(\lambda, s)}(\tilde{\lambda} x)
\\=
\gamma(\lambda , s) \gamma(\tilde{\lambda} , \eta(\lambda, s)) \xi_{\eta(\tilde{\lambda, \eta(\lambda, s)})}(x)
\qquad
\left(x\in I, \lambda, \tilde{\lambda}\in J, s\in S\right).
\end{multline*}
On the other hand, Iverson's law of similarity with $\lambda \tilde{\lambda}$ instead of $\lambda$ is
\[
\xi_{s}(\lambda \tilde{\lambda} x)= \gamma(\lambda \tilde{\lambda} , s)\xi_{\eta(\lambda \tilde{\lambda}, s)}(x)
\qquad
\left(x\in I, \lambda, \tilde{\lambda}\in J, s\in S\right).
\]
Using that the mapping $\eta$ is multiplicatively translational, we get that
\[
\gamma(\lambda \tilde{\lambda} , s)= \gamma(\lambda , s)\cdot \gamma(\tilde{\lambda} , \eta(\lambda, s))
\]
holds for all $ \lambda, \tilde{\lambda}\in J, s\in S$. Further, since $\eta$ is not only multiplicatively translational but also regular multiplicatively translational, we have
\[
\gamma(\lambda \tilde{\lambda} , s)= \gamma(\lambda , s) \cdot \gamma(\tilde{\lambda} , H(\lambda \cdot H^{-1}(s)))
\qquad
\left(\lambda, \tilde{\lambda}\in J, s\in S\right).
\]
Let us substitute $s=1$  and define the functions $h, \kappa$ on $S$ by
\[
h(s)= \frac{H^{-1}(s)}{H^{-1}(1)}
\qquad
\text{and}
\qquad
\kappa(\lambda)= \gamma (\lambda, 1)
\qquad
\left(\lambda\in J, s\in S\right)
\]
to deduce that
\[
\kappa(h(s)\lambda)= \kappa(h(s))\cdot \gamma(\lambda, s)
\qquad
\left(\lambda\in J, s\in S\right),
\]
that is,
\[
\gamma(\lambda, s)= \frac{\kappa(h(s)\lambda)}{\kappa(h(s))}
\]
holds for all $\lambda, s\in S$.
\end{proof}

\begin{rem}\label{cor1}
Let $I, J, S\subset \mathbb{R}$ be  intervals with $1\in I$, $0, 1\in J$, $0, 1\in S$, and $I\cdot J= \left\{ \lambda x\, \vert \, x\in I, \lambda \in J \right\}\allowbreak\subset I$. Let further $\eta \colon J\times S\to S$ and $\phi \colon S\to \mathbb{R}$ be functions and assume that the one-parameter family of functions $\xi_{s}\colon I\to \mathbb{R}\, (s\in S)$ fulfills the following power law
\[
\xi_{s}(\lambda x)= \lambda^{\phi(s)}\xi_{\eta(\lambda, s)}(x)
\qquad
\left(x\in I, \lambda\in J, s\in S\right).
\]
If $\eta$ is a regular multiplicatively translational mapping, then there exists a function $F\colon I\to \mathbb{R}$ such that
\[
\xi_{s}(x)= x^{\phi(s)}F(x\cdot H^{-1}(s))
\qquad
\left(x\in I, s\in S\right).
\]
In this case we automatically have that 
\[
\eta(\lambda, s)= H(\lambda \cdot H^{-1}(s))
\qquad
\text{and}
\qquad
\gamma(\lambda, s)=\lambda^{\phi(s)}
\qquad
\left(\lambda\in J, s\in S\right)
\]
with some function $H\colon J\to S$ such that $H(0)=0$.
Further,
\[
\xi_{s}(\lambda x)= \lambda^{\phi(s)} \xi_{H(\lambda \cdot H^{-1}(s))} (x)
\qquad
\left(x\in I, \lambda\in J, s\in S\right).
\]
With $x=1$, we get that 
\[
\xi_{s}(\lambda )= \lambda^{\phi(s)} \xi_{H(\lambda \cdot H^{-1}(s))} (1)
\qquad
\left(x\in I, \lambda\in J, s\in S\right).
\]
Define the function $F\colon I\to \mathbb{R}$ by
\[
F(x)= \xi_{H(x)}(1)
\qquad
\left(x\in I\right)
\]
to get that
\[
\xi_{s}(x)= x^{\phi(s)} F(x \cdot H^{-1}(s))
\qquad
\left(x\in I, s\in S\right).
\]
\end{rem}

In the statement below we consider the power law
\[
\xi_{s}(\lambda x)= \lambda^{\phi(s)}\xi_{\eta(\lambda, s)}(x)
\qquad
\left(x\in I, \lambda\in J, s\in S\right)
\]
with a \emph{monotonic} function $\phi$.

\begin{cor}\label{cor2}
Let $\phi\colon S\to S$ be a \emph{monotonic} function on the interval $S$. 
Suppose  the one-parameter family of functions $\xi_{s}\colon I\to \mathbb{R}{\setminus \left\{ 0\right\}}\, (s\in S)$ fulfills the following power law
\[
\xi_{s}(\lambda x)= \lambda^{\phi(s)}\xi_{\eta(\lambda, s)}(x)
\qquad
\left(x\in I, \lambda\in J, s\in S\right)
\]
with a regular multiplicatively translational mapping $\eta \colon J\times S \to S$. 
Then there exists a countable partition $(S_{\alpha})_{\alpha\in A}$ of subintervals of $S$ such that
for all $\alpha \in A$, the interval $S_{\alpha}$ cannot have an open subinterval $\tilde{S}$ with positive length such that $\phi$ is \emph{strictly} monotonic on $\tilde{S}$. 
\end{cor}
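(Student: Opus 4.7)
The plan is to exploit the explicit form of $\xi_s$ provided by Remark~\ref{cor1}, namely
\[
\xi_{s}(x) = x^{\phi(s)} F(x \cdot H^{-1}(s)),
\]
together with the hypothesis that $\xi_s$ is nowhere zero, which via $F(x) = \xi_{H(x)}(1)$ implies that $F$ is also nowhere zero, in order to collapse the power law into a constraint purely on $\phi$.

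Substituting this form into both sides of the power law, the left-hand side becomes $(\lambda x)^{\phi(s)} F(\lambda x \cdot H^{-1}(s))$, while on the right-hand side, the key identity $H^{-1}(\eta(\lambda, s)) = \lambda \cdot H^{-1}(s)$ (immediate from $\eta(\lambda, s) = H(\lambda H^{-1}(s))$) gives $\lambda^{\phi(s)} x^{\phi(\eta(\lambda,s))} F(\lambda x \cdot H^{-1}(s))$. Cancelling the common nonzero factor $\lambda^{\phi(s)} F(\lambda x \cdot H^{-1}(s))$ (valid for $\lambda > 0$) yields $x^{\phi(s)} = x^{\phi(\eta(\lambda,s))}$, and since $I$ contains points $x \neq 1$, this forces
\[
\phi(s) = \phi(\eta(\lambda, s)) \qquad \text{for all } s \in S \text{ and all positive } \lambda \in J.
\]

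Setting $\psi := \phi \circ H \colon J \to \mathbb{R}$ and $t = H^{-1}(s) \in J$, the invariance above reads $\psi(\lambda t) = \psi(t)$ whenever $\lambda > 0$ and both $t, \lambda t \in J$. The remark following Lemma~\ref{lem_trans} guarantees $[0, 1] \subset J$ in the nontrivial case, so for any $t_1 \leq t_2$ in $J \cap (0, \infty)$ the ratio $\lambda = t_1/t_2 \in (0, 1] \subset J$ satisfies $\lambda t_2 = t_1 \in J$ and yields $\psi(t_1) = \psi(t_2)$; an entirely analogous argument applies on $J \cap (-\infty, 0)$ when that set is nonempty. Consequently $\psi$, and hence $\phi = \psi \circ H^{-1}$, takes at most three values. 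Since $\phi$ is monotonic on $S$, each of its (at most three) level sets is a subinterval of $S$, furnishing the desired finite, \emph{a fortiori} countable, partition $(S_\alpha)_{\alpha \in A}$; on each piece $\phi$ is constant and therefore admits no subinterval of positive length on which it is strictly monotonic.

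The main technical obstacle I anticipate is ensuring the cancellations in the first step are fully justified: one needs $\lambda x \cdot H^{-1}(s)$ to lie in the domain of $F$ (which follows from $I \cdot J \subset I$), the value $F(\lambda x \cdot H^{-1}(s))$ to be genuinely nonzero (guaranteed by $\xi_s \neq 0$), and the restriction $\lambda > 0$ so that $\lambda^{\phi(s)} \neq 0$. Once those checks are dispatched, the remainder of the argument is essentially a one-line exercise in multiplicative invariance on the interval $J$.
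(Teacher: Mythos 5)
Your proof is correct, but it takes a genuinely different route from the paper's. The paper obtains the key invariance $\phi(s)=\phi(\eta(\lambda,s))$ not from the representation of Remark \ref{cor1} but by applying the power law twice --- once directly to $\xi_{s}(\lambda\tilde{\lambda}x)$ and once iteratively --- and cancelling the common nonzero factor $\xi_{\eta(\tilde{\lambda},\eta(\lambda,s))}(x)$ via the translation equation $\eta(\lambda\tilde{\lambda},s)=\eta(\tilde{\lambda},\eta(\lambda,s))$; that derivation uses only the nonvanishing of $\xi$ and, unlike your substitution of $\xi_{s}(x)=x^{\phi(s)}F(x\cdot H^{-1}(s))$, does not implicitly import the standing hypotheses of Remark \ref{cor1} (in particular $1\in I$). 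From the invariance onward the two arguments diverge again: the paper argues by contradiction, using continuity and strict monotonicity of $\phi$ on a hypothetical subinterval to get injectivity, hence $s=\eta(\lambda,s)$, hence $H^{-1}(s)=\lambda H^{-1}(s)$ and the absurdity $\lambda=1$ for all $\lambda\in J$; you instead exploit $[0,1]\subset J$ (the remark after Lemma \ref{lem_trans}) to show that $\psi=\phi\circ H$ is multiplicatively invariant and therefore constant on $J\cap\,]0,+\infty[$, so that $\phi$ is constant off $\{0\}$. This yields a strictly stronger conclusion than the corollary asserts (a finite partition with $\phi$ constant on each piece), at the modest cost of the extra domain assumptions. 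Two small points to tidy: the level set containing $s=0$ may be a singleton, which by the paper's convention is not an interval and should be absorbed into an adjacent piece; and, as you note, the cancellation requires $\lambda>0$, which is harmless since invariance under $\lambda\in\,]0,1]$ already suffices for the ratio argument.
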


\begin{proof}
If the function  $\phi$ is monotonic on the interval $S$, then the points of discontinuity of the function $\phi$ form a countable subset in $S$. Further this countable set induces a countable partition $(S_{\alpha})_{\alpha\in A}$ of the interval $S$ such that for all $\alpha\in A$, the function $\phi$ is continuous on $S_{\alpha}^{\circ}$ (on the interior of the interval $S_{\alpha}$).

Assume to the contrary that there exists an $\alpha\in A$ such that the interval $S_{\alpha}$ has a proper open subinterval $\tilde{S}$ with positive length such that $\phi$ is strictly monotonic (and due to the construction, continuous) on $\tilde{S}$.

Due to Iverson's law of similarity, we have
\[\xi_s(\lambda \tilde{\lambda} x) = (\lambda \tilde{\lambda})^{\phi(s)} \xi_{\eta(\lambda \tilde{\lambda}, s)}(x)
\qquad
\left(x\in I, \lambda, \tilde{\lambda}\in J, s\in \tilde{S}\right). \]
On the other hand, we also have
\begin{multline*}
\xi_s(\lambda \tilde{\lambda} x)=
\xi_{s}(\lambda (\tilde{\lambda x}))
= \lambda^{\phi(s)} \cdot \xi_{\eta(\lambda, s)}(\tilde{\lambda} x)
\\
= \lambda^{\phi(s)}\cdot \tilde{\lambda}^{\phi(\eta(\lambda, s))} \xi_{\eta(\tilde{\lambda}, \eta(\lambda, s))}(x)
\quad
\left(x\in I, \lambda, \tilde{\lambda}\in J, s\in \tilde{S}\right).
\end{multline*}
Since $\eta$ satisfies the regular multiplicative translation property, we have
\[
(\lambda \tilde{\lambda})^{\phi(s)} = \lambda^{\phi(s)} \cdot \tilde{\lambda}^{\phi(\eta(\lambda, s))},
\]
that is,
\[
\phi(s)= \phi(\eta(\lambda, s))
\]
for all $\lambda\in J, s\in \tilde{S}$.
At the same time, the function $\phi$ was assumed to be continuous and strictly monotonic on $\tilde{S}$. Thus
\[
s= \eta(\lambda, s)
\]
holds for all $\lambda\in J, s\in \tilde{S}$.
Since $\eta$ is a regular multiplicatively translational mapping,
\[
\eta(\lambda, s)= H(\lambda \cdot H^{-1}(s))
\]
holds for all $\lambda\in J, s\in S$ with an appropriate function $H$.
This however means that we have
\[
s= H(\lambda \cdot H^{-1}(s)),
\]
i.e.,
\[
H^{-1}(s)= \lambda \cdot H^{-1}(s)
\]
for all $\lambda\in J, s\in \tilde{S}$ implying that $\lambda =1$ holds for all $\lambda \in J$. This is a contradiction. Thus $\phi$ cannot be strictly monotonic on $\tilde{S}$. 
\end{proof}

\begin{rem}
 Notice that the assumption that the function  $\eta$ is a  \emph{regularly} multiplicative mapping was used only at the end of the proof of Corollary \ref{cor1}. Indeed, to deduce that we have 
 \[
  \phi(s)= \phi(\eta(\lambda, s)), 
 \]
we used only that $\eta$ fulfills $\eta(\lambda \tilde{\lambda}, s)= \eta(\lambda, \eta(\tilde{\lambda}, s))$. 
 In case $\eta(\lambda, s)= s$ for all possible values $\lambda$ and $s$, we have Falmagne's power law for the sensitivity functions $\xi_{s}\, (s\in S)$, i.e.,  
 \[
  \xi_{s}(\lambda x)= \lambda^{\phi(s)} \xi_{s}(x) 
  \qquad 
  \left(x\in I, s\in S\right)
 \]
 holds.
 Recall that $\xi$ is termed to be \emph{weakly balanced} if for all $x\in I$ we have $\xi_{\frac{1}{2}}(x)=x$.  
 According to \cite[Theorem 6]{HsuIverson2016}, if, in addition to the above, $\xi$ admits a weakly balanced  affine representation and $\phi$ is continuous, strictly monotonic and nowhere zero, then $\xi$ has a Fechnerian representation. 
 The mentioned \cite{HsuIverson2016} paper also contains results about the power law 
 \[
  \xi_{s}(\lambda x)= \lambda^{\phi(s)} \xi_{\eta(\lambda, s)}(x) 
  \qquad 
  \left(x\in I, s\in S\right)
 \]
with a multiplicatively translational mapping $\eta$. Recall that in the case where $\eta$ is regularly multiplicatively translational, the conditions of Remark \ref{form_xi} hold. Thus, with the help of Proposition \ref{prop1}, we can specially obtain those sensitivities that satisfy the above power law and have an affine representation. In such a way we deduce Corollary 7 of \cite{HsuIverson2016} without assuming continuous differentiability about the scales $u$ and $\sigma$ and twice differentiability about $\xi$. 
 \end{rem}


According to Remark \ref{cor1}, if the one-parameter family of functions $\xi$ satisfies the power law 
\[
 \xi_{s}(\lambda x)= \lambda^{\phi(s)}\xi_{\eta(\lambda, s)}(x)
\]
with some regular multiplicatively translational mapping $\eta$, then $\xi$ can be written in the  form
\[
 \xi_{s}(x)= x^{\phi(s)}F(x H^{-1}(s)). 
\]
Afterwards, we showed that if $\phi$ is \emph{monotone}, then the interval $S$ cannot have a proper open subinterval on which the function $\phi$ is strictly monotone. This motivates the study of one-parameter families of the form  
\[
 \xi_{s}(x)= x^{r}F(x H^{-1}(s)).
\]

In the following, we examine whether one-parameter families of this form  
can have a subtractive representation. As we shall see, the answer is yes. Note that one-parameter families of this form automatically satisfy the above power law, with the choice $\eta(\lambda, s)=H(\lambda \cdot H^{-1}(s))$ and $\phi(s)= r$. In this case $\eta$ is obviously multiplicatively translational, but not necessarily \emph{regular} multiplicatively translational.

\begin{prop}\label{prop_subtractive}
 Let $I$ and $S$ be real intervals of positive length and suppose that the one-parameter family of functions $\xi_{s}\colon I\to \mathbb{R}\, (s\in S)$  can be represented as 
 \begin{equation}\label{rep1}
  \xi_{s}(x)= x^{r}F(x\cdot H^{-1}(s)) 
  \qquad 
  \left(x\in I, s\in S\right)
 \end{equation}
and also as 
\begin{equation}\label{subtr}
 \xi_{s}(x)= u^{-1}(s+w(x)) 
 \qquad 
 \left(x\in I, s\in S\right)
\end{equation}
with an appropriate {nonzero} real constant $r$, continuous and strictly increasing functions $u, w$ and $H$ and with a continuous function $F$. If the domain of the functions $u, w$ and $H$ and the range of the function $F$ is contained in the set of positive reals, then the following cases are possible. 
\begin{enumerate}[{Case} I]
 \item \[
 \xi_{s}(x)= a  e^{b\rho s} x^{r+\rho}
 \qquad 
 \left(x\in I, s\in S\right)
\]
with appropriate constants $a$, $b$ and $\rho$. 
\item 
\[
 \xi_{s}(x)= a\left(cx^{\frac{r}{\rho}}+s-\varepsilon\right)^{\rho}
 \qquad 
 \left(x\in I, s\in S\right)
\]
with appropriate constants $a, \varepsilon, \rho$ and $c$. 
\end{enumerate}

\end{prop}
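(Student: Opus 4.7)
The plan is to combine the two representations and reduce the resulting functional equation to exactly the form handled by Lemma~\ref{lem_Lundberg}. Equating \eqref{rep1} and \eqref{subtr} and applying the continuous, strictly increasing function $u$ gives $u\bigl(x^r F(xt)\bigr) = H(t) + w(x)$, where $t := H^{-1}(s)$. Since $u$, $w$, $H$, and the range of $F$ are contained in the positive reals, we pass to logarithms by setting $a := \log x$, $b := \log t$, and defining
\[
\bar u(z) := u(e^z), \quad \tilde F(z) := \log F(e^z), \quad \tilde H(z) := H(e^z), \quad \tilde w(z) := w(e^z).
\]
The master equation then reads $\bar u\bigl(ra + \tilde F(a+b)\bigr) = \tilde H(b) + \tilde w(a)$.

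Next, the change of variables $p := a$, $\tilde q := -(a+b)$, together with the identifications $f := \bar u$, $\ell(p) := rp$, $g(\tilde q) := \tilde F(-\tilde q)$, $m(p) := \tilde w(p)$, $h(z) := \tilde H(-z)$, recasts the equation as $f(\ell(p) + g(\tilde q)) = m(p) + h(p+\tilde q)$ on an open connected set. All five functions are continuous; moreover, $\ell$ is philandering because $r \ne 0$, and the strict monotonicity of $w$ and $H$ propagates through the substitutions so that $m$ and $h$ are strictly monotone, hence philandering as well. Therefore Lemma~\ref{lem_Lundberg} applies and yields the five candidate parametric families.

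The final, most delicate step is to impose the very restrictive constraint $\ell(p) = rp$ (linear with zero constant term) on each of Lundberg's five cases. In Cases~II, III, and~V the stated form of $\ell$ is the logarithm of a nontrivial exponential expression; forcing it to reduce to $rp$ either collapses $\ell$ itself to a constant or forces $m$ or $h$ to be constant on an interval of positive length, in contradiction with the philandering hypothesis we have just verified. Thus only Lundberg's Case~I (where $\ell$ is arbitrary) and Case~IV (where $\ell$ is already linear, so its constant term is forced to vanish and the slope $\delta/\kappa$ must equal $r$) survive. In each surviving case the explicit Lundberg formulas yield closed forms for $\bar u$, $\tilde F$, $\tilde H$, $\tilde w$; undoing the logarithmic substitutions recovers $u$, $F$, $H$, $w$ in power/logarithmic form; and substituting into $\xi_s(x) = x^r F(x H^{-1}(s))$ collapses algebraically, after absorbing the Lundberg parameters into the constants of the statement, into the exponential--power form of Case~I (from Lundberg's Case~I) and the power-of-affine form of Case~II (from Lundberg's Case~IV). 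The main obstacle is this final bookkeeping: one must track the Lundberg parameters consistently and verify, in particular, that in Case~IV the key cancellation $x^r \cdot (x^{-\delta})^{1/\kappa} = 1$ occurs precisely because the linearity constraint forces $\delta/\kappa = r$, which is what makes the expression fit the affine-inside-power shape $a(cx^{r/\rho}+s-\varepsilon)^{\rho}$.
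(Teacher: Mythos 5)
Your proposal is correct, and it reaches the two stated families by a genuinely different reduction to Lundberg's equation than the paper uses. The paper applies $\log$ to both representations and then substitutes $y=w(x)$, which places $\log\circ F\circ\exp$ in the role of $f$, $\log\circ u^{-1}$ in the role of $h$, $\log\circ H^{-1}$ in the role of $g$, and $\log\circ w^{-1}$ in the roles of \emph{both} $\ell$ and $m$; the exploitable constraint is then the linear relation $r\ell+m\equiv 0$ between two unknown functions, which kills Lundberg's Cases IV and V immediately but forces a separate, somewhat laborious argument (differentiation in $s$ and linear independence of monomials) to eliminate Lundberg's Case II, with the surviving Cases I and III producing the two answers. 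You instead apply $u$ rather than $u^{-1}$, so that $u\circ\exp$ plays the role of $f$, $w\circ\exp$ the role of $m$, $H\circ\exp(-\cdot)$ the role of $h$, $\log F\circ\exp(-\cdot)$ the role of $g$, and --- crucially --- the \emph{known} function $p\mapsto rp$ the role of $\ell$. Pinning $\ell$ down completely is a stronger and easier-to-use constraint than a relation between two unknowns: it disposes of Lundberg's Cases II, III and V by elementary linear-independence arguments (each would force $\ell$ constant or force $m$ or $h$ constant, contradicting philandering, which you correctly verify from the strict monotonicity of $u$, $w$, $H$ and $r\neq 0$), and the surviving Cases I and IV unwind to exactly the exponential--power and power-of-affine families of the statement; your observation that $\delta/\kappa=r$ produces the cancellation $x^{r\kappa-\delta}=1$ is precisely the step that puts Lundberg's Case IV into the form $a\left(cx^{r/\rho}+s-\varepsilon\right)^{\rho}$. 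The one cosmetic slip is describing $\ell$ in Lundberg's Case III as the logarithm of an exponential expression (it is the logarithm of an affine expression), but the elimination argument you give still applies verbatim. On balance your routing is the cleaner of the two, since it avoids the paper's differentiation step in ruling out the bad case.
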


\begin{proof}
The assumption that the domain of the functions $u, w$ and $H$ and the range of the function $F$ is contained in the set of positive reals guarantees that we can take the logarithm of both the sides of Equation \eqref{rep1}. In this case 
 \begin{align*} 
   \log(\xi_{s}(x))&= r\log(x)+ \log(F(x\cdot H^{-1}(s)))\\
   & =r\log(x)+ \log\circ F\circ \exp(\log(x)+ \log\circ H^{-1}(s))
 \end{align*} 
follows for all $x\in I$ and $s\in S$. 
Further, if we take the logarithm of both the sides of \eqref{subtr}, we deduce 
\[
 \log(\xi_{s}(x))= \log\circ u^{-1}(s+w(x))  
 \qquad 
 \left(x\in I, s\in S\right). 
\]
Therefore we have 
\[
 \log\circ u^{-1}(s+w(x))= r\cdot \log(x)+ \log\circ F\circ \exp(\log(x)+ \log\circ H^{-1}(s)), 
\]
that is, 
\[
  \log\circ u^{-1}(s+y)= r\cdot \log\circ w^{-1}(y)+ \log\circ F\circ \exp(\log\circ w^{-1}(y)+ \log\circ H^{-1}(s))
\]
for all $y\in J$ and $s\in S$, where the interval $J$ denotes the range of the function $w$. 
Let us introduce the functions $h, m, f, \ell$ and $g$ by 
\begin{align*}
 h&=\log \circ u^{-1}\\
 m&= -r\cdot \log w^{-1}\\
 f&= \log \circ F\circ \exp\\
 \ell& = \log \circ w^{-1}\\
 g&= \log \circ H^{-1}
\end{align*}
to obtain that the latter equation takes the form 
\[
 f(\ell(y)+g(s))= m(y)+ h(y+s) 
 \qquad 
 \left(y\in J, s\in S\right). 
\]
Since $h, \ell$ and $m$ are philandering, due to the respective properties of the functions $u$ and $w$, Lemma \ref{lem_Lundberg} can be applied. 
However, let us note that in our case with the above notations, we have $ r\cdot \ell +m \equiv 0$. This shows that Cases IV and V are impossible in our case. 
\begin{enumerate}[{Case} I]
 \item In this case 
 \[
  m(x)= \rho \ell(x)-\rho b x+ \alpha +\rho \beta + \tau,
 \]
at the same time, due to $ r\cdot l +m \equiv 0$, {so 
\[
 -r\ell(x)= \rho \ell(x)-\rho b x+ \alpha +\rho \beta + \tau
\]
holds. From this
\[
 \ell(x)= \frac{\rho b}{r+\rho}x - \frac{\alpha+\rho \beta +\tau}{r+\rho}, 
\]
and 
\[
 w(x)= \frac{r+\rho}{\rho b}\log (x)+ \frac{\alpha+\rho \beta +\tau}{\rho b}
\]
follows. }
Further, 
\[
 h(x)= -\tau +\rho b x, 
\]
so 
\[
 u^{-1}(x)= e^{-\tau}e^{\rho b x}. 
\]
Therefore, 
\[
 \xi_{s}(x)= \exp(\beta \rho+\log(x)\rho+bs\rho+\alpha+r\log(x))
 =
 e^{\beta \rho+\alpha }e^{b\rho s}x^{r+\rho}
\]
holds. 

\item In this case 
\[
 h(x)= -\tau +\alpha+ \rho \log(bc+de^{\delta x}), 
\]
yielding that 
\[
 u^{-1}(x)= e^{-\tau +\alpha}\left(bc+de^{\delta x}\right)^{\rho}
\]
and 
\[
 \ell(x)= -\frac{1}{\kappa}\log \left(\beta +b e^{-\delta x}\right). 
\]
From this, 
\[
 w^{-1}(x)=\left(\beta +b e^{-\delta x}\right) ^{-\frac{1}{\kappa}}, 
\]
that is, 
\[
 w(x)= -\frac{1}{\delta}\log \left(\frac{x^{-\kappa}-\beta}{b}\right)
\]
follows. 
From these, we obtain 
\[
 \xi_{s}(x)= e^{-\tau+\alpha}\left(bc+ d e^{\delta s} \frac{b}{x^{-\kappa}-\beta}\right)^{\rho}
 = 
 e^{\alpha}\left(e^{-\frac{\tau}{\rho}}bc+ de^{-\frac{\tau}{\rho}} e^{\delta s} \frac{b}{x^{-\kappa}-\beta}\right)^{\rho}. 
\]
At the same time, since 
\[
 g(x)= \frac{1}{\kappa}\log(-\beta c+de^{\delta x}), 
\]
we get that 
\[
 H^{-1}(s)= \left(-\beta c+de^{\delta s}\right)^{\frac{1}{\kappa}}. 
\]
Further, 
\[
 f(x)= \alpha+ \rho \log (c+ e^{\kappa x}), 
\]
we obtain that 
\[
 F(x)= e^{\alpha}(c+x^{\kappa})^{\rho}. 
\]
Therefore 
\begin{multline*}
 \xi_{s}(x)= 
 x^re^\alpha((x(d e^{\delta s}-c\beta)^{1/\kappa})^\kappa+c)^\rho
 =
 x^re^\alpha(x^{\kappa}(d e^{\delta s}-c\beta)+c)^\rho
 \\
 =
 e^\alpha(x^{\kappa+\frac{r}{\rho}}(d e^{\delta s}-c\beta)+cx^{\frac{1}{\rho}})^\rho
\end{multline*}

Again, the obtained representations for $\xi$ have to agree everywhere, so 
\[
 e^{\alpha}\left(e^{-\frac{\tau}{\rho}}bc+ de^{-\frac{\tau}{\rho}} e^{\delta s} \frac{b}{x^{-\kappa}-\beta}\right)^{\rho}
 =
 e^\alpha(x^{\kappa+\frac{r}{\rho}}(d e^{\delta s}-c\beta)+cx^{\frac{1}{\rho}})^\rho, 
\]
from which we obtain that 
\[
 e^{-\frac{\tau}{\rho}}bc+ de^{-\frac{\tau}{\rho}} e^{\delta s} \frac{b}{x^{-\kappa}-\beta}
 =
 x^{\kappa+\frac{r}{\rho}}(d e^{\delta s}-c\beta)+cx^{\frac{1}{\rho}}
\]
should hold for all possible $x$ and $s$. Differentiating both sides with respect to the variable $s$, 
\[
 e^{-\frac{\tau}{\rho}} \frac{b}{x^{-\kappa}-\beta}=
 x^{\kappa+\frac{r}{\rho}}
\]
follows, after simplification with $d\delta$. Transforming this further, we get that 
\[
 e^{-\frac{\tau}{\rho}} b =
 x^{\frac{r}{\rho}}-\beta x^{\kappa+\frac{r}{\rho}}. 
\]
{
In other words, 
\[
1\cdot x^{\frac{r}{\rho}}-\beta \cdot  x^{\kappa+\frac{r}{\rho}}-b\cdot e^{-\frac{\tau}{\rho}}=0. 
\]
As monomials corresponding to different powers are linearly independent, this is possible only if some powers are equal, or all the coefficients are zero. Since none of the coefficients can be zero, some powers should be equal. At the same time, $\kappa\neq 0$ and $r\neq 0$ by our assumptions. So this case is impossible.  }
\item In case of the last possibility 
\[
h(x)= \rho \log (bd\alpha x+ \alpha \varepsilon -b\beta) 
\]
so 
\[
 u^{-1}(x)= \left(bd\alpha x+ \alpha \varepsilon -b\right)^{\rho}
\]
and since 
\[
 \ell(x)= -\frac{1}{\kappa}\log (\varepsilon +bdx), 
\]
we have 
\[
 w(x)= \frac{x^{-\kappa}-\varepsilon}{bd}. 
\]
Therefore, 
\[
 \xi_{s}(x)= \left(bd\alpha s+ \alpha x^{-\kappa}-b\beta\right)^{\rho}. 
\]
At the same time, since 
\[
 g(x)= \frac{1}{\kappa}\log \left(\beta -bd x\right), 
\]
we get that 
\[
 H^{-1}(s)= \left(\beta -bd s\right)^{\frac{1}{\kappa}}
 \]
and 
\[
 f(x)= \rho \log \left(\alpha-be^{\kappa x}\right), 
\]
thus 
\[
 F(x)= \left(\alpha -bx^{\kappa}\right)^{\rho}. 
\]
So 
\begin{align*}
 \xi_{s}(x)&= x^{r}\left(\alpha- bx^{\kappa}(\beta-bds)\right)^{\rho}\\
 &=\left(\alpha x^{\frac{r}{\rho}}-b x^{\frac{r}{\rho}+\kappa}(\beta-bds)\right)^{\rho}\\
 &= \left(\alpha x^{\frac{r}{\rho}}-b\beta+ b^{2}ds\right)^{\rho}, 
\end{align*}
where we also used that $\frac{r}{\rho}+\kappa=0$. The latter relationship results from the equation $r\cdot \ell+m\equiv 0$.
Therefore, 
\[
 \xi_{s}(x)= a\left(cx^{\frac{r}{\rho}}+s-\varepsilon\right)^{\rho}
\]
holds with some appropriate constants. 
\end{enumerate}
\end{proof}

Regarding the assumptions of Proposition \ref{prop_subtractive}, note that the domain of the functions $u, w$ and $H$ and the range of the function $F$ will be contained in the set of positive reals if the sets $I$ and $J$ are contained in the set of positive reals and the outputs of $\xi_s$ are positive.  These are reasonable assumptions in models of intensity discrimination.

{
Applying the above proof to the function $w=u$, we obtain the sensitivity functions that have a Fechnerian representation.
}
{
\begin{cor}
 Let $I$ and $S$ be real intervals of positive length and suppose that the one-parameter family of functions $\xi_{s}\colon I\to \mathbb{R}\, (s\in S)$  can be represented as 
 \begin{equation*}
  \xi_{s}(x)= x^{r}F(x\cdot H^{-1}(s)) 
  \qquad 
  \left(x\in I, s\in S\right)
 \end{equation*}
and also as 
\begin{equation}\label{Fec}
 \xi_{s}(x)= u^{-1}(s+u(x)) 
 \qquad 
 \left(x\in I, s\in S\right)
\end{equation}
with an appropriate real constant $r$, continuous and strictly increasing functions $u$ and $H$ and with a continuous function $F$. If the domain of the functions $u$ and $H$ and the range of the function $F$ is contained in the set of positive reals, then 
 \[
 \xi_{s}(x)= e^{\bar{\rho} s} \cdot  x
 \qquad 
 \left(x\in I, s\in S\right)
\]
holds with an appropriate constant $\bar{\rho}$. 
\end{cor}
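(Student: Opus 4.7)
The plan is to follow the proof of Proposition \ref{prop_subtractive} verbatim, but with the additional identification $w=u$ that is forced by the Fechnerian representation \eqref{Fec}. Taking logarithms of both sides of \eqref{rep1} and \eqref{Fec} and setting $h = \ell = \log\circ u^{-1}$, $f = \log\circ F\circ\exp$, $g = \log\circ H^{-1}$, and $m = -r\ell$ exactly as in that earlier proof, I will again obtain the Lundberg equation
\[
f(\ell(y)+g(s)) = m(y)+h(y+s),
\]
together with the identity $r\ell + m \equiv 0$. Lemma \ref{lem_Lundberg} then reduces the possibilities to its five listed cases, and Cases IV and V are ruled out by the same argument as in the proof of Proposition \ref{prop_subtractive}.

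The heart of the argument is Case I of Lundberg, in which $h(x) = -\tau+\rho b x$ is linear. The coincidence $\ell=h$ forces $\ell$ to be linear as well, and combining the Lundberg formula $m(x)=\rho\ell(x)-\rho bx+\alpha+\rho\beta+\tau$ with $m=-r\ell$ yields, after matching coefficients of $x$ and of the constant term, the two relations $r+\rho = 1$ and $\alpha+\rho\beta = 0$. Since $\ell = \log\circ u^{-1}$, this gives $u^{-1}(y) = e^{-\tau+\rho b y}$, hence $u(x) = (\log x + \tau)/(\rho b)$, and substituting into the Fechnerian formula produces
\[
\xi_s(x) = u^{-1}(s+u(x)) = e^{-\tau+\rho b s+\log x+\tau} = e^{b\rho s}\cdot x,
\]
which is the claimed form with $\bar\rho = b\rho$.

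For Cases II and III of Lundberg, the prescribed forms of $h$ and $\ell$ are structurally different, involving exponentials and logarithms with several extra parameters. Imposing the equality $\ell=h$ and comparing asymptotic behavior forces the nonlinear parameters to collapse: in Case II one is led to $\beta=0$ and $c=0$, after which $\ell=h$ becomes a linear function and one is back in Case I. In Case III the equality $\ell = h$ together with $m=-r\ell$ forces $\rho\kappa=-1$, $\alpha=1$, $\beta=0$, $\tau=0$, and $r=1$, producing the candidate $\xi_s(x) = (x^{1/\rho}+bds)^\rho$; a sign analysis then shows that the hypotheses that $u$ and $H$ be continuous and strictly increasing on positive domains and that the range of $F$ lie in the positive reals cannot all be met simultaneously, which eliminates this case. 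Hence only the exponential family from Case~I survives.

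The main obstacle I expect is the careful regularity check in Case III: one must verify that the parameter configuration forced by $\ell=h$ and $m=-r\ell$ cannot simultaneously deliver strictly increasing $H$ and $u$ on $\mathbb{R}_+$ together with a positive-valued continuous $F$ on the relevant domain, so that the only family consistent with all the hypotheses is the exponential one produced in Case~I.
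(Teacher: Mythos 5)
Your overall strategy --- reduce to the subtractive representation with $w=u$, pass to the Lundberg equation, and sort through the five cases of Lemma \ref{lem_Lundberg} --- is the same as the paper's, and your Case I is correct and agrees with the paper's (the relations $r+\rho=1$ and $\alpha+\rho\beta=0$, hence $\xi_s(x)=e^{b\rho s}x$). Cases IV and V are excluded exactly as in Proposition \ref{prop_subtractive}; for Lundberg's Case II you do not need the asymptotic comparison of $\ell$ and $h$ (which is delicate on bounded intervals), since the monomial-independence argument already given in the proof of Proposition \ref{prop_subtractive} eliminates that case irrespective of whether $w=u$.

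The genuine gap is in your Case III. Your parameter bookkeeping there is right: $\ell=h$ and $m=-r\ell$ force $\tau=0$, $\rho\kappa=-1$, $\alpha=1$, $\beta=0$, $r=1$, leaving the candidate $\xi_s(x)=\bigl(x^{1/\rho}+bds\bigr)^{\rho}$ alive. But the ``sign analysis'' you promise in order to discard it is not supplied, and it cannot succeed. Take $\rho=1$ and $bd=1$, so $\xi_s(x)=x+s$; on, say, $I=(10,20)$ and $S=(-2,-1)$ this admits both representations with $u(x)=x$ on $(8,20)$, $H^{-1}(s)=-1/s$ (so $H(t)=-1/t$ is continuous and strictly increasing on the positive interval $(1/2,1)$), $F(t)=1-1/t>0$ on the relevant domain $(5,20)$, and $r=1$. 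Every stated hypothesis is met, yet $x+s$ is not of the form $e^{\bar\rho s}x$ (compare Remark \ref{rem3}, where exactly this family appears). The paper disposes of this case differently: it writes $u^{-1}(x)=(bd\alpha x+\alpha\varepsilon-b)^{\rho}$ and $w(x)=(x^{-\kappa}-\varepsilon)/(bd)$ and observes that $w=u$ forces $b=0$, a contradiction. Note, however, that with $h$ exactly as stated in Lemma \ref{lem_Lundberg} the constant term is $\alpha\epsilon-b\beta$ rather than $\alpha\epsilon-b$, and the same comparison then yields only $b\beta=0$, i.e.\ $\beta=0$ --- precisely the configuration your computation (and the example above) lands on. So before closing Case III you must either justify the paper's formula for $u^{-1}$ or accept that this case contributes an additional solution family; as written, this step of your argument is not valid.
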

}
\begin{proof}
If the one-parameter family of functions $\xi_{s}\colon I\to \mathbb{R}\, (s\in S)$ admits a Fechnerian representation \eqref{Fec}, then it has a subtractive representation \eqref{subtr} with the function $w=u$. Thus, in view of the proof of Proposition \ref{prop_subtractive} we have following cases. 
\begin{enumerate}[{Case} I]
    \item \[ w(x)= \frac{r+\rho}{\rho b}\log (x)+ \frac{\alpha+\rho \beta +\tau}{\rho b} \qquad  \text{and}\qquad  u(x)= \frac{\log(x)}{\rho b}+\frac{\tau}{\rho b}\]
    At the same time $w=u$, which in this case is possible only if $\alpha +\rho b=0$ and $r+\rho=1$ hold. This means however that 
    \[
    \xi_{s}(x)= e^{b\rho s}\cdot x 
    \qquad 
    \left(x\in I, s\in S\right)
    \]
    Letting $\bar{\rho}= b\rho$ we obtain the statement.  
    \item The other possibility is that we have Case II in Proposition \ref{prop_subtractive}, that is, 
    \[
 u^{-1}(x)= \left(bd\alpha x+ \alpha \varepsilon -b\right)^{\rho}
\]
and  
\[
 w(x)= \frac{x^{-\kappa}-\varepsilon}{bd}. 
\]
In this case however 
\[
u(x)= \frac{x^{\frac{1}{\rho}}+b-\alpha \varepsilon}{bd \alpha}. 
\]
As $b\neq 0$ should hold, $w=u$ in this case is impossible. 
\end{enumerate}
\end{proof}

\begin{rem}
 Observe that the mapping $\xi$ obtained in Case I of Proposition \ref{prop_subtractive} corresponds to Case I of Theorem 10 in \cite{DobleHsu2020} if we choose there the function $\eta$ to be 
 \[
  \eta(\lambda, s)= s+ \frac{1}{b}\log(\lambda). 
 \]
 Case II agrees with Case V of Theorem 10 in \cite{DobleHsu2020}.
 The constants are denoted differently here than in the mentioned paper, but the mentioned representations are the same. 

 Regarding Proposition \ref{prop_subtractive}, it is very important to emphasize that while regular multiplicatively translational mappings are considered in the former, we only assume that the mapping in question is multiplicatively translational mapping in the latter. 
 As we saw in the previous remark, Case II of Proposition \ref{prop_subtractive}  corresponds to Case V of Theorem 10 in \cite{DobleHsu2020}. In that case, we have 
 \[
  \eta(\lambda, s)= \lambda^{-\delta}(s-\varepsilon)+\varepsilon. 
 \]
Observe that we have 
\begin{multline*}
 \eta(\lambda, \eta(\tilde{\lambda}, s) )
 = 
 \lambda^{-\delta}(\eta(\tilde{\lambda}, s)-\varepsilon)+\varepsilon
 \\
 =
 \lambda^{-\delta}((\tilde{\lambda}^{-\delta}(s-\varepsilon)+\varepsilon)-\varepsilon)+\varepsilon
 =
 \lambda^{-\delta}\tilde{\lambda}^{-\delta}(s-\varepsilon)+\varepsilon
 \\=
 \left(\lambda\tilde{\lambda}\right)^{-\delta}\cdot (s-\varepsilon)+\varepsilon
 =
 \eta(\lambda \tilde{\lambda}, s)
\end{multline*}
for all possible $\lambda, \tilde{\lambda}$ and $s$. So $\eta$ is multiplicatively translational. Further, we have 
\[
 \eta(1, s)= 1^{-\delta}(s-\varepsilon)+\varepsilon= s 
 \qquad 
 \left(s\in S\right). 
\]
At the same time, the boundary condition $\eta(\lambda, 0)=0$ holds if and only if 
\[
 0= \eta(\lambda, 0)= \lambda^{-\delta}(0-\varepsilon)+\varepsilon= \varepsilon (1-\lambda^{-\delta}), 
\]
that is, if and only if $\varepsilon=0$ or $\delta=0$. Assuming $\varepsilon=0$, we have 
\[
 \eta(\lambda, s)= \lambda^{-\delta}s. 
\]
The case when $\delta$ is negative (i.e. when $-\delta$ is positive) is studied in Corollary \ref{Cor_shift} with $\theta= -\delta$ (see below). 

If $\delta$ is positive (i.e. when $-\delta$ is negative), then $\eta$ is although multiplicatively translational, but not \emph{regular} multiplicatively translational. Indeed, in this case $J= ]0, +\infty[$ and $S= [0, +\infty[$ and for any fixed $0\neq s^{\ast}\in S$, the range of the mapping 
\[
 \lambda \longmapsto \eta(\lambda, s^{\ast})
\]
is $]0, +\infty[ \subsetneq [0, +\infty[$, since $0$ is not contained in the range. So the above mapping is not surjective, indicating that it cannot be bijective either. 
Nevertheless, we have 
\[
 \eta(\lambda, s)= H(\lambda \cdot H^{-1}(s))
\]
for all $\lambda, s\in ]0, +\infty[$ with the function 
$
 H(s)= s^{-\delta} 
 \;
 \left(s\in ]0, +\infty[\right). 
$

Finally, assuming $\delta=0$, we have 
\[
 \eta(\lambda, s)= s
\]
for all possible $\lambda$ and $s$. In this case for all $s^{\ast}\in S$, the mapping
\[
 \lambda \longmapsto \eta(\lambda, s^{\ast})
\]
is constant, i.e., cannot be bijective. Further, there is no function $H\colon J\to S$ such that we would have 
\[
 \eta(\lambda, s)= H(\lambda \cdot H^{-1}(s)) 
 \qquad 
 \left(\lambda \in J, s\in S\right). 
\]
Thus, in this case it makes no sense to talk about a representation of the form \eqref{rep1}. Note, however, that the power model used as a starting point in this part reduces in this case to the model 
\[
 \xi_{s}(\lambda x)= \lambda^{\phi(s)}\xi_{s}(x) 
 \qquad 
 \left(x\in I, s\in S\right). 
\]
\end{rem}

{
\begin{cor}
Let $I, J, S\subset \mathbb{R}$ be  intervals with $I\cdot J= \left\{ \lambda x\, \vert \, x\in I, \lambda \in J \right\}\subset I$ and $0, 1\in J$, $0\in S$. Let further $\eta \colon J\times S\to S$ and $g\colon J\to \mathbb{R}$ be functions and assume that the one-parameter family of functions $\xi_{s}\colon I\to \mathbb{R}\, (s\in S)$ fulfills
\begin{equation}\label{cor3_eq}
\xi_{s}(\lambda x)= g(\lambda)\xi_{\eta(\lambda, s)}(x)
\qquad
\left(x\in I, \lambda\in J, s\in S\right).
\end{equation}
Assume that the function $g\colon J\to \mathbb{R}$ is measurable on some subinterval $\tilde{J}$ of $J$ with positive length. Suppose further that $\eta$ is a regular multiplicatively translational mapping, that is 
\[
\eta(\lambda, s)= H(\lambda \cdot H^{-1}(s)) 
\qquad 
\left(\lambda \in J, s\in S\right)
\]
holds with some bijection $H\colon J\to S$ satisfying $H(0)=0$. 
Then there exist a real number $\alpha$, a positive real number $\mu$ and a function $F\colon I\to \mathbb{R}$ such that
\[
g(\lambda)= \alpha \lambda^{\mu}
\qquad
\text{and}
\qquad
\xi_{s}(x)= \alpha x^{\mu}F(x\cdot H^{-1}(s))
\qquad
\left(x\in I, s\in S\right).
\]
Conversely, if we consider the mappings $g\colon J\to \mathbb{R}$, $\eta\colon J\times S\to S$ and $\xi_{s}\colon I\to \mathbb{R}\, (s\in S)$ defined by 
\[
g(\lambda)= \alpha \lambda^{\mu}\;, \qquad 
\eta(\lambda, s)=  H(\lambda \cdot H^{-1}(s)) 
\qquad 
\left(\lambda \in J, s\in S\right)
\]
and 
\[
\xi_{s}(x)= \alpha x^{\mu}F(x\cdot H^{-1}(s))
\qquad
\left(x\in I, s\in S\right), 
\]
where $\alpha \in \mathbb{R}$, $\mu \in ]0, +\infty[$, $H\colon J\to S$ is a bijection with $H(0)=0$ and $F\colon I\to \mathbb{R}$, then identity \eqref{cor3_eq} is fulfilled for all $x\in I, \lambda \in J$ and $s\in S$ only if $\alpha\in \left\{ 0, 1\right\}$. 
\end{cor}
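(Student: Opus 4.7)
The plan is to adapt the iterated-substitution technique already used in the proofs of Proposition \ref{prop2} and Corollary \ref{cor2}, and then combine it with the representation of Remark \ref{form_xi}. First, I would substitute $\tilde{\lambda} x$ in place of $x$ in \eqref{cor3_eq} and apply the identity twice, and compare the result with a single application in which $\lambda\tilde{\lambda}$ plays the role of $\lambda$. Since $\eta$ is multiplicatively translational, the two resulting subscripts coincide, yielding
\[
g(\lambda)g(\tilde{\lambda})\,\xi_{\eta(\lambda\tilde{\lambda},s)}(x)=g(\lambda\tilde{\lambda})\,\xi_{\eta(\lambda\tilde{\lambda},s)}(x)
\qquad
\left(\lambda,\tilde{\lambda}\in J,\; s\in S,\; x\in I\right).
\]
Unless $\xi$ is identically zero (in which case the statement holds trivially with $\alpha=0$), some $(s,x)$ makes $\xi_{\eta(\lambda\tilde{\lambda},s)}(x)\neq 0$, and cancelling gives the multiplicative Cauchy equation $g(\lambda\tilde{\lambda})=g(\lambda)g(\tilde{\lambda})$ on $J$.

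Because $g$ is measurable on a subinterval $\tilde{J}\subset J$ of positive length, the classical result on measurable solutions of the multiplicative Cauchy equation forces $g(\lambda)=\lambda^{\mu}$ for some real $\mu$ on the positive part of $J$ (or $g\equiv 0$). Evaluating at $\lambda=0\in J$ gives $g(0)=g(0)^{2}$, hence $g(0)\in\{0,1\}$: the value $1$ would propagate to $g\equiv 1$, while $g(0)=0$ combined with $\mathbb{R}$-valuedness at $0$ forces $\mu>0$. In every nontrivial case we therefore obtain $g(\lambda)=\alpha\lambda^{\mu}$ with $\alpha\in\{0,1\}$ and $\mu\in\,]0,+\infty[$, as claimed.

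To exhibit the form of $\xi_{s}$, I would exploit that $1\in J$ and $H$ is a bijection, so that $s^{*}:=H(1)\in S$ satisfies $H^{-1}(s^{*})=1$. Putting $s=s^{*}$ into \eqref{cor3_eq} and using $\eta(\lambda,s^{*})=H(\lambda)$ gives $\xi_{s^{*}}(\lambda x)=\alpha\lambda^{\mu}\xi_{H(\lambda)}(x)$. Replacing $\lambda$ by $H^{-1}(s)\in J$ and solving for $\xi_{s}(x)$ yields, when $\alpha\neq 0$ and $H^{-1}(s)\neq 0$,
\[
\xi_{s}(x)=\frac{\xi_{s^{*}}\bigl(H^{-1}(s)\cdot x\bigr)}{\alpha\,(H^{-1}(s))^{\mu}}\,.
\]
Defining $F(y):=\xi_{s^{*}}(y)/(\alpha^{2}y^{\mu})$ then rewrites this as $\xi_{s}(x)=\alpha x^{\mu}F(x\cdot H^{-1}(s))$; the boundary case $s=0$ (for which $H^{-1}(s)=0$) and the trivial case $\alpha=0$ are handled separately.

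The converse is a direct substitution check: under the three postulated forms the left-hand side of \eqref{cor3_eq} reads $\alpha\lambda^{\mu}x^{\mu}F(\lambda x H^{-1}(s))$ and the right-hand side reads $\alpha\lambda^{\mu}\cdot\alpha x^{\mu}F(x\cdot H^{-1}(\eta(\lambda,s)))=\alpha^{2}\lambda^{\mu}x^{\mu}F(\lambda x H^{-1}(s))$, using that $H^{-1}(\eta(\lambda,s))=\lambda H^{-1}(s)$ is built into the definition of $\eta$. Agreement of the two sides for all admissible arguments (and nontrivial $F$) is thus equivalent to $\alpha^{2}=\alpha$, i.e., $\alpha\in\{0,1\}$. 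I expect the main obstacle to be the careful bookkeeping around the point $\lambda=0\in J$ (where multiplicativity degenerates and $g$ must remain $\mathbb{R}$-valued) and the exhibition of an $(s,x)$ with $\xi_{\eta(\lambda\tilde{\lambda},s)}(x)\neq 0$ for the cancellation step; beyond that, everything reduces to standard facts about the measurable multiplicative Cauchy equation and to the representation already established in Remark \ref{form_xi}.
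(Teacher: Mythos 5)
Your argument is correct and reaches the stated conclusions, but it routes the key functional-equation step differently from the paper. The paper first invokes Proposition \ref{prop2} (which is formulated for a general $\gamma(\lambda,s)$) to obtain the \emph{Pexider} equation $\kappa(\lambda s)=g(\lambda)\kappa(s)$ in the two unknown functions $\kappa$ and $g$ on the region $J\times S$, and then appeals to the Chudziak--Sobek theorem on Pexider equations on a region to get $g(\lambda)=\alpha\lambda^{\mu}$; the restriction $\alpha\in\{0,1\}$ only surfaces afterwards, in the converse check. You instead exploit from the outset that $\gamma(\lambda,s)=g(\lambda)$ does not depend on $s$, so the double-application identity collapses to the ordinary multiplicative \emph{Cauchy} equation $g(\lambda\tilde{\lambda})=g(\lambda)g(\tilde{\lambda})$ in the single unknown $g$, for which the classical measurability theorem suffices. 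This is more elementary (no region-Pexider lemma needed) and has the bonus of yielding $\alpha\in\{0,1\}$ already in the forward direction, together with a cleaner analysis of the degenerate point $\lambda=0$ and of the $g\equiv 1$ case (where, incidentally, the paper's claim that $\mu$ can be taken positive is strained). Your construction of $F$ also differs cosmetically: the paper sets $x=1$ and $F(x)=\xi_{H(x)}(1)$ via Remark \ref{cor1} (implicitly using $1\in I$), whereas you set $s=H(1)$, substitute $\lambda=H^{-1}(s)$, and divide by $y^{\mu}$, which forces you to treat $s=0$ separately; both are legitimate. One caveat you flag but do not fully close: cancelling $\xi_{\eta(\lambda\tilde{\lambda},s)}(x)$ requires, for \emph{each} pair $(\lambda,\tilde{\lambda})$, an index in the range of $s\longmapsto\eta(\lambda\tilde{\lambda},s)$ at which $\xi$ does not vanish, and this range can be a proper subset of $S$ (it degenerates to $\{0\}$ when $\lambda\tilde{\lambda}=0$). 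The paper sidesteps this only because Proposition \ref{prop2} assumes $\xi_{s}$ takes values in $\mathbb{R}\setminus\{0\}$, a hypothesis the corollary's statement drops; so this gap is inherited from the paper rather than introduced by you, and is repaired by adding the nowhere-vanishing assumption.
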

}

\begin{proof}
Due to Proposition \ref{prop2}, we have
\[
\frac{\kappa(h(s)\lambda)}{\kappa(h(s))}= g(\lambda)
\qquad
\left(\lambda\in J, s\in S\right),
\]
that is,
\[
\kappa(h(s)\lambda)= \kappa(h(s))g(\lambda)
\qquad
\left(\lambda\in J, s\in S\right),
\]
or, in other words,
\[
\kappa (\lambda s)= g(\lambda)\kappa(s)
\qquad
\left(\lambda\in J, s\in S\right).
\]
This equation is a multiplicative Pexider equation {on the set $J\times S$}. Further the function $g$ is measurable on some subinterval $\tilde{J}$ of $J$ with positive length. {Thus by \cite{ChuSob17},} the functions $\kappa$ and $g$ are constant multiples of a power function. Therefore, there exist real numbers $\alpha$ and $\mu$ such that
\[
g(\lambda)= \alpha \lambda^{\mu}
\qquad
\left(\lambda\in J\right)
\]
and {by Remark \ref{cor1} we have }
\[
\xi_{s}(x)=
\alpha x^{\mu}F(x\cdot H^{-1}(s))
\qquad
\left(x\in I, s\in S\right).
\]
{
Conversely, let us consider the mappings $g\colon J\to \mathbb{R}$, $\eta\colon J\times S\to S$ and $\xi_{s}\colon I\to \mathbb{R}\, (s\in S)$ defined by 
\[
g(\lambda)= \alpha \lambda^{\mu}\;, \qquad 
\eta(\lambda, s)=  H(\lambda \cdot H^{-1}(s)) 
\qquad 
\left(\lambda \in J, s\in S\right)
\]
and 
\[
\xi_{s}(x)= \alpha x^{\mu}F(x\cdot H^{-1}(s))
\qquad
\left(x\in I, s\in S\right), 
\]
where $\alpha \in \mathbb{R}$, $\mu \in ]0, +\infty[$, $H\colon J\to S$ is a bijection with $H(0)=0$ and $F\colon I\to \mathbb{R}$. Suppose that identity \eqref{cor3_eq} is fulfilled for all $x\in I, \lambda \in J$ and $s\in S$. 
Then 
\[
\xi_{s}(\lambda x)= \alpha (\lambda x)^{\mu}\cdot F(\lambda x \cdot H^{-1}(s)) 
\qquad 
\left(x\in I, \lambda \in J, s\in S\right)
\]
and 
\begin{align*}
g(\lambda)\xi_{\eta(\lambda, s)}(x)&
=
\alpha \lambda^{\mu}\alpha x^{\mu}F(x \cdot H^{-1}(H(\lambda \cdot H^{-1}(s))))\\
&= \alpha^{2} (\lambda x)^{\mu} F(\lambda x \cdot H^{-1}(s))
\qquad (x\in I, \lambda \in J, s\in S). 
\end{align*}
From this however $\alpha= \alpha^{2}$, i.e., $\alpha \in \left\{ 0, 1\right\}$ follows. }
\end{proof}

The cases when $\eta$ is not regular multiplicatively translational, but only multiplicatively translational, can be important from the point of view of applications, 
see \cite{PavelIverson1981} and \cite{HsuIverson2016}.


We would also like to illustrate this case with an example below. Before all this, however, we examine the case when
\[
\eta(\lambda, s)= \lambda^{\theta} s
\qquad
\left(\lambda \in J, s\in S\right).
\]
with a positive $\theta$.

\begin{cor}\label{Cor_shift}
Let $I, J, S\subset \mathbb{R}$ be  intervals with $I\cdot J= \left\{ \lambda x\, \vert \, x\in I, \lambda \in J \right\}\subset I$ and $0, 1\in J$, $0\in S$. Let further $\theta>0$ be arbitrarily fixed, $\gamma \colon J\times S\to \mathbb{R}$ be a function and assume that the one-parameter family of functions $\xi_{s}\colon I\to \mathbb{R}\, (s\in S)$ fulfills the following shift invariance
\[
\xi_{s}(\lambda x)= \gamma(\lambda, s)\xi_{\lambda^{\theta}s}(x)
\qquad
\left(x\in I, \lambda\in J, s\in S\right).
\]
Then there exist functions $f, F\colon I\to \mathbb{R}$ such that
\[
\gamma(\lambda, s)= \frac{f(\lambda s^{\frac{1}{\theta}})}{f(s^{\frac{1}{\theta}})}
\quad
\text{and}
\quad
\xi_{s}(x)= \frac{f(x s^{\frac{1}{\theta}}) }{f(s^{\frac{1}{\theta}})} \cdot F(x^{\theta} s)
\quad
\left(x\in I, \lambda\in J, s\in S\right),
\]
except for those points $s\in S$ for which we have $f(s^{\frac{1}{\theta}})=0$.
\end{cor}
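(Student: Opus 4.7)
The plan is to reduce the corollary to Proposition~\ref{prop2} (to extract the form of $\gamma$) and then to use the fact that $t:=x^\theta s$ is invariant under the substitution $(x,s)\mapsto(\lambda x,\lambda^\theta s)$ built into the equation to extract the form of $\xi$.

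I would first check that $\eta(\lambda,s)=\lambda^\theta s$ is a regular multiplicatively translational mapping: the boundary conditions $\eta(1,s)=s$ and $\eta(\lambda,0)=0$ are immediate, and $\eta(\lambda\tilde\lambda,s)=\eta(\tilde\lambda,\eta(\lambda,s))$ follows from $(\lambda\tilde\lambda)^\theta s=\tilde\lambda^\theta(\lambda^\theta s)$. Moreover, for any nonzero $s^{\ast}\in S$, the mapping $\lambda\mapsto\lambda^\theta s^{\ast}$ is a bijection, and one has the representation $\eta(\lambda,s)=H(\lambda\cdot H^{-1}(s))$ with $H(\lambda)=\lambda^\theta$ (so $H^{-1}(s)=s^{1/\theta}$ and $H(0)=0$). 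Applying Proposition~\ref{prop2} then yields
\[
\gamma(\lambda,s)=\frac{\kappa(h(s)\lambda)}{\kappa(h(s))},\qquad h(s)=\frac{H^{-1}(s)}{H^{-1}(1)}=s^{1/\theta},
\]
with $\kappa(\lambda):=\gamma(\lambda,1)$. Setting $f:=\kappa$ delivers the first claimed identity. If the reference value $1$ is not available in $S$, one may fix any nonzero $s_0\in S$, carry out the analogous Pexider-type derivation starting from $\gamma(\lambda\tilde\lambda,s_0)=\gamma(\lambda,s_0)\gamma(\tilde\lambda,\lambda^\theta s_0)$, and obtain the same ratio after a constant rescaling of $f$.

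Next, I would substitute this form of $\gamma$ back into the shift invariance to get
\[
f(s^{1/\theta})\,\xi_s(\lambda x)=f(\lambda s^{1/\theta})\,\xi_{\lambda^\theta s}(x),
\]
and introduce the auxiliary family
\[
G_s(x):=\frac{f(s^{1/\theta})\,\xi_s(x)}{f(xs^{1/\theta})},
\]
defined wherever the denominator is nonzero. The displayed identity is then equivalent to $G_s(\lambda x)=G_{\lambda^\theta s}(x)$. For any $s$ and any admissible $\lambda\in J$ reducing $\lambda^\theta s$ to a fixed reference $s_0$ (i.e.\ $\lambda=(s_0/s)^{1/\theta}$), this gives $G_s(x)=G_{s_0}\bigl(x(s/s_0)^{1/\theta}\bigr)$, so $G_s(x)$ depends only on the invariant $x^\theta s$. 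Defining $F(t):=G_{s_0}\bigl((t/s_0)^{1/\theta}\bigr)$ yields $G_s(x)=F(x^\theta s)$, and unwinding the definition of $G$ produces the desired representation of $\xi_s(x)$.

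The main obstacle I anticipate is not conceptual but bookkeeping: ensuring that the auxiliary $\lambda=(s_0/s)^{1/\theta}$ lies in $J$ for all $s$ of interest (which can fail if $J$ is a proper subinterval of $]0,+\infty[$) and handling the points at which $f(s^{1/\theta})=0$. The first issue is resolved by chaining finitely many admissible substitutions along connected components, using the relation $G_s(\lambda x)=G_{\lambda^\theta s}(x)$ to stitch together the argument. The second issue is exactly the exception already flagged in the statement of the corollary.
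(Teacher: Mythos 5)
Your proposal is correct and follows essentially the same route as the paper: recognize $\eta(\lambda,s)=\lambda^{\theta}s$ as regular multiplicatively translational via $H(\lambda)=\lambda^{\theta}$, invoke Proposition \ref{prop2} to obtain $\gamma(\lambda,s)=f(\lambda s^{1/\theta})/f(s^{1/\theta})$, and substitute this back into the shift invariance to read off $\xi_{s}(x)=\frac{f(xs^{1/\theta})}{f(s^{1/\theta})}\,F(x^{\theta}s)$. The only cosmetic difference is that you normalize with a reference parameter value $s_{0}$ (and explicitly track the zero set of $f$ and the admissibility of $\lambda=(s_{0}/s)^{1/\theta}$), whereas the paper, as in Remark \ref{cor1}, effectively normalizes at $x=1$ and takes $F(u)=\xi_{u}(1)$.
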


\begin{proof}
Consider the function $H$ defined on $J$ by
\[
H(s)= s^{\theta}
\qquad
\left(s\in S\right).
\]
Then
\[
H(\lambda \cdot H^{-1}(s))= \lambda^{\theta}s
\]
holds for all $\lambda\in J, s\in S$. This means that Proposition \ref{prop2} applies with $h(s)= s^{\frac{1}{\theta}}$ and we have  
\[
\gamma(\lambda, s)= \frac{f(\lambda s^{\frac{1}{\theta}})}{f(s^{\frac{1}{\theta}})}
\]
for all $\lambda\in J, s\in S\setminus \left\{ 0\right\}$. Writing this representation back to the shift-invariance property, we get that
\[
\xi_{s}(\lambda x) = \frac{f(\lambda s^{\frac{1}{\theta}})}{f(s^{\frac{1}{\theta}})} \xi_{\lambda^{\theta}s}(x)
\]
for all $x\in I$ and $\lambda\in J, s\in S\setminus \left\{ 0\right\}$, yielding that
\[
\xi_{s}(x)= \frac{f(x s^{\frac{1}{\theta}}) }{f(s^{\frac{1}{\theta}})} \cdot F(x^{\theta} s)
\qquad
\left(x\in I, s\in S\setminus \left\{ 0\right\}\right)
\]
with an appropriate function $F$.
\end{proof}

In the following statement, we will consider the special shift-invariance corresponding to $\theta=-1$ in the work \cite{IversonPavel1981}. Note that, due to the comments above, Corollary \ref{Cor_shift} cannot be applied in this case, since the mapping
\[
\eta(\lambda, s)= \frac{s}{\lambda}
\]
is not \emph{regular} multiplicatively translational, even though it has the multiplicatively translational property. Accordingly, other ideas and tools are needed to investigate this case.

\begin{prop}
{Let $S, I\subset [0, +\infty[$ and $J\subset ]0, +\infty[$ be intervals such that
$0\in S$, $I\cdot J\subset I$ and $\left\{\frac{s}{\lambda}\, \vert \, s\in S, \lambda\in J \right\}\subset S$.} If the one-parameter family of functions $\xi_{s}\colon I\to \mathbb{R}\, (s\in S)$ fulfills
\[
\xi_{s}(\lambda x)= \lambda \xi_{\frac{s}{\lambda}}(x)
\qquad
\left(x\in I, \lambda\in J, s\in S\right),
\]
then, there exists a real constant $c$ and a function $\varphi\colon \mathbb{R}\to \mathbb{R}$ such that
\[
\xi_{s}(x)=
\begin{cases}
cx, & \text{ if }\, s=0\\
s\cdot \varphi\left(\frac{x}{s}\right), & \text{ if }\, s\neq 0
\end{cases}
\qquad
\left(x\in I, s\in S\right).
\]
\end{prop}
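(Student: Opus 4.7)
The plan is to split the analysis according to whether $s=0$ or $s\neq 0$, and in each case extract the advertised structure directly from the functional equation.

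Setting $s=0$ in the functional equation (and noting that $0/\lambda = 0$ since $\lambda \in J \subset \,]0,+\infty[$) gives the homogeneity relation $\xi_0(\lambda x) = \lambda\, \xi_0(x)$ for all $\lambda \in J$, $x \in I$. Fixing any $x_0 \in I\setminus\{0\}$ and setting $c := \xi_0(x_0)/x_0$, the identity immediately yields $\xi_0(x) = cx$ whenever $x$ is of the form $\lambda x_0$ with $\lambda \in J$. To extend this to all of $I$ I would use a chaining argument: since $J$ is an interval of positive length, the multiplicative group generated by $J$ is all of $\,]0,+\infty[\,$, so any ratio $x/x_0$ is a finite product of elements of $J$ and their reciprocals, while the hypothesis $I\cdot J \subset I$ (together with its counterpart $S\cdot J^{-1}\subset S$) ensures that the intermediate products remain in $I$.

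For $s \in S\setminus\{0\}$ the idea is to divide the functional equation by $s$ and rewrite it as
\[
\frac{\xi_s(\lambda x)}{s} \;=\; \frac{\xi_{s/\lambda}(x)}{s/\lambda}.
\]
Define $\Phi(y,s) := \xi_s(y)/s$ on $I\times (S\setminus\{0\})$; the above identity then reads $\Phi(\lambda x, s) = \Phi(x, s/\lambda)$. The heart of the argument is to show that $\Phi(y,s)$ depends only on the ratio $y/s$. Indeed, if $(y_1,s_1)$ and $(y_2,s_2)$ satisfy $y_1/s_1 = y_2/s_2 =: t$ and $\lambda := s_1/s_2 \in J$, then applying the identity with this $\lambda$ and $x = y_2$ produces $\lambda y_2 = s_1 t = y_1$ and $s_1/\lambda = s_2$, so $\Phi(y_1,s_1) = \Phi(y_2,s_2)$. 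For general $s_1, s_2 \in S\setminus\{0\}$ one chains them through a finite sequence $s_1 = \sigma_0, \sigma_1, \dots, \sigma_n = s_2$ with $\sigma_{k+1}/\sigma_k \in J$ at each step; such a chain exists because $JJ^{-1}$ is an interval containing $1$ in its interior, and one checks that the accompanying points $\sigma_k t$ stay inside $I$ since they lie between $y_1$ and $y_2$ and $I$ is an interval.

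Once the claim is established, defining $\varphi(t) := \Phi(y,s)$ for any representative $(y,s)$ with $y/s = t$ gives a well-defined function on $T := \{y/s : y\in I,\, s\in S\setminus\{0\}\}$. Extending $\varphi$ arbitrarily to the complement of $T$ in $\mathbb{R}$ then yields $\xi_s(x) = s\,\varphi(x/s)$ for every $s \in S\setminus\{0\}$ and $x\in I$. Combined with the case $s=0$ treated above, this gives the claimed piecewise formula. A direct substitution back into the functional equation (using $\lambda\cdot (s/\lambda)\,\varphi(x/(s/\lambda)) = s\,\varphi(\lambda x/s)$ when $s\neq 0$, and $cx$-linearity when $s=0$) confirms the solution.

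The principal obstacle is the chaining argument used both in the case $s=0$ (to show $c$ is globally well-defined) and in proving that $\Phi(y,s)$ depends only on $y/s$. The delicate point is that $J$ need not contain $1$, so one must combine two applications of the functional equation to cover ratios near $1$, and simultaneously ensure that all intermediate pairs $(\sigma_k,\sigma_k t)$ stay inside $(S\setminus\{0\}) \times I$; this is exactly where the interval structure of $I,J,S$ together with the closure hypotheses $I\cdot J\subset I$ and $S\cdot J^{-1}\subset S$ is used.
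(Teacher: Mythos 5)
Your overall strategy is the same as the paper's: handle $s=0$ via the resulting homogeneity relation, and for $s\neq 0$ show that $\xi_{s}(x)/s$ depends only on the ratio $x/s$. The paper does the second step in one stroke: it fixes $s^{\ast}\neq 0$ and substitutes $\lambda=s^{\ast}/s$ to get $\xi_{s}(x)=\frac{s}{s^{\ast}}\,\xi_{s^{\ast}}\!\left(\frac{s^{\ast}}{s}x\right)$, then reads off $\varphi(y)=\frac{1}{s^{\ast}}\xi_{s^{\ast}}(s^{\ast}y)$; this tacitly assumes $s^{\ast}/s\in J$ for every $s\in S\setminus\{0\}$. Your chaining argument is an attempt to remove that tacit assumption, which is a reasonable instinct, but the chain as you describe it does not close.

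The problem sits in the two-step move used to realize a ratio in $JJ^{-1}$. The elementary moves supplied by the functional equation are $(y,\sigma)\mapsto(\lambda y,\lambda\sigma)$, allowed only when $\lambda\sigma\in S$ (the hypothesis gives $S\cdot J^{-1}\subset S$, not $S\cdot J\subset S$), and $(y,\sigma)\mapsto(y/\lambda,\sigma/\lambda)$, allowed only when $y/\lambda\in I$. To move by a ratio $\lambda/\mu\in JJ^{-1}$ close to $1$ you must pass through an intermediate point displaced by a factor $\lambda\in J$ itself, and that intermediate point is \emph{not} between $(y_{1},s_{1})$ and $(y_{2},s_{2})$; your remark that the accompanying points ``lie between $y_{1}$ and $y_{2}$'' covers the endpoints of each two-step move but not its midpoint. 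Concretely, take $I=[1,+\infty[$, $J=[2,3]$, $S=[0,1]$: all hypotheses hold, yet the pair $(x,s)=(1.4,\,0.7)$ admits no move at all, since $1.4/\lambda<1$ and $0.7\lambda>1$ for every $\lambda\in[2,3]$; in fact $\xi_{0.7}(1.4)$ occurs on neither side of the functional equation for any admissible $(x,\lambda,s)$, so its value can be prescribed arbitrarily and the asserted conclusion fails for this domain configuration. So the gap is genuine, but it is inherited from the statement itself rather than peculiar to your write-up: the paper's one-line substitution presupposes exactly the extra hypothesis (all ratios $s^{\ast}/s$ lying in $J$, e.g.\ $J=]0,+\infty[$) under which both your argument and the paper's go through.
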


\begin{proof}
With $s=0$ we obtain the multiplicative Pexider equation
\[
\xi_{0}(\lambda x)= \lambda \xi_{0}(x)
\qquad
\left(x\in I, \lambda\in J\right).
\]
Thus
\[
\xi_{0}(x)= cx
\qquad
\left(x\in I\right)
\]
with an appropriate real constant $c$.
Further, with a fixed $s^{\ast}\neq 0$ we deduce that  
\[
\xi_{s^{\ast}}(\lambda x)= \lambda \xi_{\frac{s^{\ast}}{\lambda}}(x)
\qquad
\left(x\in I, \lambda\in J\right).
\]
In other words,
\[
 \xi_{s}(x)= \frac{s}{s^{\ast}}\xi_{s^{\ast}}\left(\frac{s^{\ast}}{s}x\right)
\]
holds for all $x\in I$ and $s\in S\setminus \left\{ 0\right\}$. Thus, if we consider the function $\varphi$ defined on $I$ by 
\[
 \varphi(x)= \frac{1}{s^{\ast}}\xi_{s^{\ast}}(s^{\ast}x) 
 \qquad 
 \left(x\in I\right), 
\]
it can be seen that the statement of the proposition is fulfilled, indeed. 
\end{proof}

In summary, in this subsection we examined the following. Suppose that there is a one-parameter family of functions $\xi$ that satisfies Iverson's law of similarity. In addition, suppose that the function $\eta$ is regular multiplicative translational. The statements above tell us that in this case both the functions $\xi$ and $\gamma$ have some special form. The question of which of these one-parameter families $\xi$ satisfy some psychophysical representations will be answered in one of our future papers.

Finally, we also compare the current results with the results of \cite{DobleHsu2020}. In that paper those sensitivity functions that satisfy Iverson's law of similarity and also admit a subtractive representation were determined.
Note that this automatically entails the fulfillment of certain regularity conditions, which we did not assume in this paper.

Nevertheless, it may be interesting to raise the question of which of the functions $\eta$ in  \cite{DobleHsu2020} have the regular multiplicatively translational property. As we will see below, the boundary conditions appearing in the definition of this notion are quite restrictive.

Indeed, an easy computation shows that all the functions (these appear in Cases II, IV, and V in \cite{DobleHsu2020})
\[
\begin{array}{rcl}
\eta_{1}(\lambda, s)&=& -\dfrac{1}{\kappa}\log \left(\lambda^{-\delta}(e^{-\kappa s}-\beta)+\beta \right) \\
\eta_{2}(\lambda, s)&=& s+\dfrac{\delta}{\kappa}\log(\lambda)\\
\eta_{3}(\lambda, s)&=& \lambda^{-\delta}(s-\varepsilon)+\varepsilon
\end{array}
\]
fulfill the equation
\[
\eta(\lambda \tilde{\lambda}, s)= \eta(\lambda, \eta(\tilde{\lambda}, s))
\qquad
\left(\lambda \in J, s\in J\right).
\]
However, the boundary conditions in the definition are satisfied only if
\begin{itemize}
\item $\beta =1$ or $\delta =0$ in case of the function $\eta_{1}$, i.e.,
\[
\eta_{1}(\lambda, s)= -\dfrac{1}{\kappa} \log \left(\frac{e^{-\kappa s}-1}{\lambda^{\delta}}+1\right)
\;
\text{or}
\;
\eta_{1}(\lambda, s)=s
\quad
\left(\lambda \in J, s\in S\right).
\]
\item $\delta =0$ in case of the function $\eta_{2}$, i.e.,
\[
\eta_{2}(\lambda, s)= s
\qquad
\left(\lambda \in J, s\in S\right).
\]
\item $\delta=0$ or $\varepsilon =0$ in case of the function $\eta_{3}$, i.e.,
\[
\eta_{3}(\lambda, s)= s
\quad
\text{or}
\quad
\eta_{3}(\lambda, s)= s\lambda^{-\delta}
\qquad
\left(\lambda \in J, s\in S\right).
\]
\end{itemize}
Note that these are the multiplicatively translational mappings, of which the function
\[
\eta(\lambda, s)=s
\]
is \emph{not} regular. In this case, however, Iverson's law of similarity is a multiplicative type equation. Thus if we assume e.g.~that for all fixed $s\in S$, $\eta$ is monotonic on $I$, then we obtain that
\[
\xi_{s}(x)= c(s)x^{\varphi(s)}
\qquad
\left(x\in I, s\in S\right)
\]
with some appropriate functions $c, \varphi \colon S\to \mathbb{R}$.

Further, the mapping
\[
\eta(\lambda, s)= s \cdot \lambda^{-\delta}
\]
is regular multiplicatively translational, and this case corresponds to the shift-invariance dealt with in Corollary \ref{Cor_shift}.

Finally, if  we consider only regular multiplicatively translational mappings, then in the first case of $\eta_{1}$ we have
\[
H(s)= -\dfrac{1}{\kappa} \log \left(\frac{\alpha}{s^{\delta}}+1\right)
\qquad
\left(s\in S\right).
\]
Since now we have $[0, 1]\subset J$, the parameter $\delta$ must be negative. So we may write $-\delta$ in place of $\delta$ (with some positive $\delta$). Further, we set $\kappa= -\log(\alpha+1)$, which guarantees $H(1)=1$. The latter is only a technical condition that makes calculations easier. This brings us to the last case, i.e., when
\[
H(s)= \dfrac{\log(\alpha s^{\delta}+1)}{\log(\alpha+1)}
\qquad
\left(s\in S\right),
\]
in other words,
\[
\eta(\lambda, s)= \dfrac{\log\left(\lambda^{\delta}\left(e^{\log(\alpha+1)s}-1\right)+1\right)}{\log(\alpha+1)}
\qquad
\left(\lambda \in J, s\in S\right).
\]
Note that in this case, we have
\[
H^{-1}(s)= \left(\frac{\exp(\log(\alpha+1)s)-1}{\alpha}\right)^{\frac{1}{\rho}}
\qquad
\left(s\in S\right).
\]
Given the argument presented at the beginning of this section,
\[
\xi_{s^{\ast}}\left(\frac{H^{-1}(s)}{H^{-1}(s^{\ast})} x\right)= \gamma\left( \frac{H^{-1}(s)}{H^{-1}(s^{\ast})}, s^{\ast}\right) \xi_{s}(x)
\qquad
\left(x\in I, s \in S\right)
\]
with some fixed $s^{\ast}\in S$. Assume now that there exists $\bar{s}\in S$ and a subinterval $\bar{S}\subset S$ such that $\gamma(\lambda, \bar{s})\neq 0$ for all
$\lambda\in J$. Then
\[
\xi_{s}(x)= \dfrac{\Phi(H^{-1}(s)x)}{g(H^{-1}(s))}
\qquad
\left(x\in I, s \in S\right)
\]
holds with some appropriate functions $\Phi$ and $g$.
The question of which one-parameter functions of this form satisfy a psychometric representation may also be answered in a future paper.

\section*{Declarations}

\begin{description}
\item[Funding] The research of E. Gselmann has been supported by project no. K134191
that has been implemented with the support provided by the National Research,
Development and Innovation Fund of Hungary, financed under the K\_20 funding
scheme. Y.-F. Hsu's work was supported in part by the Ministry of Science and Technology of Taiwan (R.O.C.) Grant MOST-111-2410-H-002-166.
\item[Conflict of interest/Competing interests] The authors have no conflicts of interest
to declare that are relevant to the content of this article.
\item[Ethics approval and consent to participate] Not applicable.
\item[Consent for publication] We hereby provide consent for the publication of the
manuscript detailed above, including any accompanying images or data contained
within the manuscript that may directly or indirectly disclose our identity.
\item[Declaration of generative AI] We declare that have not used any AI tools or technologies to prepare this manuscript. 
\item[Data availability] Not applicable.
\item[Materials availability] Not applicable.
\item[Code availability] Not applicable.
\item[Author contribution] The authors contributed equally.
\end{description}

%

\end{document}